\newtheorem{theorem}{Theorem}
\newtheorem{definition}[theorem]{Definition}
\newtheorem{lemma}[theorem]{Lemma}
\newtheorem{proposition}[theorem]{Proposition}
\newenvironment{proof}[1][Proof]{\textit{#1.} }{\ \rule{0.5em}{0.5em}}
\begin{document}

\title{Asymptotic description of solutions of the exterior Navier Stokes
problem in a half space}
\author{Matthieu Hillairet \\
{\small  CEREMADE}\\
{\small Universit\'{e} Paris Dauphine, France}\\
{\small hillairet@ceremade.dauphine.fr} \and Peter Wittwer\thanks{%
Supported in part by the Swiss National Science Foundation.} \\
{\small D\'{e}partement de Physique Th\'{e}orique}\\
{\small Universit\'{e} de Gen\`{e}ve, Switzerland}\\
{\small peter.wittwer@unige.ch}}
\date{\today }
\maketitle

\begin{abstract}
We consider the problem of a body moving within an incompressible fluid at
constant speed parallel to a wall, in an otherwise unbounded domain. This
situation is modeled by the incompressible Navier-Stokes equations in an
exterior domain in a half space, with appropriate boundary conditions on the
wall, the body, and at infinity. 
We focus on the case where the size of the body is small. We prove in a very general setup that  the solution of this problem
is unique and we compute a sharp decay rate of the solution far from the moving body and the 
wall.
\end{abstract}

\tableofcontents

\section{Introduction}

In the present paper we discuss solutions of the Navier-Stokes equations for
the stationary flow around a body that moves with constant speed parallel to
a wall in an otherwise unbounded space filled with a fluid. The mathematical
formulation of the problem is as follows. Let $\Omega _{+}=\{\mathbf{x}%
=(x,y)\in \mathbb{R}^{2}\left\vert {}\right. y>1\}$, and let $B_{t}=\{{(x,y)}%
\in \mathbb{R}^{2}\left\vert {}\right. {(x,y)}+t\mathbf{e}_{1}\in B\}$,
where $\mathbf{e}_{1}=(0,1)$ 
and where $B$ is a bounded open connected subset of $\Omega _{+}$ such that $%
\overline{B}\subset \Omega _{+}$. 
As a function of $t\geq 0$, the set $B_{t}$ corresponds to a body which is
immersed in a fluid and moves at constant speed from right to left parallel
to the wall $\partial \Omega _{+}$. The flow around this body is modeled by
the Navier-Stokes equations 
\begin{equation}
\left\{ 
\begin{array}{rcl}
\partial _{t}\mathbf{U} & = & -\left( \mathbf{U}\cdot \nabla \right) \mathbf{%
U}+\Delta \mathbf{U}-\nabla P~, \\ 
\nabla \cdot \mathbf{U} & = & 0~,%
\end{array}%
\right.  \label{EU}
\end{equation}%
in $\Omega _{t}=$ $\Omega _{+}\setminus \overline{B_{t}}$ with the boundary conditions
(the boundary $\partial \Omega _{+}$ is at rest and we choose no slip
boundary conditions at the surface of the body), 
\begin{equation}
\left. \mathbf{U}\right\vert _{\partial \Omega _{+}}=0~,\quad \lim\limits 
_{\substack{ {\mathbf{x}\,\in \,\Omega _{t}}  \\ {|\mathbf{x}|}\rightarrow
\infty }}\mathbf{U}({\mathbf{x}},t)=0~,\quad \left. \mathbf{U}\right\vert
_{\partial B_{t}}=-\mathbf{e}_{1}~.  \label{BU}
\end{equation}%
We are interested in the construction of solutions of equations (\ref{EU})-%
(\ref{BU}) that are stationary when viewed in a reference frame attached to
the moving body. We therefore set 
\begin{equation*}
\mathbf{U}({\mathbf{x}},t)=\mathbf{u}(\mathbf{x}+t\mathbf{e}_{1})~,\quad P(%
\mathbf{x},t)=p(\mathbf{x}+t\mathbf{e}_{1})~,
\end{equation*}%
and get the following stationary problem:%
\begin{equation}
\left\{ 
\begin{array}{rcl}
-\left( \mathbf{u}\cdot \nabla \right) \mathbf{u}-\partial _{x}\mathbf{u}%
+\Delta \mathbf{u}-\nabla p & = & 0~, \\ 
\nabla \cdot \mathbf{u} & = & 0~,%
\end{array}%
\right.  \label{Problem21}
\end{equation}%
in $\Omega _{+}\setminus \overline{B}$, with the boundary conditions%
\begin{equation}
\left. \mathbf{u}\right\vert _{\partial B}=-\mathbf{e}_{1}~,\quad \left. 
\mathbf{u}\right\vert _{\partial \Omega _{+}}=0~,\quad \lim\limits 
_{\substack{ \mathbf{x}\,\in \,\Omega _{+}  \\ |\mathbf{x}|\rightarrow
\infty }}\mathbf{u}(\mathbf{x})=0~.  \label{BC21}
\end{equation}%

\noindent Note that we have set without restriction of generality all the
physical constants and the speed of the moving body equal to one. This can
always be achieved by an appropriate scaling. With this choice of
normalization the Reynolds number of the moving body corresponds to the
diameter $\varepsilon $ of $B$. The problem contains a second length-scale,
which is the distance $h$ of (the center of) $B$ from the wall $\partial
\Omega _{+}$. In this paper, we are interested in the regime where $%
\varepsilon $ is small, 
and in particular small
with respect to $h$.

The system \eqref{Problem21} with boundary conditions \eqref{BC21} is 
related to
the so-called exterior Navier-Stokes problem: 
\begin{equation}
\left\{ 
\begin{array}{rcl}
-\lambda \left( (\mathbf{u}-\mathbf{u}_{\infty })\cdot \nabla \right) 
\mathbf{u}+\Delta \mathbf{u}-\nabla p & = & 0~, \\ 
\nabla \cdot \mathbf{u} & = & 0~,%
\end{array}%
\right. \quad \text{ in $\mathbb{R}^{n}\setminus \overline{B}$}
\label{ExteriorNS}
\end{equation}%
\begin{equation}
\left. \mathbf{u}\right\vert _{\partial B}=\mathbf{u}^{\ast }~,\quad
\lim\limits_{|\mathbf{x}|\rightarrow \infty }\mathbf{u}(\mathbf{x})=0~.
\label{BCE}
\end{equation}%
where $B$ is a bounded open connected subset of $\mathbb{R}^{n}$ with smooth
boundary, $\lambda \in \mathbb{R}$ is the Reynolds number, $\mathbf{u}%
_{\infty }\in \mathbb{R}^{n}$ is a prescribed asymptotic velocity and $%
\mathbf{u}^{\ast }\in H^{1/2}(\partial B)$ is a given boundary condition.
Most of the methods for solving this problem are extensively described in
the fundamental book of G.P. Galdi \cite{Galdi98}. We give a brief 
outline of some results in the following lines.

\medskip

The first methods  to solve such exterior problems 
go back to the pioneering work of J. Leray \cite{Leray33}. In this reference,
the author introduces an \emph{invading domain} method yielding existence of
at least one \emph{weak solution} to \eqref{ExteriorNS}-\eqref{BCE} whose
velocity-field $\mathbf{u}$ satisfies $\Vert \nabla \mathbf{u} \: ; \: L^{2}(\mathbb{%
R}^{n}\setminus \overline{B})\Vert <\infty .$ A comparable result is
obtained by H. Fujita \cite{Fujita61}. Similar weak solutions are
constructed also for exterior Navier Stokes system \eqref{ExteriorNS} with
other types of boundary conditions on $\partial B$ (see \cite{Serre87} and 
\cite{Weinberger73}). The 
only shortcoming of these weak solutions is that insufficient information is
obtained on the behavior at infinity. In the case $n=2$ with $\mathbf{u}%
_{\infty }=0,$ it is still not known whether the vanishing condition at infinity is satisfied by weak solutions or not
(see \cite{Amick91,Gilbarg&Weinberger78} and \cite{Russo10} for recent
developments in this question). This difficulty is linked to the famous Stokes Paradox which holds in two space-dimensions. 
For the geometry of the present paper, existence of weak solutions for \eqref{Problem21} decaying at infinity, combined with other boundary conditions, 
is studied in \cite{Hillairet07}.

\medskip

In the case $\mathbf{u}_{\infty }\neq 0,$ a more refined description of the
asymptotic behavior of solutions to \eqref{ExteriorNS}-\eqref{BCE} is given in a
second series of papers. These results rely on the idea that the dominating
system at infinity is the Oseen system : 
\begin{equation}
\left\{ 
\begin{array}{rcl}
\lambda \mathbf{u}_{\infty }\cdot \nabla \mathbf{u}+\Delta \mathbf{u}-\nabla
p & = & 0~, \\ 
\nabla \cdot \mathbf{u} & = & 0~.%
\end{array}%
\right.  \label{Oseen}
\end{equation}%
A detailed comprehension of the asymptotics of solutions to this linear
system enables to construct solution to \eqref{ExteriorNS}-\eqref{BCE} via a
standard perturbation technique and then to compute the asymptotics of the
constructed solutions. Such an analysis is performed by K.I. Babenko in the
3D-setting \cite{Babenko75}, and by R. Finn and D.R. Smith \cite%
{Finn&Smith67} and L.I. Sazonov \cite{Sazonov03} in the 2D-setting. This
method is transposed to the geometry studied in the present paper by T.
Fischer, G.C. Hsiao and W.L. Wendland in \cite{Fischer&Hsiao&Wendland86}. 
In this last case  the difficulty linked to the Stokes paradox is less limitative. 
In particular, the Stokes problem : 
\begin{equation}
\left\{ 
\begin{array}{rcl}
\Delta \mathbf{u}-\nabla p & = & 0~, \\ 
\nabla \cdot \mathbf{u} & = & 0~,%
\end{array}%
\right. \quad \text{in $\Omega _{+}\setminus \overline{B}.$}
\end{equation}%
\begin{equation}
\left. \mathbf{u}\right\vert _{\partial B}=\mathbf{u}^{\ast }~,\quad
\left. \mathbf{u}\right\vert _{\partial \Omega _{+}}=0~,\quad 
\lim\limits_{|\mathbf{x}|\rightarrow \infty }\mathbf{u}(\mathbf{x})=0~,
\label{BCS}
\end{equation}%
is well-posed. In \cite{Fischer&Hsiao&Wendland86} existence of solutions to %
\eqref{Problem21}-\eqref{BC21} is obtained via a perturbation method based on this
linear Stokes problem. Nevertheless, the dominating system at
infinity in our case is still the Oseen system with $\mathbf{u}_{\infty }=%
\mathbf{e}_{1}$ so that no precise asymptotics of the constructed solutions
is given in \cite{Fischer&Hsiao&Wendland86}.  
This computation requires a very careful analysis of the Oseen linear system in the half space,
the analysis of which is not yet available with these former methods.
For completness, we mention here that the properties of the Stokes system in the geometry of the present paper is
studied in the more general framework of weighted Sobolev spaces in \cite%
{Amrouche&Bonzom09}. No equivalent study for the Oseen system is provided to
our knowledge.

\medskip

The present paper 
uses a dynamical-system approach for studying the asymptotics of solutions to 
an exterior Navier Stokes problem. In this 
method, the first idea is to interpret one coordinate as a time. Then, one
rewrites \eqref{Problem21} as a system of nonlinear evolution equations.
Solutions are constructed via a perturbation method in function spaces
enabling to compute the exact long-time behavior. In return, one obtains
solutions to \eqref{Problem21}-\eqref{BC21} with detailed asymptotics. This
program is applied successfully 
to the case of the 3D exterior Navier-Stokes system in \cite{Wittwer06} and of
the 2D half-space problem, 
with the solid $B$ replaced by a smooth source term with compact support, in a
previous publication of the authors \cite{Hillairet&Wittwer09}. In this last
reference, the solutions to the system of nonlinear evolution equations are
computed performing a Fourier transform in the transversal direction (\emph{%
i.e.}, with respect to $x$ in our case). This is the reason why we replace
the obstacle by a source term with compact support in \cite%
{Hillairet&Wittwer09}.

\medskip

In the present paper, we prove existence of solutions to \eqref{Problem21}-\eqref%
{BC21} with a detailed asymptotics  by combining the invading method 
of Leray and the dynamical-system approach. 
Since we apply  in part perturbation methods, our 
results hold only for small Reynolds numbers. 
The role of Reynolds number is played by the diameter of the solid $B$ in our setting. More precisely, 
let $S$ be a bounded open subset of $\mathbb{R}^{2}$ containing the origin, with a smooth boundary, 
and let $h$ be a positive parameter which fixes the center of the body with respect to the boundary. Then,
we set $S_{\varepsilon }:=(0,1+h)+\varepsilon S$ and rewrite our system 
as : 
\begin{equation}
\left\{ 
\begin{array}{rcl}
-\left( \mathbf{u}\cdot \nabla \right) \mathbf{u}-\partial _{x}\mathbf{u}%
+\Delta \mathbf{u}-\nabla p & = & 0~, \\ 
\nabla \cdot \mathbf{u} & = & 0~,%
\end{array}%
\right.  \label{Problem2}
\end{equation}%
in $\Omega \setminus \overline{S}_{\varepsilon }$, with the boundary
conditions%
\begin{equation}
\left. \mathbf{u}\right\vert _{\partial S_{\varepsilon }}=-\mathbf{e}%
_{1}~,\quad \left. \mathbf{u}\right\vert _{\partial \Omega _{+}}=0~,\quad
\lim\limits_{\substack{ \mathbf{x}\,\in \,\Omega _{+}  \\ |\mathbf{x}%
|\rightarrow \infty }}\mathbf{u}(\mathbf{x})=0~.  \label{BC2}
\end{equation}%
In what follows \eqref{Problem2} together with boundary conditions \eqref{BC2}
is referred to as {Problem 1}. The following theorem is our main result. 

\begin{theorem}
\label{maintheorem} For $\varepsilon$ sufficiently small, there exists a
unique weak solution $\mathbf{u}$ of Problem$~$1. Furthermore, there exists
a constant $C_{\varepsilon }<\infty $ such that, for all $(x,y)\in \Omega_+
\setminus \overline{S_{\varepsilon}}$,%
\begin{equation}
|\mathbf{u}(x,y)|\leq \dfrac{C_{\varepsilon }}{y^{\frac{3}{2}}}~.
\label{decay}
\end{equation}
\end{theorem}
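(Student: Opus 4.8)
The plan is to produce the solution by marrying Leray's invading-domain construction with a dynamical-system analysis of the far field, and to read the bound \eqref{decay} off the latter. First I would settle existence. Fixing $\varepsilon$, exhaust the fluid domain $\Omega_+\setminus\overline{S_\varepsilon}$ by the bounded domains $\Omega_n=(\Omega_+\setminus\overline{S_\varepsilon})\cap B_n$, where $B_n$ is the open disk of radius $n$ centred at the origin. On each $\Omega_n$ solve \eqref{Problem2} with the data \eqref{BC2} on $\partial S_\varepsilon\cup\partial\Omega_+$ and $\mathbf{u}=0$ on the artificial boundary $\partial B_n$; since the prescribed boundary velocity has vanishing flux it extends to a solenoidal field supported near $\partial S_\varepsilon$, and a standard Galerkin/Leray--Hopf argument gives a weak solution together with a Dirichlet estimate $\Vert\nabla\mathbf{u}_n\:;\:L^2(\Omega_n)\Vert\le C$ that is uniform in $n$ (the constant depends on $\varepsilon$, and in fact tends to $0$ as $\varepsilon\to0$ because of the logarithmic smallness of the capacity of the small body in two dimensions). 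Extracting a weakly convergent subsequence yields a weak solution $\mathbf{u}$ of Problem~1.

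For the decay I would pass to the far field. Interior elliptic regularity makes the weak solution smooth away from the boundaries, so pointwise bounds are meaningful; there \eqref{Problem2} is a perturbation of the Oseen system \eqref{Oseen} with $\mathbf{u}_\infty=\mathbf{e}_1$. Following the dynamical-system philosophy, I interpret $y$ as an evolution variable and Fourier transform in the transversal variable $x$; writing $k$ for the dual variable, the linearised operator reduces to a $y$-ODE whose two decaying fundamental solutions behave like $e^{-|k|y}$ (the potential, incompressibility-driven branch) and $e^{-\sqrt{k^2+ik}\,y}$ (the genuine Oseen branch). Imposing the no-slip condition $\hat{\mathbf{u}}(k,1)=0$ at the wall together with decay as $y\to\infty$ fixes the two amplitudes; the crucial point is that this matching expresses the amplitude of the slowly-decaying potential branch through the Oseen root $\sqrt{k^2+ik}\sim\sqrt{|k|}$, so that it vanishes like $\sqrt{|k|}$ as $k\to0$. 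Because $\int_0^\infty k^{1/2}e^{-ky}\,dk\sim y^{-3/2}$, the inverse Fourier transform of this low-frequency part decays at exactly the rate \eqref{decay}, while the vanishing of the net horizontal flux --- which holds since $\int_{\partial S_\varepsilon}\mathbf{u}\cdot\mathbf{n}=0$ forces $\hat v(0,y)\equiv0$ --- excludes any slower $y^{-1}$ term. Estimating these modes in a norm weighted by $y^{3/2}$ and closing a fixed-point argument in which the quadratic term $(\mathbf{u}\cdot\nabla)\mathbf{u}$ is absorbed using the smallness of $\varepsilon$ then produces a solution obeying \eqref{decay}.

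Uniqueness for small $\varepsilon$ I would get from an energy estimate. If $\mathbf{u}_1,\mathbf{u}_2$ are two weak solutions, their difference $\mathbf{w}=\mathbf{u}_1-\mathbf{u}_2$ satisfies the homogeneous Oseen problem with the extra term $(\mathbf{w}\cdot\nabla)\mathbf{u}_1+(\mathbf{u}_2\cdot\nabla)\mathbf{w}$; testing with $\mathbf{w}$, the transport and pressure terms and the term $(\mathbf{u}_2\cdot\nabla)\mathbf{w}\cdot\mathbf{w}$ integrate to zero, leaving $\Vert\nabla\mathbf{w}\Vert^2=\int(\mathbf{w}\cdot\nabla)\mathbf{w}\cdot\mathbf{u}_1$. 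A Hardy inequality (legitimate since $\mathbf{w}$ vanishes on $\partial\Omega_+$) bounds the right-hand side by $C\,\Vert\nabla\mathbf{u}_1\Vert\,\Vert\nabla\mathbf{w}\Vert^2$, and since $\Vert\nabla\mathbf{u}_1\Vert$ is small for small $\varepsilon$ this forces $\mathbf{w}=0$. In particular the Leray solution of the first paragraph coincides with the one built in the far-field analysis, so it inherits the bound \eqref{decay}.

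The hard part will be the half-space Oseen analysis of the second paragraph --- exactly the computation the introduction flags as not yet available with the earlier methods. One must control how the wall boundary condition couples the potential and Oseen branches across all frequencies, prove bounds on the resulting solution operator that are uniform in $k$, and establish the delicate low-frequency cancellation that promotes the naive $y^{-1}$ decay to the sharp $y^{-3/2}$ of \eqref{decay}. A secondary difficulty is to match the inner region, where the body is resolved and the relevant linear theory is the well-posed Stokes problem \eqref{BCS}, to the outer Oseen regime, in such a way that the global constant $C_\varepsilon$ comes out finite.
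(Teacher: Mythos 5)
Your proposal reproduces the paper's announced philosophy (Leray exhaustion for existence, a Fourier/dynamical-system analysis of the half-space Oseen problem for the $y^{-3/2}$ decay, an energy argument for uniqueness), but it is missing the two ideas that actually make the scheme close, and one of your estimates is not correct as stated.

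First, the bridge between the exterior problem and the Fourier analysis is absent. You cannot Fourier transform in $x$ in $\Omega_+\setminus\overline{S_\varepsilon}$, and your second paragraph implicitly analyses a half-space problem with no body in it; you defer the reconciliation to a ``secondary difficulty'' of matching inner and outer regions, but this matching is the main construction of the paper. The authors apply a cut-off to the \emph{stream function} of the weak solution to produce an exactly divergence-free field $\tilde{\mathbf u}=\mathbf{T}_v[\mathbf u]$ solving the whole-half-space system (Problem~2) with an explicit, compactly supported source term $\widetilde{\mathbf f}=TNS[\mathbf u,p]$; the previous paper's $\alpha$-solution theory is then applied to \emph{that} problem, and a weak--strong uniqueness theorem for Problem~2 (itself requiring the approximation of $\alpha$-solutions by compactly supported fields and continuity properties (H1)--(H4) of the trilinear form on $D$) identifies $\tilde{\mathbf u}$ with the decaying $\alpha$-solution outside a compact set. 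Second, the smallness that drives both the fixed point and the uniqueness is the smallness of $\Vert\mathbf u_\varepsilon;D\Vert$ as $\varepsilon\to0$, and your justification by ``logarithmic smallness of the capacity'' is precisely the intuition the paper's rescaling remark warns against: by the Stokes paradox the rescaled solutions do \emph{not} tend to zero, and the paper instead proves a uniform bound via a test-function/force estimate and then the convergence $\Vert\mathbf u_\varepsilon;D\Vert\to0$ by a compactness--contradiction argument showing the force $\mathbf\Sigma_\varepsilon\to0$.

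Finally, in your uniqueness paragraph the inequality $\left\vert\int(\mathbf w\cdot\nabla)\mathbf w\cdot\mathbf u_1\right\vert\le C\Vert\nabla\mathbf u_1\Vert\,\Vert\nabla\mathbf w\Vert^2$ does not follow from Hardy's inequality for $\mathbf w\in D$: Hardy gives $\Vert\mathbf w/y;L^2\Vert\le 2\Vert\mathbf w;D\Vert$, so one needs $\Vert y\,\mathbf u_1;L^\infty\Vert$ or $\Vert y^2\nabla\mathbf u_1;L^\infty\Vert$ finite to close the estimate, i.e.\ exactly the pointwise decay the theorem is trying to establish (and $\Vert\mathbf w;L^4\Vert$ is not controlled by $\Vert\nabla\mathbf w;L^2\Vert$ alone on an unbounded domain). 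This is why the paper proves the decay estimate \emph{first} and only then runs the uniqueness argument for Problem~1, using the established weighted bounds on $\mathbf u_\varepsilon$; as written, your uniqueness step is circular at this point.
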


A precise definition of weak solutions for Problem 1 is given in Section 1.
For the sake of simplicity we only give a bound for the decay of the weak
solution in \eqref{decay}. Nevertheless, a precise first order for the
asymptotics is available with our techniques. Such computations are
performed in an independent paper (see \cite{Christoph}). This bound is
the critical ingredient for proving uniqueness in the frame of weak
solutions to Problem 1.

\medskip

Our strategy to obtain detailed information on weak solutions of Problem~1
at infinity is divided in five steps. First, we show the existence of weak
solutions for Problem$~$1 by the invading method of Leray. Second, we use a
cut-off function to obtain, from a weak solution $(\mathbf{u},p)$ of Problem$%
~$1, a weak solution $(\mathbf{\tilde{u}},\tilde{p})$ to 
\begin{equation}
\begin{array}{c}
-\left( \mathbf{\tilde{u}}\cdot \nabla \right) \mathbf{\tilde{u}}-\partial
_{x}\mathbf{\tilde{u}}+\Delta \mathbf{\tilde{u}}-\nabla \tilde{p}=\mathbf{f}%
~, \\ 
\nabla \cdot \mathbf{\tilde{u}}=0~,%
\end{array}
\label{Problem1}
\end{equation}%
with the boundary conditions 
\begin{equation}
\left. \mathbf{\tilde{u}}\right\vert _{\partial \Omega _{+}}=0~,\quad
\lim\limits_{\substack{ \mathbf{x}\in \Omega _{+}  \\ \mathbf{x}\rightarrow
\infty }}\mathbf{\tilde{u}}(x,y)=0~,  \label{BC1}
\end{equation}%
The system \eqref{Problem1} together with boundary conditions \eqref{BC1} is
referred to as Problem 2 in what follows. Note that, in this new system we
keep the divergence-free condition: the cut-off function is applied to the
stream function of $\mathbf{u}.$ This enables to compute explicitly the
source term $\mathbf{f}.$ So, in the third step, we show that, for $%
\varepsilon $ small enough, the function $\mathbf{f}$ satisfies the
smallness condition formulated in our previous paper \cite%
{Hillairet&Wittwer09}, so that there exists at least one $\alpha$-solution $%
(\mathbf{u}_{\alpha },p_{\alpha })$ for Problem$~$2 (see Section 3 for the
definition of $\alpha $-solutions). In the fourth step, we prove a
weak-strong uniqueness result for Problem$~$2. Once again, this weak-strong
argument applies to weak-solutions and $\alpha $-solutions constructed for $%
\varepsilon $ small enough. The uniqueness of solutions for Problem$~$2 does
not directly imply the uniqueness of solutions for Problem$~$1, because
different solutions of Problem$~$1 may lead to different functions $\mathbf{f%
}$. So in a last step, we prove uniqueness of weak solutions for Problem$~$1
for $\varepsilon $ small enough.

\subsection{Sets and function spaces}

\label{sec_functionspaces} In the whole paper, we use the standard notations
for function spaces such as $L^{p}(\mathcal{O})$ for Lebesgue spaces and $%
W^{m,p}(\mathcal{O})$ or $H^{m}(\mathcal{O})$ for Sobolev spaces. We denote
by $\mathcal{C}^{m}(\mathcal{O})$ the spaces of continuous functions having $%
m$ continuous derivative ($m$ might be infinite). We use the subscript $c$
to specify that function have compact support in the set $\mathcal{O}.$
Given a Banach space $X$ and $p\in X$, the norm of $p$ in $X$ is denoted by $%
\Vert p;X\Vert $ and,  { if $X$ is a function space containing the constants:}  
\begin{equation}
\Vert p;{X/\mathbb{R}}\Vert :=\inf \{\Vert p+c;X\Vert ,~c\in \mathbb{R}\}~.
\label{normmod}
\end{equation}%
This latter notation is very useful for pressures which are defined up to an
additive constant in systems such as \eqref{Problem21}.

\medskip

In some proofs, we shall need a smooth covering of $\Omega _{+}$ or of $%
\Omega _{+}\setminus  \{(0,1+h)\}$. For this purpose, we introduce here some particular subsets of $\Omega
_{+}.$ First, we denote ${B(}\lambda )$ the open balls with center $(0,1+h)$ \emph{%
i.e.} given $\lambda >0,$ we denote by 
\begin{equation*}
B(\lambda )=\Big\{(x,y)\in \Omega _{+}\text{ such that }{}%
|(x,y)-(0,1+h)|<\lambda \Big\}~.
\end{equation*}%
We note in particular, that, since $S$ is bounded, there exists $\varepsilon _{0}>0$%
, such that $S_{\varepsilon }\subset {B(}h/3)$ for $\varepsilon <\varepsilon
_{0}.$ We keep the classical convention $B((x,y),r)$ for balls with center $(x,y) \in \mathbb R^2$
and radius $r >0.$
We introduce $(\Delta _{n})_{n\in \mathbb{N}}$ an increasing covering
of $\Omega _{+}$ such that, for all $n\in \mathbb{N}$ :

\begin{itemize}
\item $\Delta_n$ has a smooth boundary

\item $B(\,(1+n)h\,)\subset \Delta _{n}\subset B(\,(2+n)h\,){.}$
\end{itemize}
Furthermore we define, for $n\in \mathbb{N}$, the sets $\mathcal{A}_{n}$ by 
$\mathcal{A}_{n}=\Delta _{n}\setminus \overline{{B(}2^{-n}h)}.$ Therefore, for all $n\in \mathbb{N},$ $\mathcal{A}_{n}$ has a smooth
boundary.

\section{\label{Section 2}Weak solutions for Problem$~$1}

In this section, we consider the theory of weak solutions for Problem$~$1. %
The main result of this section is the following theorem. 

{
\begin{theorem}
\label{thm_cvg0main} There exists a family $(\mathbf{u}_{\varepsilon})_{\varepsilon>0}$  which is defined for $%
\varepsilon$ sufficiently small and such that:

\begin{enumerate}
\item[(i)] for all $\varepsilon>0,$ $\mathbf{u}_{\varepsilon}$ is a weak solution of Problem 1 for $S_{\varepsilon},$

\item[(ii)] given $\eta >0$ there exists $0<\varepsilon _{\eta }$ such that,
for all $\varepsilon <\varepsilon _{\eta }$ there holds $\Vert \mathbf{u}%
_{\varepsilon };D\Vert \leq \eta $,

\item[(iii)] there exists a pressure $p_{\varepsilon }$ such that $(\mathbf{u}%
_{\varepsilon },p_{\varepsilon })$ satisfies \eqref{Problem1} in $\Omega
_{+}\setminus \overline{S}_{\varepsilon }$ and, given $m\in \mathbb{N}$,
there holds: 
\begin{equation}
\Vert \mathbf{u}_{\varepsilon };{\mathcal{C}^{m+1}(}\overline{{ \mathcal{A}%
_{2}}}{)}\Vert +\Vert p_{\varepsilon };{\mathcal{C}^{m}(}\overline{{ \mathcal{%
A}_{2}}}{)/\mathbb{R}}\Vert \leq C_{m}\Vert \mathbf{u}_{\varepsilon };D\Vert
~.
\end{equation}%
for some universal constant $C_{m}$ depending only on $m$.
\end{enumerate}
\end{theorem}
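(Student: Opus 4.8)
The plan is to produce the family $(\mathbf{u}_\varepsilon)$ by Leray's invading-domains method, organised so that all three assertions flow from a single a priori estimate: (i) is the existence obtained in the limit, (ii) is the smallness of the right-hand side of that estimate as $\varepsilon\to0$, and (iii) is interior elliptic regularity on the fixed set $\overline{\mathcal{A}_2}$, away from the shrinking body.

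First I would lift the boundary datum $-\mathbf{e}_1$ by a divergence-free field of small Dirichlet norm. Since $-\mathbf{e}_1=\nabla^{\perp}\psi_0$ with the affine stream function $\psi_0(x,y)=-(y-(1+h))$, I fix a cut-off $\chi$ equal to $1$ on a neighbourhood of $\overline{S_\varepsilon}$ and supported in $B(h/2)$, and set $\mathbf{b}_\varepsilon=\nabla^{\perp}(\chi\psi_0)$. Then $\mathbf{b}_\varepsilon$ is divergence-free, equals $-\mathbf{e}_1$ on $\partial S_\varepsilon$ (where $\chi\equiv1$ so $\nabla\chi=0$), vanishes near $\partial\Omega_+$ and outside $B(h/2)$, and --- the decisive point --- if the transition region of $\chi$ is taken to shrink with the body, then $\psi_0=O(\varepsilon)$ there and one obtains $\Vert\mathbf{b}_\varepsilon;D\Vert\to0$ as $\varepsilon\to0$. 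That a single-valued $\psi$ exists at all reflects the vanishing net flux of the constant field $-\mathbf{e}_1$ through the closed curve $\partial S_\varepsilon$.

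Next I would solve on an exhausting sequence of bounded domains $\Omega_\varepsilon^k=(\Omega_+\setminus\overline{S_\varepsilon})\cap B(k)$, seeking $\mathbf{u}^k=\mathbf{b}_\varepsilon+\mathbf{w}^k$ with $\mathbf{w}^k$ divergence-free and of zero trace on $\partial\Omega_\varepsilon^k$; on each bounded domain a Galerkin (or Leray--Schauder) argument yields such a $\mathbf{w}^k$. Testing with $\mathbf{w}^k$ and using that $\int(\mathbf{a}\cdot\nabla)\mathbf{w}^k\cdot\mathbf{w}^k=0$ and $\int\partial_x\mathbf{w}^k\cdot\mathbf{w}^k=0$ for divergence-free $\mathbf{a}$ and $\mathbf{w}^k\in H^1_0$, every term that is cubic in $\mathbf{w}^k$ cancels and the quadratic terms either cancel or carry a factor of $\mathbf{b}_\varepsilon$. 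The only surviving term quadratic in $\mathbf{w}^k$ is $\int(\mathbf{w}^k\cdot\nabla)\mathbf{b}_\varepsilon\cdot\mathbf{w}^k$; localising to the support of $\mathbf{b}_\varepsilon$ and combining the two-dimensional Ladyzhenskaya inequality with Poincar\'e (legitimate since $\mathbf{w}^k$ vanishes on $\partial S_\varepsilon$) bounds it by $C\,\Vert\mathbf{b}_\varepsilon;D\Vert\,\Vert\mathbf{w}^k;D\Vert^2$, hence absorbable into the left-hand side for $\varepsilon$ small. The remaining terms are linear in $\mathbf{w}^k$ with coefficients tending to $0$ with $\mathbf{b}_\varepsilon$, so Young's inequality gives $\Vert\mathbf{w}^k;D\Vert\le C(\varepsilon)$ with $C(\varepsilon)\to0$, uniformly in $k$. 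Extracting a weak limit in $D$, strong in $L^2_{\mathrm{loc}}$ by Rellich, I pass to the limit in the nonlinearity on each compact set and obtain a weak solution $\mathbf{u}_\varepsilon=\mathbf{b}_\varepsilon+\mathbf{w}$ of Problem~1, which is (i); the bound $\Vert\mathbf{u}_\varepsilon;D\Vert\le\Vert\mathbf{b}_\varepsilon;D\Vert+C(\varepsilon)\to0$ gives (ii).

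Finally, for (iii) I would recover the pressure $p_\varepsilon$ by de Rham's theorem and invoke elliptic regularity on the fixed set $\overline{\mathcal{A}_2}$. For $\varepsilon$ small enough $\overline{S_\varepsilon}\subset B(2^{-2}h)$, so $\overline{\mathcal{A}_2}$ lies in the fluid region at positive distance from the body and $(\mathbf{u}_\varepsilon,p_\varepsilon)$ solves the source-free system \eqref{Problem2} on a neighbourhood of $\overline{\mathcal{A}_2}$, with the no-slip condition on the smooth wall portion of $\partial\mathcal{A}_2$. Since $\Vert\mathbf{u}_\varepsilon;D\Vert$ is small, the convective term is a small perturbation and a standard interior-and-boundary Stokes/Oseen bootstrap on this fixed geometry gives $\mathcal{C}^{m+1}$ control of $\mathbf{u}_\varepsilon$ and $\mathcal{C}^m$ control of $p_\varepsilon$ modulo constants by the $H^1$ norm on a slightly larger set, itself dominated by $\Vert\mathbf{u}_\varepsilon;D\Vert$; the constant $C_m$ depends only on $m$ because the geometry of $\mathcal{A}_2$ is independent of $\varepsilon$. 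The main obstacle is closing the energy estimate uniformly in $k$, namely controlling $\int(\mathbf{w}^k\cdot\nabla)\mathbf{b}_\varepsilon\cdot\mathbf{w}^k$: this is exactly where the two-dimensional difficulty tied to the Stokes paradox would enter, and it is defused here only by the smallness of $\Vert\mathbf{b}_\varepsilon;D\Vert$ afforded by the small-body scaling.
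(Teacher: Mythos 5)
Your overall architecture (Leray invading domains, one a priori estimate, elliptic bootstrap on the fixed set $\overline{\mathcal{A}_2}$ for (iii)) matches the paper's, and your treatment of (i) and of (iii) is sound in outline. The gap is in the step on which assertion (ii) rests: the claim that the lifting $\mathbf{b}_\varepsilon=\nabla^{\perp}(\chi\psi_0)$, with the transition region of $\chi$ shrinking with the body, satisfies $\Vert\mathbf{b}_\varepsilon;D\Vert\to0$. This is false. On a transition annulus of radius and width of order $\varepsilon$ one has $|\nabla\chi|\sim\varepsilon^{-1}$ and $|\nabla^{2}\chi|\sim\varepsilon^{-2}$, so the terms $\nabla\chi\otimes\nabla^{\perp}\psi_0$ and $\psi_0\,\nabla\nabla^{\perp}\chi$ in $\nabla\mathbf{b}_\varepsilon$ are of size $\varepsilon^{-1}$ on a set of area of order $\varepsilon^{2}$, and $\Vert\nabla\mathbf{b}_\varepsilon;L^{2}\Vert$ stays of order one: the Dirichlet integral is scale invariant in two dimensions, so shrinking the body homothetically cannot shrink the $D$-norm of an extension of the constant boundary datum supported at the scale of the body. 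The observation that $\psi_0=O(\varepsilon)$ on the transition region controls $\mathbf{b}_\varepsilon$ itself (its $L^{2}$ and $L^{4}$ norms do vanish), not its gradient. This is precisely the Stokes-paradox obstruction discussed in the Remark following Lemma \ref{lem_bootstrap}: in the rescaled variables the solutions converge to $-\mathbf{e}_1$, not to $0$. With your $\mathbf{b}_\varepsilon$ the energy estimate closes only to a uniform bound $\Vert\mathbf{w}^{k};D\Vert\leq C$, because the linear term $\int\nabla\mathbf{b}_\varepsilon:\nabla\mathbf{w}^{k}$ contributes $O(1)\Vert\mathbf{w}^{k};D\Vert$; you recover (i) but not (ii).

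Two remarks on repairs. The paper does not attempt to make the lifting small: it first proves only a uniform bound (Proposition \ref{both}), estimating the force $\mathbf{\Sigma}$ with a test function concentrated at scale $\varepsilon$ and a Poincar\'e constant scaling like $\varepsilon$, and then obtains (ii) by a separate compactness and contradiction argument (Lemma \ref{lem_cvg0aux}) showing that the limiting force vanishes, whence $\Vert\mathbf{u}_{i};D\Vert^{2}\leq\mathbf{\Sigma}_{i}\cdot\mathbf{e}_1\to0$. Alternatively, your route can be salvaged by replacing the $\varepsilon$-scale cut-off with a logarithmically stretched one, $\chi(r)=\log(R/r)/\log(R/(\lambda_0\varepsilon))$ for $\lambda_0\varepsilon\leq r\leq R$ with $R<h$ fixed: then $|\nabla\chi|\sim\left(r\log(1/\varepsilon)\right)^{-1}$ and $|\psi_0\nabla^{2}\chi|\sim\left(r\log(1/\varepsilon)\right)^{-1}$, which gives $\Vert\mathbf{b}_\varepsilon;D\Vert\sim\left(\log(1/\varepsilon)\right)^{-1/2}\to0$ (the vanishing $H^{1}$-capacity of a point in the plane) and would even yield a quantitative rate in (ii). Finally, a smaller point: the local $L^{4}$ control of $\mathbf{w}^{k}$ on the support of $\mathbf{b}_\varepsilon$ should come from Hardy's inequality relative to the wall $y=1$ (Proposition \ref{prop_Hardy}), where $\mathbf{w}^{k}$ also vanishes, rather than from Poincar\'e relative to $\partial S_\varepsilon$: the Poincar\'e constant of a fixed ball with an $\varepsilon$-size hole removed degenerates like $\log(1/\varepsilon)$ in two dimensions.
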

}

We refer the reader to the introduction for the definition of { $\mathcal{A}%
_{2}$}. We introduce function spaces and the definition of weak solutions for
Problem~1 just below.

\medskip

{The proof of this result is divided in three steps. First, we recall the
method of Leray for the construction of weak solutions. We obtain in this
way a family of weak solutions which satisfy a particular uniform bound with
respect to the (small) size of the obstacle. Eventually, we prove that this
family of solutions tends to $0$ in the sense of }{\bf Theorem \ref{thm_cvg0main}}.

\subsection{Definition of weak solutions}

{To begin with, the size $\varepsilon $ of the obstacle is fixed such that $%
{S}_{\varepsilon }\subset B(h/3).$ } Let $(\mathbf{u},p)$ be a
smooth solution of Problem$~$1 for $S_{\varepsilon }$. We extend $\mathbf{u}$
{from} $\Omega _{+}\setminus \overline{S_{\varepsilon }}$ to the whole of $%
\Omega _{+}$ by setting $\mathbf{u}=-\mathbf{e}_{1}$ on $\overline{%
S_{\varepsilon }}$. Let $\mathbf{w}$ be a smooth divergence-free
vector-field with compact support in $\Omega _{+}$ which is equal to a given
constant vector field $\mathbf{W}$ on $S_{\varepsilon }$. Then, if we
multiply equation (\ref{Problem2}) by $\mathbf{w}$ and integrate over $%
\Omega _{+}\setminus \overline{S_{\varepsilon }}$ we get 
\begin{equation}
\int_{\Omega _{+}\setminus \overline{S_{\varepsilon }}}(\Delta \mathbf{u}%
-\nabla p)\cdot \mathbf{w}~d\mathbf{x}=\int_{\Omega _{+}\setminus \overline{%
S_{\varepsilon }}}\left[ (\mathbf{u}+\mathbf{e}_{1})\cdot \nabla \mathbf{u}%
\right] \cdot \mathbf{w}~d\mathbf{x}~.  \label{w1}
\end{equation}%
In order to unburden notations we have suppressed in (\ref{w1}), and in
what follows, the arguments of functions when no confusion is possible.
Applying Green's identity to the left-hand side of (\ref{w1}) leads to the
equality 
\begin{equation*}
\int_{\Omega _{+}\setminus \overline{S_{\varepsilon }}}(\Delta \mathbf{u}%
-\nabla p)\cdot \mathbf{w}~d\mathbf{x}=\int_{\partial (\Omega _{+}\setminus 
\overline{S_{\varepsilon }})}T(\mathbf{u},p)\mathbf{n}\cdot \mathbf{w}~d%
\text{$\sigma $}-\frac{1}{2}\int_{\Omega _{+}\setminus \overline{%
S_{\varepsilon }}}\left( \nabla \mathbf{u+}\left[ \nabla \mathbf{u}\right]
^{\top }\right) :\left( \nabla \mathbf{w+}\left[ \nabla \mathbf{w}\right]
^{\top }\right) ~d\mathbf{x}~,
\end{equation*}%
where $T(\mathbf{u},p)=(\nabla \mathbf{u}+[\nabla \mathbf{u}]^{\top })-pI$
and where $\mathbf{n}$ is the outward normal on $\partial (\Omega
_{+}\setminus \overline{S_{\varepsilon }})$. Using the boundary conditions
for $\mathbf{u}$, which imply in particular that $\nabla \mathbf{u}$
vanishes on $S_{\varepsilon }$, and using that $\mathbf{w}=\mathbf{W}$ on $%
S_{\varepsilon }$, we obtain that $\mathbf{u}$ satisfies%
\begin{equation}
\int_{\Omega _{+}}\nabla \mathbf{u}:\nabla \mathbf{w}~d\mathbf{x}%
+\int_{\Omega _{+}}\left[ (\mathbf{u}+\mathbf{e}_{1})\cdot \nabla \mathbf{u}%
\right] \cdot \mathbf{w}~d\mathbf{x}=-\mathbf{\Sigma }\cdot \mathbf{W}~,
\label{WF1}
\end{equation}%
with the vector 
\begin{equation}
\mathbf{\Sigma }=-\int_{\partial S_{\varepsilon }}T(\mathbf{u},p)\mathbf{n}~d%
\text{$\sigma $}~.  \label{Signa}
\end{equation}%
The vector $\mathbf{\Sigma }$ is the force which the fluid exerts on $%
S_{\varepsilon }$. If we replace, on a formal level, $\mathbf{w}$ by $%
\mathbf{u}$ in (\ref{WF1}), we obtain, as $\mathbf{W=-e}_{1}$ in this
case :
\begin{equation}
\int_{\Omega _{+}}|\nabla \mathbf{u}|^{2}~d\mathbf{x}+\int_{\Omega _{+}}%
\left[ (\mathbf{u}+\mathbf{e}_{1})\cdot \nabla \mathbf{u}\right] \cdot 
\mathbf{u}~d\mathbf{x}=\mathbf{\Sigma }\cdot \mathbf{e}_{1}~.  \label{12}
\end{equation}%
Integrating by parts yields, as $\mathbf u$ is divergence-free: %
\begin{equation}
\int_{\Omega _{+}}\left[ (\mathbf{u}+\mathbf{e}_{1})\cdot \nabla \mathbf{u}%
\right] \cdot \mathbf{u}~d\mathbf{x}=\dfrac{1}{2}\int_{\Omega _{+}}\left[ 
\mathbf{u}\cdot \nabla |\mathbf{u}|^{2}+\partial _{x}|\mathbf{u}|^{2}\right]
~d\mathbf{x}=0~,  \label{13}
\end{equation}%
and therefore (\ref{Signa}) reduces to 
\begin{equation}
\int_{\Omega _{+}}|\nabla \mathbf{u}|^{2}~d\mathbf{x}=\mathbf{\Sigma }\cdot 
\mathbf{e}_{1}~.  \label{NRJ1}
\end{equation}%
We conclude that if $(\mathbf{u},p)$ is a 
solution of 
Problem$~$1 which decays sufficiently rapidly at infinity, then $\mathbf{u}$
satisfies the integral equation (\ref{WF1}) and we have the identity (\ref%
{NRJ1}), which means that $\nabla \mathbf{u}$ $\in L^{2}(\Omega _{+})$.

\medskip

The above discussion motivates the following functional setting for weak
solutions of Problem$~$1. Let $\mathcal{D}$ be the vector space of smooth
divergence-free vector-fields with compact support in $\Omega _{+}$. We
equip $\mathcal{D}$ with the scalar product 
\begin{equation}
((\mathbf{w}_{1},\mathbf{w}_{2}))=\int_{\Omega _{+}}\nabla \mathbf{w}%
_{1}:\nabla \mathbf{w}_{2}~d\mathbf{x}~.  \label{sp}
\end{equation}%
For functions in $\mathcal{D}$ we have%
\begin{equation}
\int_{\Omega _{+}}\nabla \mathbf{w}_{1}:\nabla \mathbf{w}_{2}~d\mathbf{x}=%
\frac{1}{2}\int_{\Omega _{+}}(\nabla \mathbf{w}_{1}+[\nabla \mathbf{w}%
_{1}]^{\top }):(\nabla \mathbf{w}_{2}+[\nabla \mathbf{w}_{2}]^{\top })~d%
\mathbf{x}~.
\end{equation}%
{Let $D$ be the Hilbert space with respect to the scalar product (\ref{sp})
obtained by completion of $\mathcal{D}.$ Let $\mathcal{D}^{\varepsilon
}\subset \mathcal{D}$ be the vector-fields $\mathbf{w}\in \mathcal{D}$ which
are constant on $S_{\varepsilon }$ and let $D^{\varepsilon }$ be the closure
of $\mathcal{D}^{\varepsilon }$ in $D$.} On $D^{\varepsilon }$ we define the
function $\Gamma $ by 
\begin{equation}
\begin{array}{rccl}
\Gamma : & D^{\varepsilon } & \longrightarrow & \mathbb{R}^{2} \\ 
& \mathbf{w} & \longmapsto & \mathbf{W=}\dfrac{1}{\left\vert S_{\varepsilon
}\right\vert }\displaystyle{\int_{S_{\varepsilon }}}\mathbf{w(x)}~d\mathbf{x}%
~.%
\end{array}
\label{Gamma}
\end{equation}%
{It} follows from Hardy's inequality {(see }Proposition \ref{prop_Hardy}{\ 
below)} that $\Gamma $ is bounded. For convenience later on we define, for all $%
{\mathbf{W}}\in \mathbb{R}^{2}$,%
\begin{equation}
\mathcal{D}_{\mathbf{W}}^{\varepsilon }=\{\mathbf{w}\in \mathcal{D}%
^{\varepsilon }\left\vert {}\right. \mathbf{w}_{|_{S_{\varepsilon }}}={%
\mathbf{W}}\}~,\quad D_{\mathbf{W}}^{\varepsilon }=\{\mathbf{w}\in
D^{\varepsilon }\left\vert {}\right. \mathbf{w}_{|_{S_{\varepsilon }}}={%
\mathbf{W}}\}~.
\end{equation}%
{Such spaces have been studied extensively in  \cite[Chapter III.5]{Galdi98}. In
particular, we emphasize that with our smoothness assumptions on ${\partial S}_{\varepsilon }$ we have that}
$\overline{\mathcal{D}_{\mathbf{W}}^{\varepsilon }}=D_{\mathbf{W}%
}^{\varepsilon }$.

\bigskip

\noindent {Following the work of Leray, we }now define weak solutions for
Problem$~$1:

\begin{definition}
\label{defweaksolution}A vector-field $\mathbf{u}$ is called a weak solution
of Problem$~$1, if

\begin{enumerate}
\item[(i)] $\mathbf{u}\in {D}_{-\mathbf{e}_{1}}^{\varepsilon }$,

\item[(ii)] there exists a vector $\mathbf{\Sigma }\in \mathbb{R}^{2}$, such
that for all $\mathbf{w}\in {\mathcal{D}^{\varepsilon }}$%
\begin{equation}
\int_{\Omega _{+}}\nabla \mathbf{u}:\nabla \mathbf{w}~d\mathbf{x}%
+\int_{\Omega _{+}}\left[ (\mathbf{u}+\mathbf{e}_{1})\cdot \nabla \mathbf{u}%
\right] \cdot \mathbf{w}~d\mathbf{x}=-\mathbf{\Sigma }\cdot \Gamma (\mathbf{w%
})~,  \label{eq_wf}
\end{equation}%
and 
\begin{equation}
\int_{\Omega _{+}}|\nabla \mathbf{u}|^{2}~d\mathbf{x}\leq {\mathbf{\Sigma }}%
\cdot \mathbf{e}_{1}~.  \label{eq_NRJ}
\end{equation}
\end{enumerate}
\end{definition}

The following {standard} lemma shows that weak solutions are well defined.

\begin{lemma}
\label{lem_trilinear}Let $(\mathbf{u},\mathbf{v})\in D^{2}$ and let $\mathbf{%
w}\in D$ with $\mathrm{Supp}(\mathbf{w})\subset \mathcal{O}\subset \subset
\Omega _{+}\footnote{%
We use the standard notation $A\subset \subset B$ to mean that the closure $%
\bar{A}$ is a compact subset of $B$.}$. Then,%
\begin{equation}
\int_{\Omega _{+}}\left[ (\mathbf{u}+\mathbf{e}_{1})\cdot \nabla \mathbf{v}%
\right] \cdot \mathbf{w}~d\mathbf{x}=-\int_{\Omega _{+}}\left[ (\mathbf{u}+%
\mathbf{e}_{1})\cdot \nabla \mathbf{w}\right] \cdot \mathbf{v}~d\mathbf{x}~,
\label{eq_trilinearsym}
\end{equation}%
and%
\begin{equation}\left\vert \displaystyle{\int_{\Omega _{+}}}\left[ (\mathbf{u}+\mathbf{e}%
_{1})\cdot \nabla \mathbf{v}\right] \cdot \mathbf{w}~d\mathbf{x}\right\vert 
\leq C(\mathcal{O})\left( \Vert \mathbf{u};L^{4}(\mathcal{O})\Vert ~\Vert 
\mathbf{v};{D}\Vert ~\Vert \mathbf{w};L^{4}(\mathcal{O})\Vert +\Vert \mathbf{%
v};D\Vert ~\Vert \mathbf{w};L^{2}(\mathcal{O})\Vert \right) ~.
\label{eq_trilinear}
\end{equation}
\end{lemma}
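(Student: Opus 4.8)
The plan is to prove the quantitative bound \eqref{eq_trilinear} first, since it shows that the trilinear integral is absolutely convergent and that the form $(\mathbf{u},\mathbf{v},\mathbf{w})\mapsto\int_{\Omega_+}[(\mathbf{u}+\mathbf{e}_1)\cdot\nabla\mathbf{v}]\cdot\mathbf{w}\,d\mathbf{x}$ is continuous; the antisymmetry \eqref{eq_trilinearsym} then follows by integration by parts for smooth fields together with a density argument. The two analytic ingredients are the elementary inequality $\|\nabla\mathbf{v};L^2(\mathcal{O})\|\le\|\mathbf{v};D\|$ and the two-dimensional Sobolev embedding $H^1(\mathcal{O})\hookrightarrow L^4(\mathcal{O})$. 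I will also use that on a bounded set $\mathcal{O}\subset\subset\Omega_+$ the coordinate $y$ is bounded away from $0$ and $\infty$, so Hardy's inequality (Proposition \ref{prop_Hardy}) upgrades to a local Poincaré inequality $\|\mathbf{v};L^2(\mathcal{O})\|\le C(\mathcal{O})\|\mathbf{v};D\|$; combined with the Sobolev embedding this gives $\|\mathbf{v};L^4(\mathcal{O})\|\le C(\mathcal{O})\|\mathbf{v};D\|$, so that convergence in $D$ implies convergence in both $H^1(\mathcal{O})$ and $L^4(\mathcal{O})$.

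For the bound \eqref{eq_trilinear}, I split $\mathbf{u}+\mathbf{e}_1$ into its two summands. Since $\mathbf{w}$ is supported in $\mathcal{O}$, the integral reduces to one over $\mathcal{O}$, and Hölder's inequality with exponents $(4,2,4)$ gives
\begin{equation*}
\left|\int_{\mathcal{O}}[\mathbf{u}\cdot\nabla\mathbf{v}]\cdot\mathbf{w}~d\mathbf{x}\right|\le\|\mathbf{u};L^4(\mathcal{O})\|~\|\nabla\mathbf{v};L^2(\mathcal{O})\|~\|\mathbf{w};L^4(\mathcal{O})\|~,
\end{equation*}
while for the constant field $\mathbf{e}_1$ the Cauchy--Schwarz inequality yields
\begin{equation*}
\left|\int_{\mathcal{O}}[\mathbf{e}_1\cdot\nabla\mathbf{v}]\cdot\mathbf{w}~d\mathbf{x}\right|\le\|\nabla\mathbf{v};L^2(\mathcal{O})\|~\|\mathbf{w};L^2(\mathcal{O})\|~.
\end{equation*}
Using $\|\nabla\mathbf{v};L^2(\mathcal{O})\|\le\|\mathbf{v};D\|$ and summing the two contributions produces exactly the right-hand side of \eqref{eq_trilinear}.

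For the identity \eqref{eq_trilinearsym}, I first take $\mathbf{u},\mathbf{v},\mathbf{w}\in\mathcal{D}$. Writing $\mathbf{a}=\mathbf{u}+\mathbf{e}_1$, which is smooth and divergence-free, and integrating by parts in each variable, the boundary terms vanish because $\mathbf{w}$ has compact support in $\Omega_+$, and the term containing $\nabla\cdot\mathbf{a}$ vanishes; this leaves $\int(\mathbf{a}\cdot\nabla\mathbf{v})\cdot\mathbf{w}=-\int(\mathbf{a}\cdot\nabla\mathbf{w})\cdot\mathbf{v}$. To reach general $(\mathbf{u},\mathbf{v})\in D^2$ and $\mathbf{w}\in D$ with support in $\mathcal{O}$, I approximate: I choose $\mathbf{u}_n,\mathbf{v}_n\in\mathcal{D}$ with $\mathbf{u}_n\to\mathbf{u}$ and $\mathbf{v}_n\to\mathbf{v}$ in $D$, and $\mathbf{w}_n\in\mathcal{D}$ with supports in a fixed neighborhood $\mathcal{O}'\subset\subset\Omega_+$ of $\overline{\mathcal{O}}$ and $\mathbf{w}_n\to\mathbf{w}$ in $D$. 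By the local embeddings above these convergences also hold in $L^4(\mathcal{O}')$ and $H^1(\mathcal{O}')$, and both trilinear forms (the left side, and the right side, whose integrand is supported in $\mathcal{O}'$ because $\nabla\mathbf{w}_n$ is) are continuous for these topologies by the estimate of the previous step applied on $\mathcal{O}'$. Passing to the limit in the smooth identity yields \eqref{eq_trilinearsym}, and the same limiting argument transfers \eqref{eq_trilinear} to all admissible triples.

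The main obstacle is the approximation of $\mathbf{w}$: one needs smooth, divergence-free, compactly supported $\mathbf{w}_n$ converging to $\mathbf{w}$ in $D$ while keeping their supports inside a fixed compact $\mathcal{O}'$. A direct cut-off destroys the divergence-free constraint, so I would instead write $\mathbf{w}=\nabla^\perp\psi$ for a stream function $\psi$, multiply $\psi$ by a fixed cut-off equal to $1$ on a neighborhood of $\mathrm{Supp}(\mathbf{w})$, mollify, and set $\mathbf{w}_n=\nabla^\perp\psi_n$; this preserves both the divergence-free condition and the support control while giving $D$-convergence. Everything else is routine Hölder and density bookkeeping.
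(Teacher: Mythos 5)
Your proof is correct, and it is the standard argument the authors had in mind: the paper itself gives no proof of Lemma \ref{lem_trilinear}, dismissing it as ``standard'', so there is nothing to compare against beyond noting that your H\"older/Cauchy--Schwarz splitting of $\mathbf{u}+\mathbf{e}_{1}$, the local Poincar\'e--Sobolev control $\Vert \cdot;L^{4}(\mathcal{O})\Vert \leq C(\mathcal{O})\Vert \cdot;D\Vert$ via Proposition \ref{prop_Hardy}, and the integration-by-parts-plus-density scheme are exactly what is expected. One small simplification: since $\mathbf{w}$ already has compact support in $\Omega_{+}$, plain mollification $\mathbf{w}\ast\rho_{\delta}$ preserves the divergence-free condition and keeps the support in a fixed $\mathcal{O}'\subset\subset\Omega_{+}$, so the stream-function detour for approximating $\mathbf{w}$ is not needed.
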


\bigskip

Below we show that, given a weak solution $\mathbf{u}$ of Problem 1, one can
construct a function $p$ such that the couple $(\mathbf{u},p)$ satisfies the
equation (\ref{Problem2}) in the classical sense. We will call $p$ the
pressure associated with the weak solution $\mathbf{u}$. Using 
the ellipticity of the Stokes operator together with the smoothness of the
boundary of the fluid domain, it is possible to prove that $(\mathbf{u}%
,p)\in \mathcal{C}^{\infty }(\overline{\Omega _{+}}\setminus S_{\varepsilon
})$, and that the boundary conditions (\ref{BC2}) on $S_{\varepsilon }$ and
on $\partial \Omega _{+}$ are satisfied in the classical sense. Therefore,
weak solutions have all the requested properties of classical solutions, and
the only difficulty with weak solutions is that their rate of decay at
infinity remains unknown.
A bound on the decay rate, like (\ref{decay}), is crucial in order to prove
uniqueness of solutions.

\subsection{Existence of weak solutions}

{In this section, we prove: }

\begin{theorem}
\label{thm_existencews} There exist constants $K<\infty $ and $\varepsilon
_{1}>0$ such that if $\varepsilon <\varepsilon _{1}$, there exists at least
one weak solution $\mathbf{u}$ for Problem$~$1 for $S_{\varepsilon }$,
satisfying the further bound $\Vert \mathbf{u};D\Vert \leq K$.\smallskip 
\end{theorem}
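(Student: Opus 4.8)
The plan is to follow Leray's classical invading-domain method, adapted to the half-space geometry with the small obstacle $S_\varepsilon$, and to track the dependence on $\varepsilon$ carefully enough to extract the uniform bound $\Vert\mathbf{u};D\Vert\le K$. First I would construct, for each $n$, a solution $\mathbf{u}_n$ of the approximate problem posed on the bounded truncated domain $\mathcal{A}_n\setminus\overline{S_\varepsilon}$ (or, equivalently, on $\Delta_n\setminus\overline{S_\varepsilon}$ with the wall boundary), with the same boundary data $\mathbf{u}_n|_{\partial S_\varepsilon}=-\mathbf{e}_1$, $\mathbf{u}_n|_{\partial\Omega_+}=0$ and homogeneous Dirichlet data on the outer boundary $\partial\Delta_n$. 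On such a bounded domain, existence of a weak solution for the Oseen-perturbed Navier--Stokes system is standard: one works in the finite-dimensional Galerkin spaces spanned by $\mathcal{D}^\varepsilon_{-\mathbf{e}_1}$ truncated to $\Delta_n$, the nonlinear map is continuous and coercive thanks to the skew-symmetry \eqref{eq_trilinearsym}, and a Brouwer fixed-point / Leray--Schauder argument produces $\mathbf{u}_n$.

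The key point is the \emph{a priori} energy estimate, uniform in both $n$ and $\varepsilon$. Testing the equation with $\mathbf{u}_n$ (legitimate on the bounded domain) and using that the convective term drops out as in \eqref{13}, one obtains $\int|\nabla\mathbf{u}_n|^2 = \mathbf{\Sigma}_n\cdot\mathbf{e}_1$, exactly as in \eqref{NRJ1}. To close this, I would split $\mathbf{u}_n = \mathbf{a}_\varepsilon + \mathbf{v}_n$ where $\mathbf{a}_\varepsilon$ is a fixed divergence-free lift of the boundary data $-\mathbf{e}_1$ on $S_\varepsilon$, supported in a fixed neighbourhood of the obstacle (say $B(h/2)$) and vanishing on $\partial\Omega_+$; then $\mathbf{v}_n\in D^\varepsilon_0$ satisfies a variational identity whose right-hand side is controlled by $\Vert\mathbf{a}_\varepsilon;D\Vert$ and the trilinear bound \eqref{eq_trilinear}. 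The crucial quantitative input is that the lift can be chosen so that $\Vert\mathbf{a}_\varepsilon;D\Vert$ and $\Vert\mathbf{a}_\varepsilon;L^4\Vert$ stay bounded (in fact small) as $\varepsilon\to0$; this is where the smallness of the obstacle enters. Hardy's inequality (Proposition~\ref{prop_Hardy}) then guarantees the nonlinear terms involving $\mathbf{v}_n$ are absorbed into the left-hand side once $\varepsilon$ is small, yielding $\Vert\mathbf{u}_n;D\Vert\le K$ for a constant $K$ independent of $n$ and valid for all $\varepsilon<\varepsilon_1$.

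With the uniform bound in hand, the passage to the limit $n\to\infty$ is the remaining step. Since $(\mathbf{u}_n)$ is bounded in the Hilbert space $D$, a subsequence converges weakly in $D$ to some $\mathbf{u}$, with $\Vert\mathbf{u};D\Vert\le K$ by weak lower semicontinuity. Extending each $\mathbf{u}_n$ by $-\mathbf{e}_1$ on $S_\varepsilon$ and by zero outside $\Delta_n$, I would check that $\mathbf{u}\in D^\varepsilon_{-\mathbf{e}_1}$ and pass to the limit in the weak formulation \eqref{eq_wf} against a \emph{fixed} test field $\mathbf{w}\in\mathcal{D}^\varepsilon$, whose compact support lies in $\Delta_n$ for all large $n$. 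The linear term passes by weak convergence; the nonlinear term requires strong $L^4_{\mathrm{loc}}$ convergence of $\mathbf{u}_n$ on the support of $\mathbf{w}$, which follows from Rellich compactness since $D$ embeds compactly into $L^4(\mathcal{O})$ for bounded $\mathcal{O}\subset\subset\Omega_+$. One must also verify that the forces $\mathbf{\Sigma}_n$ converge; their boundedness follows from $\Sigma_n\cdot\mathbf{e}_1=\int|\nabla\mathbf{u}_n|^2\le K^2$ together with a duality/test-function representation of the full vector $\mathbf{\Sigma}_n$ via \eqref{eq_wf}, so that along a further subsequence $\mathbf{\Sigma}_n\to\mathbf{\Sigma}$. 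Finally the energy inequality \eqref{eq_NRJ} is inherited in the limit (again by lower semicontinuity on the left and convergence on the right), so $\mathbf{u}$ is a genuine weak solution.

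The main obstacle, and the place requiring genuine care rather than routine work, is the uniform-in-$\varepsilon$ control of the boundary lift $\mathbf{a}_\varepsilon$: one needs a divergence-free extension of the constant field $-\mathbf{e}_1$ from $\partial S_\varepsilon$ into $\Omega_+$ whose Dirichlet energy does not blow up (and ideally vanishes) as the obstacle shrinks, while still respecting the wall condition on $\partial\Omega_+$. This is the two-dimensional analogue of the delicate construction in \cite[Chapter~III.5]{Galdi98}; the natural route is to use a Hopf-type cutoff of a stream function concentrated near $S_\varepsilon$, scaled by $\varepsilon$, and to invoke Hardy's inequality to handle the logarithmic subtleties peculiar to $2$D. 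Getting the bound to be not merely finite but uniform (which is what Theorem~\ref{thm_existencews} asserts and what the subsequent convergence Theorem~\ref{thm_cvg0main} will sharpen to smallness) is the crux of the argument.
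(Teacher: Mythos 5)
Your overall architecture coincides with the paper's: exhaustion by the invading domains $\Delta_n$, a Leray--Schauder/Galerkin existence argument on each bounded domain, a uniform a priori bound, and then weak convergence in $D$ plus Rellich compactness on the support of a fixed test field to pass to the limit in \eqref{eq_wf}, with \eqref{eq_NRJ} recovered by lower semicontinuity. Where you genuinely diverge is in the derivation of the uniform bound. The paper never introduces a boundary lift in the energy estimate: since the approximate solution $\mathbf{u}_n$ itself lies in the test space $D^{\varepsilon,n}$ (the test functions need only be \emph{constant} on $S_\varepsilon$, not zero), testing with $\mathbf{u}_n$ gives the exact identity $\Vert\mathbf{u}_n;D\Vert^2=\mathbf{\Sigma}_n\cdot\mathbf{e}_1$, and the whole burden is shifted to bounding the force $\mathbf{\Sigma}_n$. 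This is done by duality against the test function \eqref{cf}, a cutoff of a linear stream function at the obstacle scale $\delta(\varepsilon)=\lambda_0\varepsilon$, whose $D$- and $L^\infty$-norms are bounded uniformly in $\varepsilon$; a Poincar\'e inequality on the ball $B(2\delta(\varepsilon))$ (using $\mathbf{u}_n=-\mathbf{e}_1$ on $S_\varepsilon$) contributes the crucial factor $\varepsilon$, yielding $|\mathbf{\Sigma}_n|\le C_1(\Vert\mathbf{u}_n;D\Vert+C_2\varepsilon\Vert\mathbf{u}_n;D\Vert^2)$, and the quadratic inequality closes for $\varepsilon<1/(2C_1C_2)$. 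Your Hopf-lift decomposition $\mathbf{u}_n=\mathbf{a}_\varepsilon+\mathbf{v}_n$ is a legitimate alternative and rests on exactly the same two quantitative facts (uniform boundedness of the cutoff's Dirichlet norm, and the $O(\varepsilon)$ Poincar\'e constant near the shrinking obstacle); the paper's version has the advantage of producing the bound on $|\mathbf{\Sigma}_n|$ as a by-product, which is needed anyway to pass to the limit in the right-hand side of \eqref{eq_wf}, whereas your route must recover it afterwards by the duality argument you sketch.

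One point in your absorption step would fail as written: you invoke Hardy's inequality in the form of Proposition~\ref{prop_Hardy}, which weights by the distance $(y-1)$ to the \emph{wall}. Near $S_\varepsilon$ that distance is comparable to the fixed constant $h$, so Proposition~\ref{prop_Hardy} gives no smallness there, while $\nabla\mathbf{a}_\varepsilon$ is of size $\varepsilon^{-1}$ on its support; the estimate $\bigl|\int(\mathbf{v}_n\cdot\nabla\mathbf{a}_\varepsilon)\cdot\mathbf{v}_n\bigr|\le\Vert\nabla\mathbf{a}_\varepsilon;L^\infty\Vert\int_{B(2\delta(\varepsilon))}|\mathbf{v}_n|^2$ would then be of order $\varepsilon^{-1}\Vert\mathbf{v}_n;D\Vert^2$ rather than $\varepsilon\Vert\mathbf{v}_n;D\Vert^2$. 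The correct tool is the rescaled Poincar\'e inequality on $B(2\delta(\varepsilon))$ for fields vanishing on $S_\varepsilon$, whose constant scales like $\varepsilon$ (the paper cites \cite[Exercise 4.10]{Galdi94} for this); with that substitution your absorption argument goes through and delivers the same $n$- and $\varepsilon$-independent constant $K$.
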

The proof is based on the exhaustion method of Leray. Namely, we
consider a nested sequence of finite domains that converge to $\Omega_+$
and, for any domain of this sequence, we prove existence of one approximate
weak solution having support in this domain and satisfying a suitable
estimate. Our result then follows by a compactness argument. Many aspects of
the proof are standard, but the uniform bound is new to our knowledge.

\subsubsection{\label{sec_WS0}{Sketch of proof for Theorem }\protect\ref%
{thm_existencews}}

In this proof the size $\varepsilon $ of the obstacle is again fixed such
that ${ S}_{\varepsilon }\subset B(h/3)$%
. We mention further assumptions on $\varepsilon $ when needed. We consider
the sequence $\left( \Delta _{n}\right) _{n\geq 1}$ given in the
introduction. This sequence satisfies, for all $n\in \mathbb{N}$ :

\begin{itemize}
\item $\Delta _{n}$ is a bounded open  set having a smooth
boundary

\item $S_{\varepsilon }\subset \subset \Delta _{n}\subset \Delta _{n+1}$

\item $\bigcup_{n\in \mathbb{N}}\Delta _{n}=\Omega _{+}$.
\end{itemize}

Given $\Delta _{n}$, we define $D^{\varepsilon ,n}$ and $D_{\mathbf{W}%
}^{\varepsilon ,n}$ by%
\begin{equation}
D^{\varepsilon ,n}=\{\mathbf{w}\in D^{\varepsilon }\left\vert {}\right. 
\mathbf{w}_{|_{\Omega _{+}\setminus \overline{\Delta _{n}}}}=0\}~,\quad D_{%
\mathbf{W}}^{\varepsilon ,n}=\{\mathbf{w}\in D_{\mathbf{W}}^{\varepsilon
}\left\vert {}\right. \mathbf{w}_{|_{\Omega _{+}\setminus \overline{\Delta
_{n}}}}=0\}~.
\end{equation}

\bigskip

\noindent {With these conventions, the definition of approximate weak
solutions for Problem$~$1 }%
is:
\begin{definition}
\label{apf} {Let $n\in \mathbb{N}$. A vector-field $\mathbf{u}$ is called an
approximate weak solution on $\Delta _{n}$ if:}

\begin{enumerate}
\item[$(i)$] $\mathbf{u}\in {D}_{-\mathbf{e}_{1}}^{\varepsilon ,n}$,

\item[$(ii)$] for all $\mathbf{w}\in {D}_{0}^{\varepsilon ,n}$,%
\begin{equation}
\int_{\Omega _{+}}\nabla \mathbf{u}:\nabla \mathbf{w}~d\mathbf{x}%
+\int_{\Omega _{+}}\left[ (\mathbf{u}+\mathbf{e}_{1})\cdot \nabla \mathbf{u}%
\right] \cdot \mathbf{w}~d\mathbf{x}=0~.  \label{eq_wftrunc}
\end{equation}
\end{enumerate}
\end{definition}

{Before giving a sketch of the proof of }{\bf Theorem \ref{thm_existencews}}, {we
mention that, since }$D_{0}^{\varepsilon ,n}$ is a closed subspace of $%
D^{\varepsilon ,n}$ of codimension two, the Lagrange multiplier theorem
implies the existence of a vector $\mathbf{\Sigma }\in \mathbb{R}^{2}$, such
that for all $\mathbf{w}\in {D}^{\varepsilon ,n}$%
\begin{equation}
\int_{\Omega _{+}}\nabla \mathbf{u}:\nabla \mathbf{w}~d\mathbf{x}%
+\int_{\Omega _{+}}\left[ (\mathbf{u}+\mathbf{e}_{1})\cdot \nabla \mathbf{u}%
\right] \cdot \mathbf{w}~d\mathbf{x}=-\mathbf{\Sigma }\cdot \Gamma (\mathbf{w%
})~.  \label{eq_wftruncext}
\end{equation}%
The vector $\mathbf{\Sigma }$ is the force associated with the approximate
weak solution $\mathbf{u}$. Since $\mathbf{u}\in D^{\varepsilon ,n}$, we can
replace $\mathbf{w}$ by $\mathbf{u}$ in (\ref{eq_wftruncext}), and an
integration by parts yields%
\begin{equation}
\int_{\Omega _{+}}|\nabla \mathbf{u}|^{2}~d\mathbf{x}={\mathbf{\Sigma }}%
\cdot \mathbf{e}_{1}~.  \label{eq_NRJtrunc}
\end{equation}%
The energy (in)equality is therefore a consequence of Definition \ref{apf},
and for this reason we do not need to impose it in the definition of
approximate weak solutions in contrast with the definition of weak solutions.

\bigskip

\noindent The proof of {\bf Theorem \ref{thm_existencews}} is based on the
following two lemmas:

\begin{lemma}
\label{lemexistence}There exists a constant $\varepsilon _{1}>0$ such that {%
if} $\varepsilon <\varepsilon _{1}$ there exists at least one approximate
weak solution on $\Delta _{n}$, for
all $n\in \mathbb{N}$.
\end{lemma}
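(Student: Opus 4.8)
The plan is to reduce Problem~1 on the bounded domain $\Delta_n$ to a homogeneous problem and then to solve it by a Galerkin scheme, exploiting that $\Delta_n$ is bounded so that $D_0^{\varepsilon,n}$ embeds compactly into $L^4(\Delta_n)$. First I would fix, once and for all and independently of $n$, a divergence-free lift $\mathbf{a}_\varepsilon$ of the boundary data: a smooth field with compact support in $B(h/3)$ that equals $-\mathbf{e}_1$ on $S_\varepsilon$, so that $\mathbf{a}_\varepsilon\in\mathcal{D}_{-\mathbf{e}_1}^{\varepsilon,n}$ for every $n\ge1$ since $B(h/3)\subset\Delta_n$. The decisive property required of $\mathbf{a}_\varepsilon$ is the Hopf--Leray bound
\begin{equation*}
\Big|\int_{\Omega_+}(\mathbf{v}\cdot\nabla\mathbf{a}_\varepsilon)\cdot\mathbf{v}\,d\mathbf{x}\Big|\le\delta(\varepsilon)\,\Vert\mathbf{v};D\Vert^{2},\qquad\mathbf{v}\in D,
\end{equation*}
with $\delta(\varepsilon)<\tfrac12$ for $\varepsilon<\varepsilon_1$. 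Such a lift exists because the flux of the boundary datum through $\partial S_\varepsilon$ vanishes, $\int_{\partial S_\varepsilon}(-\mathbf{e}_1)\cdot\mathbf{n}\,d\sigma=0$; this is exactly the condition under which a logarithmic (Hopf) cut-off produces a small trilinear constant, and the smallness of the obstacle is what makes $\delta(\varepsilon)$ small.

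Writing $\mathbf{u}=\mathbf{a}_\varepsilon+\mathbf{v}$ with $\mathbf{v}\in D_0^{\varepsilon,n}$, Definition~\ref{apf} becomes: find $\mathbf{v}\in D_0^{\varepsilon,n}$ such that, for all $\mathbf{w}\in D_0^{\varepsilon,n}$,
\begin{equation*}
((\mathbf{v},\mathbf{w}))+\int_{\Omega_+}\big[(\mathbf{a}_\varepsilon+\mathbf{v}+\mathbf{e}_1)\cdot\nabla(\mathbf{a}_\varepsilon+\mathbf{v})\big]\cdot\mathbf{w}\,d\mathbf{x}=-((\mathbf{a}_\varepsilon,\mathbf{w}))~.
\end{equation*}
I would solve this by Galerkin: pick a Hilbert basis $(\mathbf{g}_k)_{k\ge1}$ of $D_0^{\varepsilon,n}$ and look for $\mathbf{v}_m=\sum_{k=1}^{m}c_k\mathbf{g}_k$ satisfying the equation for $\mathbf{w}=\mathbf{g}_1,\dots,\mathbf{g}_m$. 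This is a finite system of quadratic equations in $(c_k)$, and existence of a solution follows from Brouwer's fixed point theorem (equivalently, from the standard lemma on a continuous vector field pointing outward on a large sphere) once an a priori bound $\Vert\mathbf{v}_m;D\Vert\le K'$ is available.

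The a priori bound comes from testing with $\mathbf{v}_m$ itself. By the antisymmetry \eqref{eq_trilinearsym} of Lemma~\ref{lem_trilinear}, the purely cubic term $\int_{\Omega_+}[(\mathbf{a}_\varepsilon+\mathbf{v}_m+\mathbf{e}_1)\cdot\nabla\mathbf{v}_m]\cdot\mathbf{v}_m\,d\mathbf{x}$ vanishes, and the only term quadratic in $\mathbf{v}_m$ that survives is $\int_{\Omega_+}(\mathbf{v}_m\cdot\nabla\mathbf{a}_\varepsilon)\cdot\mathbf{v}_m\,d\mathbf{x}$, which is controlled by $\delta(\varepsilon)\Vert\mathbf{v}_m;D\Vert^{2}$. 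The remaining contributions are at most linear in $\mathbf{v}_m$ and are bounded, via \eqref{eq_trilinear}, by $C(\mathbf{a}_\varepsilon)\Vert\mathbf{v}_m;D\Vert$. Hence $(1-\delta(\varepsilon))\Vert\mathbf{v}_m;D\Vert^{2}\le C(\mathbf{a}_\varepsilon)\Vert\mathbf{v}_m;D\Vert$, and since $\delta(\varepsilon)<\tfrac12$ this yields a bound $K'$ independent of $m$. Finally I would let $m\to\infty$: the uniform bound gives $\mathbf{v}_m\rightharpoonup\mathbf{v}$ weakly in $D_0^{\varepsilon,n}$, and the compact embedding $D_0^{\varepsilon,n}\hookrightarrow L^4(\Delta_n)$ (Rellich, using that $\Delta_n$ is bounded) gives strong $L^4$ convergence. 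Rewriting the convective term through \eqref{eq_trilinearsym} so that the derivative falls on the fixed test field, weak-$D$ together with strong-$L^4$ convergence suffices to pass to the limit term by term; thus $\mathbf{v}$ solves the weak equation and $\mathbf{u}=\mathbf{a}_\varepsilon+\mathbf{v}$ is the desired approximate weak solution.

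The main obstacle is the first step, the construction of the lift $\mathbf{a}_\varepsilon$ with trilinear constant $\delta(\varepsilon)<\tfrac12$: in two dimensions a naive rescaled lift has a scale-invariant Dirichlet norm, so smallness cannot come from the energy norm alone, and one must use the zero-flux condition together with a Hopf-type logarithmic cut-off (and the smallness of $\varepsilon$) to push the constant below $\tfrac12$. Everything else is the classical Leray--Hopf argument on a bounded domain; what makes the conclusion uniform in $n$ is precisely that $\mathbf{a}_\varepsilon$ and its constant $\delta(\varepsilon)$ are chosen once inside $B(h/3)\subset\Delta_n$, so the same threshold $\varepsilon_1$ works for all $n$.
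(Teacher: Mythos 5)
Your argument is correct and proves the lemma, but it follows the classical Leray--Hopf route rather than the paper's. You lift the boundary datum by a divergence-free extension $\mathbf{a}_{\varepsilon}$ with small trilinear constant $\delta(\varepsilon)<\tfrac12$, absorb the quadratic term $\int(\mathbf{v}\cdot\nabla\mathbf{a}_{\varepsilon})\cdot\mathbf{v}$ into the Dirichlet energy, and conclude by Galerkin plus Rellich on the bounded set $\Delta_{n}$. The paper instead takes as lift the $D$-orthogonal element $\mathbf{U}_{-\mathbf{e}_{1}}$ (the $D$-norm minimizer in $D_{-\mathbf{e}_{1}}^{\varepsilon,n}$), rewrites the problem as the fixed-point equation \eqref{fe} and applies Leray--Schauder; the required a priori bound (Proposition \ref{both}) is not obtained by absorbing a quadratic term, but from the exact energy identity $\Vert\mathbf{u};D\Vert^{2}=\mathbf{\Sigma}\cdot\mathbf{e}_{1}$ combined with a bound on the force $\mathbf{\Sigma}$, derived by testing with a field $\mathbf{w}_{\varepsilon}$ concentrated at the obstacle scale, where the smallness of $\varepsilon$ enters through the $\varepsilon$-scaling of the Poincar\'e constant. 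The trade-off is instructive: the paper's route yields $\Vert\mathbf{u};D\Vert\leq 2C_{1}$ uniformly in $n$, $\lambda$ \emph{and} $\varepsilon$, which is exactly Lemma \ref{lemuniformestimate} and is indispensable later (the family $(\mathbf{u}_{\varepsilon})$ must be bounded in $D$ uniformly in $\varepsilon$ for the compactness argument of Lemma \ref{lem_cvg0aux}); your constant $C(\mathbf{a}_{\varepsilon})/(1-\delta(\varepsilon))$ is uniform in $n$ but not obviously in $\varepsilon$, since, as you yourself note, the Dirichlet norm of any lift of $-\mathbf{e}_{1}$ does not shrink with the obstacle. Conversely, your route requires no smallness of $\varepsilon$ at all: the Hopf cut-off gives $\delta<\tfrac12$ for any fixed obstacle, and in fact for a lift supported in a ball of radius $O(\varepsilon)$ a H\"older estimate on the shrinking support already gives $\delta(\varepsilon)\to 0$ without any logarithmic cut-off --- the same mechanism as the paper's $\varepsilon$-dependent Poincar\'e constant, so your concern that the Hopf construction is unavoidable is slightly overstated. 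Two points you should make explicit: the glued field ($-\mathbf{e}_{1}$ inside $S_{\varepsilon}$, extension outside) must be shown to belong to $D_{-\mathbf{e}_{1}}^{\varepsilon,n}$, which follows from the density $\overline{\mathcal{D}_{\mathbf{W}}^{\varepsilon}}=D_{\mathbf{W}}^{\varepsilon}$ quoted from Galdi; and the zero-flux hypothesis of Hopf's lemma indeed holds through each component of $\partial S_{\varepsilon}$ because the datum is a constant field.
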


\begin{lemma}
\label{lemuniformestimate}Let $\varepsilon $ be as in Lemma \ref%
{lemexistence} and $n\in \mathbb{N}$. There exists a constant $K<\infty $,
such that $\Vert \mathbf{u};D\Vert +\left\vert \mathbf{\Sigma }\right\vert
\leq K$ for any approximate  weak solution $\mathbf{u}$ on $\Delta _{n}$ with associated force $\mathbf{\Sigma 
}$.
\end{lemma}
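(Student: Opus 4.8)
The plan is to combine the energy identity \eqref{eq_NRJtrunc}, which reads $\|\mathbf{u};D\|^{2}=\mathbf{\Sigma}\cdot\mathbf{e}_{1}$, with an independent bound on the force $\mathbf{\Sigma}$ obtained by testing the extended weak formulation \eqref{eq_wftruncext} against well-chosen fixed vector fields. Since \eqref{eq_NRJtrunc} already gives $X^{2}\le|\mathbf{\Sigma}|$ where $X:=\|\mathbf{u};D\|$, it suffices to bound $|\mathbf{\Sigma}|$ in terms of $X$ with constants independent of $n$, and then to absorb. The decisive point is that the smallness of $\varepsilon$ will furnish the small coefficient needed to close the resulting quadratic inequality.

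First I would construct, for $j=1,2$, a divergence-free field $\mathbf{b}^{(j)}\in\mathcal{D}^{\varepsilon}$ with $\mathbf{b}^{(j)}=\mathbf{e}_{j}$ on $S_{\varepsilon}$ and support contained in the fixed ball $\mathcal{O}:=B(h/2)\subset\subset\Omega_{+}$. Since $\mathcal{O}\subset B(2h)\subset\Delta_{1}\subset\Delta_{n}$, each $\mathbf{b}^{(j)}$ lies in $D^{\varepsilon,n}$ for every $n\ge1$ and is an admissible test function. The key quantitative property, classical for such flux carriers in two dimensions, is that because $S_{\varepsilon}$ has diameter of order $\varepsilon$ while the available transition region has size of order $h$, one may arrange $\delta:=\max_{j}\|\mathbf{b}^{(j)};D\|$ to be of order $1/|\log(h/\varepsilon)|$, hence arbitrarily small as $\varepsilon\to0$. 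This is where the smallness of the obstacle, rather than uniformity in $n$, enters.

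Next, testing \eqref{eq_wftruncext} with $\mathbf{w}=\mathbf{b}^{(j)}$ expresses $-\mathbf{\Sigma}\cdot\mathbf{e}_{j}$ as the sum of $\int_{\Omega_{+}}\nabla\mathbf{u}:\nabla\mathbf{b}^{(j)}$ and the trilinear term. The first is bounded by $\delta X$ via Cauchy--Schwarz. For the second I would use the antisymmetry \eqref{eq_trilinearsym} (legitimate since $\mathbf{b}^{(j)}$ has compact support in $\mathcal{O}$) to rewrite it as $-\int_{\Omega_{+}}[(\mathbf{u}+\mathbf{e}_{1})\cdot\nabla\mathbf{b}^{(j)}]\cdot\mathbf{u}$, then estimate it by $\|\mathbf{u}+\mathbf{e}_{1};L^{4}(\mathcal{O})\|\,\|\nabla\mathbf{b}^{(j)};L^{2}(\mathcal{O})\|\,\|\mathbf{u};L^{4}(\mathcal{O})\|$. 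On the fixed set $\mathcal{O}$ these local norms are controlled by $X$: Hardy's inequality (Proposition \ref{prop_Hardy}) bounds $\|\mathbf{u};L^{2}(\mathcal{O})\|$ by $X$, while $\mathbf{u}+\mathbf{e}_{1}$ vanishes on $S_{\varepsilon}$ so a Poincaré inequality on $\mathcal{O}$ bounds $\|\mathbf{u}+\mathbf{e}_{1};L^{2}(\mathcal{O})\|$ by $\|\nabla\mathbf{u};L^{2}(\mathcal{O})\|\le X$; Ladyzhenskaya's inequality then upgrades these to $\|\mathbf{u}+\mathbf{e}_{1};L^{4}(\mathcal{O})\|\le CX$ and $\|\mathbf{u};L^{4}(\mathcal{O})\|\le C(1+X)$, with constants depending on $\mathcal{O}$ and $\varepsilon$ but not on $n$. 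Collecting terms yields $|\mathbf{\Sigma}|\le C\delta\,X(1+X)$.

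Finally I would close the argument. Inserting this bound into the energy identity gives $X^{2}=\mathbf{\Sigma}\cdot\mathbf{e}_{1}\le|\mathbf{\Sigma}|\le C\delta X(1+X)$, hence $X(1-C\delta)\le C\delta$. Choosing $\varepsilon_{1}$ small enough that $C\delta<\tfrac{1}{2}$ (possible since $\delta\to0$ as $\varepsilon\to0$) forces $X\le2C\delta$, a bound independent of $n$, whereupon $|\mathbf{\Sigma}|\le C\delta X(1+X)$ is likewise bounded; taking $K$ to be the resulting constant proves the lemma. The main obstacle is the construction and energy estimate of the carriers $\mathbf{b}^{(j)}$: it is precisely here that the two-dimensional Stokes paradox must be circumvented, with the finite distance $h$ to the wall guaranteeing a logarithmically small Dirichlet energy. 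Everything else reduces to Cauchy--Schwarz, the trilinear Lemma \ref{lem_trilinear}, and standard local Sobolev and Poincaré inequalities on the fixed set $\mathcal{O}$.
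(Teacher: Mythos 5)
Your overall architecture coincides with the paper's: use the energy identity \eqref{eq_NRJtrunc} to reduce everything to a bound on $\mathbf{\Sigma}$, obtain that bound by testing \eqref{eq_wftruncext} against a special divergence-free field equal to a prescribed constant on $S_{\varepsilon}$, and close the resulting quadratic inequality using the smallness of $\varepsilon$. The difference is where the small parameter comes from. The paper (Proposition \ref{both}) takes the test function $\mathbf{w}_{\varepsilon}=-\nabla^{\bot}\bigl(\chi_{\delta(\varepsilon)}\,[\mathbf{W}^{\bot}\cdot(\cdot-(0,1+h))]\bigr)$ supported in a ball of radius $\sim\varepsilon$; its $D$- and $L^{\infty}$-norms are only $O(|\mathbf{W}|)$, and the smallness is extracted from the Poincar\'e inequality on that \emph{shrinking} ball, whose constant scales like $\varepsilon$. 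You instead keep the support fixed and make the Dirichlet energy $\delta$ of the carrier itself small, via the logarithmic capacity of a small disk in two dimensions. Both mechanisms are legitimate; yours, if completed, even yields the stronger conclusion $\Vert\mathbf{u};D\Vert\leq 2C\delta\rightarrow 0$, which would already anticipate Lemma \ref{lem_cvg0aux} for the approximate solutions, at the price of actually constructing the log-cutoff stream-function carrier and verifying $\Vert\mathbf{b}^{(j)};D\Vert^{2}\sim|\log(h/\varepsilon)|^{-1}$ (you assert this; it is true and standard, but it is the heart of your argument and must be written out).

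There is, however, one step that does not close as written. You bound $\Vert\mathbf{u}+\mathbf{e}_{1};L^{4}(\mathcal{O})\Vert$ by a Poincar\'e inequality relative to $S_{\varepsilon}$ followed by Ladyzhenskaya, and you concede that the resulting constant depends on $\varepsilon$; but the Poincar\'e constant for functions vanishing only on a set of diameter $\sim\varepsilon$ inside the \emph{fixed} ball $\mathcal{O}$ blows up like $|\log(h/\varepsilon)|^{1/2}$, i.e.\ at exactly the rate at which $\delta$ decays. Tracking the exponents through $\Vert f\Vert_{L^{4}}^{2}\leq C\Vert f\Vert_{L^{2}}\Vert f\Vert_{H^{1}}$ (the version applicable here, since $\mathbf{u}+\mathbf{e}_{1}\notin H^{1}_{0}(\mathcal{O})$), the product $C(\varepsilon)\,\delta(\varepsilon)$ appearing in your final absorption is only $O(1)$, so the step ``choose $\varepsilon_{1}$ so that $C\delta<\tfrac12$'' is not justified: you are quantifying over a constant that grows as fast as $\delta$ shrinks. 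The repair is easy and keeps your strategy intact: do not apply Poincar\'e relative to $S_{\varepsilon}$ at all, but write $\Vert\mathbf{u}+\mathbf{e}_{1};L^{4}(\mathcal{O})\Vert\leq\Vert\mathbf{u};L^{4}(\mathcal{O})\Vert+|\mathcal{O}|^{1/4}$ and control $\Vert\mathbf{u};L^{4}(\mathcal{O})\Vert$ by Hardy (Proposition \ref{prop_Hardy}, using that $y-1$ is bounded above and below on $\mathcal{O}$) plus Sobolev on the fixed ball, exactly as you already do for the other factor. Then all constants are independent of $\varepsilon$ and $n$, the bound becomes $|\mathbf{\Sigma}|\leq C\delta X(1+X)$ with $C$ universal, and the absorption is valid.
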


{Proofs for these lemmas are given in Section \ref{Lemma8}. We sketch now the
remaining steps of the proof of }{\bf Theorem \ref{thm_existencews}}
assuming that $\varepsilon <\varepsilon _{1}$.

\bigskip

\noindent $(i)$ {By Lemma \ref{lemexistence}, there exists} a sequence $(%
\mathbf{u}_{n},\mathbf{\Sigma }_{n})_{n \geq 1}$ such that $\mathbf{u}_{n}$ is an
approximate weak solution on $\Delta _{n}$ with associated force $\mathbf{%
\Sigma }_{n}$. By Lemma \ref{lemuniformestimate} this sequence is bounded in 
$D\times \mathbb{R}^{2}$. One can therefore extract a subsequence $(\mathbf{u%
}_{n_{i}},\mathbf{\Sigma }_{n_{i}})_{i\geq 1}$, such that $(\mathbf{u}%
_{n_{i}})_{i\geq 1}$ converges in $D$ weakly to $\mathbf{u}$ and such that $(%
\mathbf{\Sigma }_{n_{i}})_{i\geq 1}$ converges in $\mathbb{R}^{2}$ strongly
to $\mathbf{\Sigma }$. By Hardy's inequality the sequence $(\mathbf{u}%
_{n_{i}})_{i\geq 1}$ is bounded in $H^{1}(S_{\varepsilon })$ . {We can
therefore extract a subsequence which converges in $L^{2}(S_{\varepsilon })$
strongly to $\mathbf{u}$. Since $\mathbf{u}_{n}=-\mathbf{e}_{1}$, for all $%
n\in \mathbb{N},$ we find that $\mathbf{u}\in {D}_{-\mathbf{e}%
_{1}}^{\varepsilon }$.}

\bigskip

\noindent $(ii)$ Given $\mathbf{w}\in {\mathcal{D}^{\varepsilon }}$ there
exists $n_{\mathbf{w}}>0$, such that $\mathbf{w}\in {D}^{\varepsilon ,n}$
for all $n\geq n_{\mathbf{w}}$. Therefore, we have for $i$ sufficiently large%
\begin{equation}
\int_{\Omega _{+}}\nabla \mathbf{u}_{n_{i}}:\nabla \mathbf{w}~d\mathbf{x}%
+\int_{\Omega _{+}}\left[ (\mathbf{u}_{n_{i}}+\mathbf{e}_{1})\cdot \nabla 
\mathbf{u}_{n_{i}}\right] \cdot \mathbf{w}~d\mathbf{x}=-\mathbf{\Sigma }%
_{n_{i}}\cdot \Gamma (\mathbf{w})~.  \label{jw}
\end{equation}%
Since $H^{1}(\Delta _{n_{\mathbf w}})$ is compactly imbedded in $L^{4}(\Delta _{n_{\mathbf w}}),$ we
find using Lemma \ref{lem_trilinear}, that (\ref{jw}) remains valid in the
limit. This shows that 
\begin{equation}
\int_{\Omega _{+}}\nabla \mathbf{u}:\nabla \mathbf{w}~d\mathbf{x}%
+\int_{\Omega _{+}}\left[ (\mathbf{u}+\mathbf{e}_{1})\cdot \nabla \mathbf{u}%
\right] \cdot \mathbf{w}~d\mathbf{x}=-\mathbf{\Sigma }\cdot \Gamma (\mathbf{w%
})~.  \label{c1}
\end{equation}%
In the weak limit we have moreover that 
\begin{equation}
\int_{\Omega _{+}}|\nabla \mathbf{u}|^{2}~d\mathbf{x}\leq
\liminf_{i\rightarrow \infty }\int_{\Omega _{+}}|\nabla \mathbf{u}%
_{n_{i}}|^{2}~d\mathbf{x}=\liminf\limits_{i\rightarrow \infty }\mathbf{%
\Sigma }_{n_{i}}\cdot \mathbf{e}_{1}=\mathbf{\Sigma }\cdot \mathbf{e}_{1}~.
\label{c2}
\end{equation}%
Combining (\ref{c1}) and (\ref{c2}) we conclude that there exists $\mathbf{%
\Sigma }\in \mathbb{R}^{2}$ such that, for all $\mathbf{w}\in { {\mathcal{D}}^{\varepsilon}}$,%
\begin{equation}
\int_{\Omega _{+}}\nabla \mathbf{u}:\nabla \mathbf{w}~d\mathbf{x}%
+\int_{\Omega _{+}}\left[ (\mathbf{u}+\mathbf{e}_{1})\cdot \nabla \mathbf{u}%
\right] \cdot \mathbf{w}~d\mathbf{x}=-\mathbf{\Sigma }\cdot \Gamma (\mathbf{w%
})~,
\end{equation}%
and that 
\begin{equation}
\int_{\Omega _{+}}|\nabla \mathbf{u}|^{2}~d\mathbf{x}\leq {\mathbf{\Sigma }}%
\cdot \mathbf{e}_{1}~.
\end{equation}%
This completes the proof of {\bf Theorem \ref{thm_existencews}}.

\subsubsection{\label{Lemma8}Proofs of Lemma \protect\ref{lemexistence} and
Lemma \protect\ref{lemuniformestimate}}

In these proofs $n\in \mathbb N$ is fixed.
Since $D^{\varepsilon ,n}$ and $D_{0}^{\varepsilon ,n}$ are closed subspaces
of ${D}^{\varepsilon }$, they are Hilbert spaces with respect to the scalar
product (\ref{sp}). { The space  $D_{-\mathbf{e}_{1}}^{\varepsilon ,n}$ is
not empty.  Indeed, let $\chi $ be a smooth cut-off function that is equal to one
outside the disk ${B(}2h/3)$ and equal to zero inside the disk ${B(}h/3)$.
As $\overline{S_{\varepsilon }}\subset {B(}h/3)$ the function $\mathbf{\tilde{U}}_{-\mathbf{e}_{1}}(x,y)=-\nabla ^{\bot
}((1-\chi )~y)$ satisfies  $\mathbf{\tilde{U}}_{-\mathbf{e}
_{1}} \in D_{-\mathbf{e}_{1}}^{\varepsilon ,n}.$ We note that $D_{-\mathbf{e}_{1}}^{\varepsilon ,n}$ is an affine
subspace of $D^{\varepsilon ,n}$ with direction   $D^{\varepsilon,n}_{0}.$
For technical reason (see \eqref{fe}), we also introduce $\mathbf{U}_{-\mathbf{e}_{1}}$
the unique minimizer of the $D$-norm amongst the velocity-fields in $D^{\varepsilon,n}_{-\mathbf{e}_1}.$
This  velocity field satisfies :
\begin{enumerate}
\item $D^{\varepsilon,n}_{-\mathbf{e}_1} = \mathbf{U}_{-\mathbf{e}_{1}} + D^{\varepsilon,n}_{0}$
\item $((\mathbf{U}_{-\mathbf{e}_{1}},\mathbf{w})) = 0$ for all velocity fields $\mathbf{w} \in D^{\varepsilon,n}_{0}.$
\end{enumerate}
}
We now reformulate the existence of an approximate weak solution on $\Delta
_{n}$ as a fixed point problem for a functional equation. First, we note
that {\bf Lemma \ref{lem_trilinear}} implies that for all $\mathbf{u}\in D_{-%
\mathbf{e}_{1}}^{\varepsilon ,n}$, the map%
\begin{equation*}
\begin{array}{rcl}
D_{0}^{\varepsilon ,n} & \rightarrow & \mathbb{R}~ \\ 
\mathbf{w} & \mathbf{\mapsto } & \displaystyle{\int_{\Omega _{+}}}\left[ (%
\mathbf{u}+\mathbf{e}_{1})\cdot \nabla \mathbf{u}\right] \cdot \mathbf{w}~d%
\mathbf{x}~,%
\end{array}%
\end{equation*}%
is a continuous linear form. By the Riesz-Fr\'{e}chet theorem we can
therefore define a continuous map $b_{n}^{\ast }$ from $D_{-\mathbf{e}%
_{1}}^{\varepsilon ,n}$ to $D_{0}^{\varepsilon ,n}$ by the formula 
\begin{equation}
((b_{n}^{\ast }(\mathbf{u}),\mathbf{w}))=\int_{\Omega _{+}}\left[ (\mathbf{u}%
+\mathbf{e}_{1})\cdot \nabla \mathbf{u}\right] \cdot \mathbf{w}~d\mathbf{x}%
~,\quad \forall \,\mathbf{w}\in {D}_{0}^{\varepsilon ,n}~.  \label{bns}
\end{equation}%
With these definitions we find, on the one hand, that
$\mathbf{u}$ is  an approximate weak solution on $\Delta _{n}$ if and only
if $\mathbf{u}=\mathbf{U}_{-\mathbf{e}_{1}}+\mathbf{v}$, with {$\mathbf{v}$
a solution of the functional equation 
\begin{equation}
\mathbf{v}=b_{n}^{\ast }(\mathbf{U}_{-\mathbf{e}_{1}}+\mathbf{v})~,\qquad 
\mathbf{v}\in D_{0}^{\varepsilon ,n}~.  \label{fe}
\end{equation}%
On the other hand, \eqref{eq_trilinear} together with \eqref{eq_trilinearsym}
imply that $b_{n}^{\ast }$ is continuous on $D_{-\mathbf{e}%
_{1}}^{\varepsilon ,n}$ equipped with the $L^{4}(\Delta _{n})$-norm. Using
that $H_{0}^{1}(\Delta _{n})$ is compactly imbedded in $L^{4}(\Delta
_{n})$ yields that $b_{n}^{\ast }$ is completely continuous, \emph{i.e.}%
, for any given bounded sequence $(\mathbf{v}_{i})_{i\geq 1}$ in ${D}%
_{0}^{\varepsilon ,n}$, there exists a subsequence $(\mathbf{v}%
_{i_{j}})_{j\geq 1}$ such that the sequence $(b_{n}^{\ast }(\mathbf{U}_{-%
\mathbf{e}_{1}}+\mathbf{v}_{i_{j}}))_{j\geq 1}$ converges strongly in ${D}%
_{0}^{\varepsilon ,n}$. Hence, the Leray-Schauder fixed point theorem (see 
\cite{Kavian93} or \cite[Theorem 11.6, p. 286]{Gilbarg&Trudinger01} for more
details) 
guarantees the existence of a solution of \eqref{fe} by proving a suitable estimate on \emph{a priori} solutions to an auxiliary
problem. This estimate is the content of the following proposition.}

\begin{proposition}
\label{both}There exist constants $\varepsilon _{1}>0$ and $C<\infty $, such
that for all $\varepsilon <\varepsilon _{1}$, $\lambda \in \lbrack 0,1]$ and
all $(\mathbf{u},\mathbf{\Sigma })\in {D_{-\mathbf{e}_{1}}^{\varepsilon ,n}}%
\times \mathbb{R}^{2}$ which satisfy 
\begin{equation}
\int_{\Omega _{+}}\nabla \mathbf{u}:\nabla \mathbf{w}+\lambda \int_{\Omega
_{+}}\left[ (\mathbf{u}+\mathbf{e}_{1})\cdot \nabla \mathbf{u}\right] \cdot 
\mathbf{w}=-\mathbf{\Sigma }\cdot \Gamma (\mathbf{w})~,\quad \forall \ \mathbf{%
w}\in {D}^{\varepsilon ,n}~,  \label{eq_nesttest}
\end{equation}%
we have the bound $\left\Vert \mathbf{u};D\right\Vert +\left\vert \mathbf{%
\Sigma }\right\vert \leq C$.
\end{proposition}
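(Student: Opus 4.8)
The plan is to prove the two bounds separately: first a bound on $\Vert\mathbf{u};D\Vert$ that is uniform in $\lambda\in[0,1]$ and $n$ and small as $\varepsilon\to0$, and then a bound on $|\mathbf{\Sigma}|$ deduced from it by testing against fixed vector-fields. The mechanism that closes the first estimate is the two-dimensional Stokes paradox: although $\mathbf{u}$ equals $-\mathbf{e}_1$ on $S_{\varepsilon}$, this constraint can be met by a divergence-free field of small Dirichlet norm once $\varepsilon$ is small. I first record two facts. (a) By Hardy's inequality (Proposition \ref{prop_Hardy}) and the embedding $H^{1}\hookrightarrow L^{4}$ in dimension two, every $\mathbf{f}\in D$ satisfies $\Vert\mathbf{f};L^{4}(\mathcal{O}_0)\Vert\le C\Vert\mathbf{f};D\Vert$ for any fixed $\mathcal{O}_0\subset\subset\Omega_+$, since on such a set the weight $y-1$ is bounded away from zero and above. (b) There is a field $\mathbf{U}^{\varepsilon}$, lying in $\mathcal{D}^{\varepsilon,n}$ for every $n$, equal to $-\mathbf{e}_1$ on $S_{\varepsilon}$, supported in the fixed set $\mathcal{O}_0:=\overline{B(h/3)}\subset\subset\Omega_+$, with $\Vert\mathbf{U}^{\varepsilon};D\Vert\le\delta(\varepsilon)$ and $\delta(\varepsilon)\to0$. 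Such a field is $\mathbf{U}^{\varepsilon}=\nabla^{\bot}\psi$ with $\psi=-(y-(1+h))\,\theta(r)$, $r=|(x,y)-(0,1+h)|$, where $\theta$ is a logarithmic radial cut-off (smoothed at the two cut-off radii) equal to $1$ on a disk containing $S_{\varepsilon}$ and vanishing outside $B(h/3)$; since the Dirichlet energy of a logarithmic profile over the annulus is of order $1/|\log\varepsilon|$, one gets $\Vert\mathbf{U}^{\varepsilon};D\Vert^{2}\le C/|\log\varepsilon|$.

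For the bound on $\mathbf{u}$ I decompose $\mathbf{u}=\mathbf{U}^{\varepsilon}+\tilde{\mathbf{v}}$ with $\tilde{\mathbf{v}}:=\mathbf{u}-\mathbf{U}^{\varepsilon}$, which lies in $D_{0}^{\varepsilon,n}$ because it vanishes on $S_{\varepsilon}$ and outside $\Delta_n$. Testing \eqref{eq_nesttest} with $\mathbf{w}=\tilde{\mathbf{v}}$ gives $\Gamma(\tilde{\mathbf{v}})=0$, so the right-hand side vanishes. Substituting $\mathbf{u}=\mathbf{U}^{\varepsilon}+\tilde{\mathbf{v}}$ in the convective term and using the antisymmetry \eqref{eq_trilinearsym} to cancel $\int_{\Omega_+}[(\mathbf{u}+\mathbf{e}_1)\cdot\nabla\tilde{\mathbf{v}}]\cdot\tilde{\mathbf{v}}$, only $\int_{\Omega_+}[(\mathbf{u}+\mathbf{e}_1)\cdot\nabla\mathbf{U}^{\varepsilon}]\cdot\tilde{\mathbf{v}}$ survives, and its integrand is supported in $\mathcal{O}_0$. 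Estimating the linear part by Cauchy--Schwarz and this convective part by H\"older with exponents $(4,2,4)$, and using (a), (b), $\lambda\le1$ and $\Vert\mathbf{u};D\Vert\le\delta+\Vert\tilde{\mathbf{v}};D\Vert$, I obtain
\[\Vert\tilde{\mathbf{v}};D\Vert^{2}\le \delta\,\Vert\tilde{\mathbf{v}};D\Vert+C\delta\,\Vert\tilde{\mathbf{v}};D\Vert^{2}+C\delta\,\Vert\tilde{\mathbf{v}};D\Vert~.\]
Choosing $\varepsilon_1$ so that $C\delta(\varepsilon)\le 1/2$ for $\varepsilon<\varepsilon_1$ absorbs the quadratic term and yields $\Vert\tilde{\mathbf{v}};D\Vert\le C\delta$, whence $\Vert\mathbf{u};D\Vert\le C$ uniformly in $\lambda$ and $n$.

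For the force I fix smooth divergence-free fields $\mathbf{a}_1,\mathbf{a}_2$ supported in $\overline{B(2h/3)}\subset\subset\Omega_+$ (hence in every $\Delta_n$), equal to $\mathbf{e}_1,\mathbf{e}_2$ on $B(h/3)$ so that $\Gamma(\mathbf{a}_i)=\mathbf{e}_i$ (for instance $\mathbf{a}_1=\nabla^{\bot}((1-\chi)y)$ and the analogous field built from $x$), whose norms are absolute constants. Testing \eqref{eq_nesttest} with $\mathbf{w}=\mathbf{a}_i$ gives $\Sigma_i=-\int_{\Omega_+}\nabla\mathbf{u}:\nabla\mathbf{a}_i-\lambda\int_{\Omega_+}[(\mathbf{u}+\mathbf{e}_1)\cdot\nabla\mathbf{u}]\cdot\mathbf{a}_i$; bounding the first term by Cauchy--Schwarz and the second by Lemma \ref{lem_trilinear} (with $\mathcal{O}=\overline{B(2h/3)}$) together with (a) gives $|\mathbf{\Sigma}|\le C(\Vert\mathbf{u};D\Vert+\Vert\mathbf{u};D\Vert^{2})$. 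Combined with the previous step this yields $\Vert\mathbf{u};D\Vert+|\mathbf{\Sigma}|\le C$, uniformly in $\lambda\in[0,1]$, $n\in\mathbb{N}$ and $\varepsilon<\varepsilon_1$.

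The main obstacle is fact (b): the extension of the boundary datum must be simultaneously supported in a set independent of $n$ and of Dirichlet norm tending to $0$ with $\varepsilon$. Keeping the support inside $B(h/3)$ is what localizes the surviving convective term and lets the logarithmic smallness $\delta(\varepsilon)$ absorb the quadratic contribution; the minimal-norm extension $\mathbf{U}_{-\mathbf{e}_{1}}$ has comparably small norm but a gradient spread over all of $\Delta_n$, against which the $\mathbf{e}_1$ part of the convective integral over the growing domain cannot be controlled uniformly in $n$. A secondary point is that the antisymmetry \eqref{eq_trilinearsym} is applied to $\tilde{\mathbf{v}}\in D_{0}^{\varepsilon,n}$, whose support touches $\partial\Omega_+$; this is legitimate because such fields vanish on the wall and are $D$-limits of elements of $\mathcal{D}$, so the boundary contributions in the integration by parts drop.
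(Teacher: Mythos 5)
Your proof is correct, but it closes the a priori estimate by a different mechanism than the paper. The paper first sets $\mathbf{w}=\mathbf{u}$ in \eqref{eq_nesttest}, so that the convective term drops and the energy identity $\Vert\mathbf{u};D\Vert^{2}=\mathbf{\Sigma}\cdot\mathbf{e}_{1}$ reduces everything to a bound on the force; that bound is obtained by testing with a stream-function cut-off supported in a ball of radius $O(\varepsilon)$ around the obstacle, whose $D$- and $L^{\infty}$-norms are $O(|\mathbf{W}|)$ uniformly in $\varepsilon$, and the smallness that absorbs the quadratic term is the factor $\varepsilon$ in the scaled Poincar\'e inequality $\Vert\mathbf{u}+\mathbf{e}_{1};L^{2}({B(}2\delta(\varepsilon)))\Vert\le C_{2}\varepsilon\Vert\mathbf{u};D\Vert$ on that shrinking ball. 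You instead lift the boundary datum by a logarithmic-cut-off field of Dirichlet norm $O(|\log\varepsilon|^{-1/2})$ supported in the fixed set ${B(}h/3)$, test with the remainder $\tilde{\mathbf{v}}$ (killing the right-hand side since $\Gamma(\tilde{\mathbf{v}})=0$), and absorb the quadratic term using the logarithmic smallness; the force is then bounded separately with fixed test fields, essentially the paper's own $\mathbf{\tilde{U}}_{-\mathbf{e}_{1}}$ construction. Both arguments are sound --- your use of the antisymmetry \eqref{eq_trilinearsym} for $\tilde{\mathbf{v}}\in D_{0}^{\varepsilon,n}$, whose support touches the wall, is no more delicate than the paper's own derivation of \eqref{eq_NRJtrunc} with $\mathbf{w}=\mathbf{u}$. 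The trade-off: the paper's route is shorter, since the energy identity makes the $D$-bound free once $\mathbf{\Sigma}$ is controlled, and it yields the explicit threshold $\varepsilon_{1}=1/(2C_{1}C_{2})$; your route requires the extra lifting construction but delivers the stronger quantitative conclusion $\Vert\mathbf{u};D\Vert\le C|\log\varepsilon|^{-1/2}\to0$, which would also give Lemma \ref{lem_cvg0aux} directly, bypassing the compactness-and-contradiction argument the paper uses for that step.
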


{Because of the Leray-Schauder theory, this {lemma}} implies {\bf Lemma \ref%
{lemexistence}}. Then, {\bf Lemma \ref{lemuniformestimate}} is proved assuming 
$\varepsilon <\varepsilon_{1} $ and applying this proposition to the
constructed approximate solution (in this case $\lambda=1$).%

\bigskip

\begin{proof}[Proof of Proposition \protect\ref{both}]
First we note that given $(\mathbf{u},\mathbf{\Sigma },n,\lambda )$ as in
{\bf Proposition \ref{both}} we can set $\mathbf{w}=\mathbf{u}$ in (\ref%
{eq_nesttest}), and we obtain (\ref{eq_NRJtrunc}). Hence, it suffices to
find a bound on $\mathbf{\Sigma }$. For this purpose, we introduce an
additional family of cut-off functions $\chi _{\delta }$. This family
truncates in balls around the point $(0,1+h)$. Namely, let $\zeta \colon 
\mathbb{R}\rightarrow \mathbb{R}$ be a smooth function such that 
\begin{equation}
{\zeta }(s)=1~,\quad \forall \ s<0~,\quad {\zeta }(s)=0~,\quad \forall \
s>1~.  \label{zeta}
\end{equation}%
Then, given $0<\delta <h/3$, we set for $(x,y)\in \Omega _{+}$,%
\begin{equation}
\chi _{\delta }(x,y)={\zeta }\left( \dfrac{|(x,y-1-h)|}{\delta }-1\right) ~.
\label{eq_nruncfunction}
\end{equation}%
With this definition, we have $\chi _{\delta }=1$ in ${B(}\delta )$ while $%
\chi _{\delta }=0$ in the exterior of ${B(}2\delta )$. Now, given $(\mathbf{u%
},\mathbf{\Sigma },n,\lambda )$ and an obstacle $S_{\varepsilon }$, we set {$%
\delta (\varepsilon )=\lambda _{0}\ {\varepsilon },$ with $\lambda _{0}=\sup
\{|(x,y)|,~(x,y)\in S\}$}, and define, for arbitrary $\mathbf{W}\in \mathbb{R%
}^{2}$, the test-function: 
\begin{equation}
\mathbf{w}_{\varepsilon }=-\mathbf{\nabla }^{\bot }\left( \chi _{\delta
(\varepsilon )}(x,y)~\left[ \mathbf{W}^{\bot }\cdot ((x,y)-(0,1+h))\right]
\right) ~.  \label{cf}
\end{equation}%
{Since $S_{\varepsilon }$ tends homothetically to a point when $\varepsilon
\rightarrow 0,$ we can choose $\varepsilon _{0}$ (say $\varepsilon
_{0}=h/(3\lambda _{0})$ for instance) such that $\mathbf{w}_{\varepsilon }$
is equal to $\mathbf{W}$ on $S_{\varepsilon }$ and equal to zero outside 
$B(2h/3)$ for $\varepsilon <\varepsilon _{0}.$} Thus, we can use $\mathbf{w}%
_{\varepsilon }$ as a test-function in (\ref{eq_nesttest}). By construction
of $\mathbf{w}_{\varepsilon }$, there exists a { universal} constant $C_{1}$ such that 
\begin{equation}
\Vert \mathbf{w}_{\varepsilon };D\Vert +\Vert \mathbf{w}_{\varepsilon
};L^{\infty }(\mathbb{R}^{2})\Vert \leq C_{1}|\mathbf{W}|~,  \label{web}
\end{equation}%
and we get from (\ref{eq_nesttest}) the inequality%
\begin{eqnarray*}
|\mathbf{\Sigma }\cdot \mathbf{W}| &\leq &\Vert \mathbf{u};D\Vert ~\Vert 
\mathbf{w}_{\varepsilon };D\Vert +\lambda \Vert \mathbf{u}+\mathbf{e}%
_{1};L^{2}({B(}2\delta (\varepsilon )))\Vert ~\Vert \mathbf{u};D\Vert ~\Vert 
\mathbf{w}_{\varepsilon };L^{\infty }(\Omega _{+})\Vert  \\
&\leq &C_{1}|\mathbf{W}|~\left( \Vert \mathbf{u};D\Vert +\Vert \mathbf{u}+%
\mathbf{e}_{1};L^{2}({B(}2\delta (\varepsilon )))\Vert ~\Vert \mathbf{u}%
;D\Vert \right) ~.
\end{eqnarray*}%
Since $\mathbf{u}_{|_{S_{\varepsilon }}}=-\mathbf{e}_{1}$, Poincar\'{e}'s
inequality implies that there exists a constant $\widetilde{C_{2}}~$ such
that 
\begin{equation*}
\Vert \mathbf{u}+\mathbf{e}_{1};L^{2}({B(}{ 2\delta(\varepsilon)} ))\Vert \leq 
\widetilde{C_{2}}~\Vert \mathbf{u};D\Vert ~.
\end{equation*}%
A scaling argument shows $\widetilde{C_{2}}=\varepsilon C_{2}$ with a
constant $C_{2}$ independent of $\varepsilon $ and $\mathbf{u}$ {(see \cite[Exercise 4.10]{Galdi94} for a construction
of $C_2$)}. Therefore, 
\begin{equation}
|\mathbf{\Sigma }|\leq C_{1}\left( \Vert \mathbf{u};D\Vert +C_{2}\varepsilon
\Vert \mathbf{u};D\Vert ^{2}\right) ~.  \label{eq_Si}
\end{equation}%
From (\ref{eq_Si}) and (\ref{eq_NRJtrunc}) we find that {if $\varepsilon $
satisfies moreover $\varepsilon <1/(2C_{1}C_{2})$}, we have a bound on $|%
\mathbf{\Sigma }|$ and $\Vert \mathbf{u};D\Vert $ which is independent of $n$%
, $\lambda $, and $\varepsilon $, namely 
\begin{equation}
\Vert \mathbf{u};D\Vert \leq |\mathbf{\Sigma }|^{1/2}\leq 2C_{1}~.
\label{un1}
\end{equation}
\end{proof}

\subsection{Weak solutions for obstacles of vanishing size}

From now on, we choose once and for all $(\mathbf{u}_{\varepsilon
})_{\varepsilon <\varepsilon _{1}}$ a bounded family of $D$ such that $%
\mathbf{u}_{\varepsilon }$ is a weak solution of Problem 1 for $%
S_{\varepsilon }$ for all $\varepsilon <\varepsilon _{1}$. Such a sequence
exists according to {\bf Theorem \ref{thm_existencews}}. In this section we 
complete the proof of {\bf Theorem \ref{thm_cvg0main}} by 
showing that the sequence $(\mathbf{u}_{\varepsilon })_{\varepsilon <\varepsilon
_{1}}$ converges to zero when the size of the obstacle tends to zero. As a
by-product, we also obtain the pressure ${p}$ associated with a weak solution.

The convergence is proved in the family of spaces $(\mathcal{C}^{m}(\mathcal{%
A}_n))_{(m,n) \in \mathbb{N}^2}$. We refer the reader to {Section \ref%
{sec_functionspaces} for the definition of sets $({\mathcal{A}}_n)_{n\in 
\mathbb{N}}$.} We recall here that they satisfy the following fundamental
properties

\begin{itemize}
\item for all $n \in \mathbb{N},$ $\mathcal{A}_n \subset \overline{\mathcal{A%
}_n} \subset \mathcal{A}_{n+1},$

\item for all $n \in \mathbb{N},$ $\mathcal{A}_n$ has a smooth boundary,

\item $\bigcup_{n\in \mathbb{N}} \mathcal{A}_n = \Omega_+ \setminus (0,1+h).$
\end{itemize}

\textbf{Theorem \ref{thm_cvg0main}} is a straightforward consequence of the
following two lemmas which we prove in the following subsections.

\begin{lemma}
\label{lem_cvg0aux} Given $\eta >0$ there exists {$0<\varepsilon_\eta $}
such that $\Vert \mathbf{u}_{\varepsilon };D\Vert \leq \eta $ for all $%
\varepsilon <{ \varepsilon_\eta }$.
\end{lemma}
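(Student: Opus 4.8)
The plan is to exploit the energy inequality \eqref{eq_NRJ} and to reduce the claim to showing that the drag $\mathbf{\Sigma}_{\varepsilon}\cdot\mathbf{e}_{1}$ associated with $\mathbf{u}_{\varepsilon}$ tends to $0$. Since $\mathbf{u}_{\varepsilon}$ is a weak solution we have $\Vert\mathbf{u}_{\varepsilon};D\Vert^{2}\leq\mathbf{\Sigma}_{\varepsilon}\cdot\mathbf{e}_{1}$, and inserting in the weak formulation \eqref{eq_wf} any $\mathbf{w}_{\varepsilon}\in\mathcal{D}^{\varepsilon}$ with $\Gamma(\mathbf{w}_{\varepsilon})=\mathbf{e}_{1}$ gives $\mathbf{\Sigma}_{\varepsilon}\cdot\mathbf{e}_{1}=-\int_{\Omega_{+}}\nabla\mathbf{u}_{\varepsilon}:\nabla\mathbf{w}_{\varepsilon}-\int_{\Omega_{+}}[(\mathbf{u}_{\varepsilon}+\mathbf{e}_{1})\cdot\nabla\mathbf{u}_{\varepsilon}]\cdot\mathbf{w}_{\varepsilon}$. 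It therefore suffices to produce, for each small $\varepsilon$, such a test field whose $D$-, $L^{2}$- and $L^{4}$-norms on a fixed ball are small. The gain comes from the fact that in two dimensions a point has vanishing capacity, so that the value $\mathbf{e}_{1}$ can be imposed on the shrinking obstacle $S_{\varepsilon}$ at arbitrarily small Dirichlet cost.

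Concretely, I would set $\tilde{\psi}(x,y)=y-(1+h)$, so that $\nabla^{\bot}\tilde{\psi}=\mathbf{e}_{1}$ while $\tilde{\psi}$ vanishes at the centre $(0,1+h)$, and take a smooth radial logarithmic cut-off $\chi_{\varepsilon}$ equal to $1$ on $B(2\lambda_{0}\varepsilon)\supset S_{\varepsilon}$, equal to $0$ outside $B(h/2)$, and proportional to $\log(h/(2r))$ on the annulus in between. Define $\mathbf{w}_{\varepsilon}=\nabla^{\bot}(\chi_{\varepsilon}\,\tilde{\psi})$. Then $\mathbf{w}_{\varepsilon}\in\mathcal{D}^{\varepsilon}$, it equals $\mathbf{e}_{1}$ on $S_{\varepsilon}$ (hence $\Gamma(\mathbf{w}_{\varepsilon})=\mathbf{e}_{1}$), it is supported in $B(h/2)\subset\subset\Omega_{+}$, and expanding $\nabla^{\bot}(\chi_{\varepsilon}\tilde{\psi})=\chi_{\varepsilon}\mathbf{e}_{1}+(\nabla^{\bot}\chi_{\varepsilon})\,\tilde{\psi}$ one checks, using on the annulus the bounds $|\tilde{\psi}|\leq Cr$, $|\nabla\chi_{\varepsilon}|\leq C(rL_{\varepsilon})^{-1}$ and $|\nabla^{2}\chi_{\varepsilon}|\leq C(r^{2}L_{\varepsilon})^{-1}$ with $L_{\varepsilon}:=\log(h/(2\lambda_{0}\varepsilon))$, that
\[
\Vert\mathbf{w}_{\varepsilon};D\Vert+\Vert\mathbf{w}_{\varepsilon};L^{2}(B(h/2))\Vert+\Vert\mathbf{w}_{\varepsilon};L^{4}(B(h/2))\Vert\leq\frac{C}{\sqrt{L_{\varepsilon}}}\longrightarrow0\qquad(\varepsilon\to0).
\]
This two-dimensional capacity estimate is the heart of the argument.

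It then remains to insert $\mathbf{w}_{\varepsilon}$ into the identity for $\mathbf{\Sigma}_{\varepsilon}\cdot\mathbf{e}_{1}$ and to estimate the two terms. The linear term is bounded by $\Vert\mathbf{u}_{\varepsilon};D\Vert\,\Vert\mathbf{w}_{\varepsilon};D\Vert$ by Cauchy--Schwarz, and the trilinear term by \eqref{eq_trilinear} of Lemma \ref{lem_trilinear} (with $\mathcal{O}=B(h/2)$) by $C\big(\Vert\mathbf{u}_{\varepsilon};L^{4}(B(h/2))\Vert\,\Vert\mathbf{u}_{\varepsilon};D\Vert\,\Vert\mathbf{w}_{\varepsilon};L^{4}(B(h/2))\Vert+\Vert\mathbf{u}_{\varepsilon};D\Vert\,\Vert\mathbf{w}_{\varepsilon};L^{2}(B(h/2))\Vert\big)$. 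Since the family is uniformly bounded in $D$ by Theorem \ref{thm_existencews}, so that $\Vert\mathbf{u}_{\varepsilon};D\Vert\leq K$ and, by Hardy's inequality and the Sobolev embedding on the fixed set $B(h/2)$, also $\Vert\mathbf{u}_{\varepsilon};L^{4}(B(h/2))\Vert\leq CK$, every factor carrying $\mathbf{w}_{\varepsilon}$ tends to $0$. Hence $\Vert\mathbf{u}_{\varepsilon};D\Vert^{2}\leq\mathbf{\Sigma}_{\varepsilon}\cdot\mathbf{e}_{1}\leq C'/\sqrt{L_{\varepsilon}}\to0$, and given $\eta>0$ it suffices to choose $\varepsilon_{\eta}$ so small that the right-hand side is at most $\eta^{2}$.

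The main obstacle is precisely the capacity estimate of the second paragraph: one must verify that the logarithmic profile makes $\Vert\mathbf{w}_{\varepsilon};D\Vert$ and the low-order norms small \emph{simultaneously}, which is the mechanism (absence of a surviving point force in the vanishing-obstacle limit) behind the decay of the solution. Everything else is a uniform bound already in hand (Theorem \ref{thm_existencews}) combined with the continuity estimate of Lemma \ref{lem_trilinear}. Alternatively, one may argue by contradiction: a sequence $\mathbf{u}_{\varepsilon_{k}}$ bounded away from $0$ in $D$ would converge weakly in $D$ to a finite-energy solution of the wall-bounded problem without obstacle, whose associated force is forced to vanish by the same capacity argument, after which \eqref{eq_NRJ} yields strong convergence to $0$ and the desired contradiction.
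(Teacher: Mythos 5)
Your argument is correct, but it is not the paper's proof. The paper argues by contradiction: it extracts a subsequence $\mathbf{u}_{\varepsilon_i}\rightharpoonup\mathbf{u}$ weakly in $D$ with $\mathbf{\Sigma}_{\varepsilon_i}\to\mathbf{\Sigma}$, passes to the limit in the weak formulation against the \emph{fixed-scale} test function $\mathbf{w}_{\delta}$ of (\ref{cf}) (whose $D$- and $L^{\infty}$-norms are bounded by $C_{1}|\mathbf{W}|$ uniformly in $\delta$ but do \emph{not} tend to zero), and then lets $\delta\to0$ using the absolute continuity of $\int|\nabla\mathbf{u}|^{2}$ near the point $(0,1+h)$ to conclude $\mathbf{\Sigma}=0$, whence $\Vert\mathbf{u}_{\varepsilon_i};D\Vert^{2}\le\mathbf{\Sigma}_{\varepsilon_i}\cdot\mathbf{e}_{1}\to0$. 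You instead make the zero-capacity mechanism quantitative: by replacing the plateau cut-off with a logarithmic profile on the annulus $2\lambda_{0}\varepsilon<r<h/2$ you obtain a test function in $\mathcal{D}^{\varepsilon}$ with $\Gamma(\mathbf{w}_{\varepsilon})=\mathbf{e}_{1}$ and all relevant norms of order $L_{\varepsilon}^{-1/2}$, which combined with (\ref{eq_wf}), (\ref{eq_NRJ}), Lemma \ref{lem_trilinear}, Hardy's inequality and the uniform bound of Theorem \ref{thm_existencews} gives directly $\Vert\mathbf{u}_{\varepsilon};D\Vert^{2}\le\mathbf{\Sigma}_{\varepsilon}\cdot\mathbf{e}_{1}\le C/\sqrt{\log(1/\varepsilon)}$. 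Your computation checks out (the only points needing care are to mollify the logarithmic profile so that $\mathbf{w}_{\varepsilon}$ is genuinely in $\mathcal{D}^{\varepsilon}$, and to note that $\Vert\mathbf{u}_{\varepsilon};L^{4}(B(2h/3))\Vert$ is controlled via Hardy plus the Sobolev embedding on the fixed ball, both of which you address). What your route buys is an explicit, if only logarithmic, rate of convergence and the avoidance of any compactness extraction; what the paper's route buys is that it never needs the sharp capacity-optimal test function, at the price of being purely qualitative. The "alternative by contradiction" you sketch at the end is essentially the paper's argument, except that the paper does not identify the weak limit as a solution of the obstacle-free problem but simply tests the limiting identity at scale $\delta$ and sends $\delta\to0$.
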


{
\begin{lemma}
\label{lem_bootstrap}Let $(n,m)\in \mathbb{N}^2$ and $\varepsilon < \varepsilon_1 $ such that ${ S_{\varepsilon }\subset \subset [ \ \mathbb  R^2_+ \setminus \overline{\mathcal{A}_{n+m+1}}} \ ]$. 
Then, there exists a
constant $C_{m,n}$, depending only on $m$ and $n,$ for which any weak
solution $\mathbf{u}$ of Problem 1 for $S_{\varepsilon }$ such that $\Vert 
\mathbf{u};D\Vert \leq 1$ satisfies 
\begin{enumerate}
\item[(i)] there exists a pressure $p$ such that $(\mathbf{u},p)$ is solution to \eqref{Problem2} 
\item[(ii)] the following estimate holds true 
\begin{equation}
\Vert \mathbf{u};H{^{m+1}(}\mathcal{A}_{n}{)}\Vert +\Vert p;H{^{m}(\mathcal{A%
}_{n})/\mathbb{R}}\Vert \leq C_{m,n}~\Vert \mathbf{u};D\Vert ~.
\label{eq_estCm1}
\end{equation}
\end{enumerate}
\end{lemma}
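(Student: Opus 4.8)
The plan is to use interior elliptic regularity for the stationary Stokes (Oseen) operator, recovering the pressure first and then bootstrapping regularity on the nested sets $\mathcal{A}_n$. The key observation is that, because $S_\varepsilon \subset\subset \mathbb{R}^2_+ \setminus \overline{\mathcal{A}_{n+m+1}}$, the obstacle does not meet a fixed open neighborhood of $\overline{\mathcal{A}_n}$; on that neighborhood $\mathbf u$ is a genuine (distributional) solution of the homogeneous system \eqref{Problem2} with no boundary contribution from $S_\varepsilon$, so all the constants that appear will be geometric and independent of $\varepsilon$. First I would fix an intermediate open set $\mathcal{O}$ with $\mathcal{A}_n \subset\subset \mathcal{O} \subset\subset \mathcal{A}_{n+1}$, lying in $\Omega_+\setminus\overline{S_\varepsilon}$, and work there.

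Step (i), existence of the pressure, is standard: from the weak formulation \eqref{eq_wf} tested against $\mathbf w \in \mathcal{D}^\varepsilon$ supported in $\mathcal{O}$, the functional $\mathbf w \mapsto \int \nabla\mathbf u : \nabla \mathbf w + \int [(\mathbf u+\mathbf e_1)\cdot\nabla \mathbf u]\cdot \mathbf w$ vanishes on divergence-free test fields, hence by de Rham's theorem (see \cite[Chapter III]{Galdi98}) there exists $p \in L^2_{loc}(\mathcal{O})$, unique up to a constant, such that $(\mathbf u, p)$ solves \eqref{Problem2} in the sense of distributions on $\mathcal{O}$. The bound on the trilinear term in \textbf{Lemma \ref{lem_trilinear}} guarantees the forcing term $(\mathbf u + \mathbf e_1)\cdot\nabla\mathbf u$ sits in the correct negative Sobolev space for this to apply.

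Step (ii) is the regularity bootstrap. I would rewrite \eqref{Problem2} on $\mathcal{O}$ as a \emph{linear} Stokes system $-\Delta\mathbf u + \nabla p = \mathbf F$, $\nabla\cdot\mathbf u = 0$, with right-hand side $\mathbf F = -(\mathbf u+\mathbf e_1)\cdot\nabla\mathbf u - \partial_x\mathbf u$, and apply interior Stokes regularity (\cite[Theorem IV.4.1]{Galdi98}) on the telescoping chain of sets between $\mathcal{A}_n$ and $\mathcal{A}_{n+1}$. Starting from $\mathbf u \in H^1$ (which is all the $D$-norm bound provides), the base case treats $\mathbf F \in H^{-1}$, yielding $\mathbf u \in H^1$, $p \in L^2$ with the estimate controlled by $\Vert \mathbf u; D\Vert$ after absorbing the quadratic nonlinearity via the Sobolev embedding $H^1 \hookrightarrow L^4$ in two dimensions together with $\Vert \mathbf u; D\Vert \le 1$. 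For the inductive step, once $\mathbf u \in H^{k+1}$ on a slightly larger set, the product $(\mathbf u+\mathbf e_1)\cdot\nabla\mathbf u$ lands in $H^k$ (using that $H^{k+1}$ is a Banach algebra in dimension two for $k\ge 1$, and handling $k=0$ separately through $L^4$), so Stokes regularity upgrades $\mathbf u$ to $H^{k+2}$ and $p$ to $H^{k+1}$ on the next set in. Iterating $m$ times and telescoping the estimates through the finitely many intermediate sets gives \eqref{eq_estCm1} with a constant $C_{m,n}$ depending only on $m$, $n$ and the fixed geometry.

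The main obstacle is ensuring the constant $C_{m,n}$ is genuinely independent of $\varepsilon$. This hinges entirely on the hypothesis $S_\varepsilon \subset\subset \mathbb{R}^2_+\setminus\overline{\mathcal{A}_{n+m+1}}$, which keeps the obstacle at a fixed positive distance from the region where regularity is claimed, so that every cut-off function and every interior-estimate constant in the telescoping chain is fixed in advance by the sets $\mathcal{A}_n,\dots,\mathcal{A}_{n+m+1}$ and never sees $S_\varepsilon$. A secondary technical point is the treatment of the quadratic nonlinearity at the bottom of the bootstrap, where one only has $H^1$ regularity: here the two-dimensional embedding $H^1\hookrightarrow L^4$ and the normalization $\Vert\mathbf u;D\Vert \le 1$ are exactly what is needed to keep $\mathbf F$ in $H^{-1}$ with a controlled norm and to launch the induction.
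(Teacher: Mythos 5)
Your overall architecture (recover the pressure by de Rham/Galdi, then bootstrap linear Stokes regularity on a nested chain of sets kept at fixed distance from $S_{\varepsilon}$, which is why the constants are independent of $\varepsilon$) is the same as the paper's, and your steps (i) and the inductive steps with $k\geq 1$ are sound: for $k\geq1$ one has $H^{k+1}(\mathcal{O})\hookrightarrow L^{\infty}$ in two dimensions and $H^{k+1}\cdot H^{k}\subset H^{k}$, so the nonlinearity does land in $H^{k}$ as you claim.

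The genuine gap is at the very first upgrade, from $H^{1}$ to $H^{2}$, which you propose to handle ``through $L^{4}$''. With only $\mathbf{u}\in H^{1}\hookrightarrow L^{4}$ and $\nabla\mathbf{u}\in L^{2}$, H\"older gives $(\mathbf{u}+\mathbf{e}_{1})\cdot\nabla\mathbf{u}\in L^{4/3}$, \emph{not} $L^{2}$. The embedding $H^{1}\hookrightarrow L^{4}$ suffices to put the forcing in $H^{-1}$, but that only reproduces the base case $\mathbf{u}\in H^{1}$, $p\in L^{2}$; it does not ``launch the induction'', because the $L^{2}$-based Stokes estimate that would yield $\mathbf{u}\in H^{2}$ requires the forcing in $L^{2}$. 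The paper bridges exactly this point with an extra intermediate step: after the $H^{1}\times L^{2}$ estimate, it truncates $(\mathbf{u},p)$ and applies $L^{q}$ Stokes regularity with $q=3/2<2$ (for which $L^{4/3}\subset L^{q}$ does hold), obtaining $\mathbf{u}\in W^{2,q}\hookrightarrow L^{\infty}$ on a slightly smaller set with norm controlled by $\Vert\mathbf{u};D\Vert$. This $L^{\infty}$ bound is then carried through every inductive step to estimate $\Vert\mathbf{u}\cdot\nabla\mathbf{u};H^{k}\Vert$ by $\Vert\mathbf{u};L^{\infty}\Vert\,\Vert\mathbf{u};H^{k+1}\Vert+\Vert\mathbf{u};H^{k+1}\Vert^{2}$, which in particular makes the $k=0\to k=1$ step work. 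Your plan needs either this $L^{q}$ detour (or an equivalent fractional half-step such as $L^{4/3}\hookrightarrow H^{-1/2}$ followed by $H^{3/2}\hookrightarrow L^{\infty}$); as written, the bootstrap stalls at $H^{1}$. A second, cosmetic, remark: the paper's ``interior'' estimates are implemented by multiplying $(\mathbf{u},p)$ by a cut-off, which produces a Stokes problem with nonzero divergence $\tilde g=\mathbf{u}\cdot\nabla\chi$ and homogeneous Dirichlet data; if you quote interior-regularity theorems directly you avoid having to track $\tilde g$, but you should then make sure the version you cite also bounds the pressure modulo constants, since \eqref{eq_estCm1} requires the $H^{m}(\mathcal{A}_{n})/\mathbb{R}$ norm of $p$.
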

}

\bigskip

\noindent \textit{Remark:}

\noindent One might be tempted to assume that {the smallness estimate} of
{\bf Theorem \ref{thm_cvg0main}} is straightforward, since the fluid is moving
only due to the no-slip boundary condition on $\partial S_{\varepsilon }$.
The smaller the body, the smaller should be the fluid flow which is induced
by this boundary condition so that the flow should be zero in the limit
of a body of vanishing size. The following scaling argument shows that,
because of the Stokes paradox, things are not quite as simple. Let $(\mathbf{%
u}_{\varepsilon })_{\varepsilon >0}$ a family of weak solutions for $%
S_{\varepsilon }$ and $\Omega _{+}^{\varepsilon }=\{(x,y)\in \mathbb{R}%
^{2}\left\vert {}\right. (\varepsilon x,\varepsilon y-(h+1))\in \Omega
_{+}\} $, and let $\mathbf{v}_{\varepsilon }(x,y)=\mathbf{u}_{\varepsilon
}(\varepsilon x,\varepsilon y-(h+1))$ and ${q_{\varepsilon }}%
(x,y)=p_{\varepsilon }(\varepsilon x,\varepsilon y-(h+1))$. This scaling
does not affect the $D-$norm, so that $\Vert \nabla \mathbf{v}_{\varepsilon
};L^{2}(\Omega _{+}^{\varepsilon })\Vert =\Vert \nabla \mathbf{u}%
_{\varepsilon };L^{2}({\Omega }_{+})\Vert $. Therefore, {if} the { family} $(%
\mathbf{u}_{\varepsilon })_{\varepsilon >0}$ is bounded in $D$, the { family} 
$(\Vert \nabla \mathbf{v}_{\varepsilon };L^{2}(\Omega _{+}^{\varepsilon
})\Vert )_{\varepsilon >0}$ is also bounded, and the functions $\mathbf{v}%
_{\varepsilon }$ satisfy in $\Omega _{+}^{\varepsilon }$ the equation%
\begin{equation}
\begin{array}{r}
-\varepsilon \left( \mathbf{v}_{\varepsilon }\cdot \nabla \right) \mathbf{v}%
_{\varepsilon }-\varepsilon \partial _{x}\mathbf{v}_{\varepsilon }+\Delta 
\mathbf{v}_{\varepsilon }-{\nabla q_{\varepsilon }}=0~, \\ 
\nabla \cdot \mathbf{v}_{\varepsilon }=0~,%
\end{array}%
\end{equation}%
with$\,$the boundary condition $\mathbf{v}_{\varepsilon }$ $=-\mathbf{e}_{1}$
on $\partial S_{1}$. Using the same line of arguments as in the previous
section we can therefore extract a subsequence converging in the topology
induced by the $D-$norm to some function $\mathbf{v}$ for which $\Vert
\nabla \mathbf{v};L^{2}(\mathbb{R}^{2})\Vert $\textbf{\ }is finite and which
solves the Stokes equations in $\mathbb{R}^{2}\setminus \overline{S_{1}}$
with the boundary condition $\mathbf{v}$ $=-\mathbf{e}_{1}$ on $\partial
S_{1}$. By the Stokes paradox this implies that $\mathbf{v=}-\mathbf{e}_{1}$
(see \cite[Theorem 2.2 p. 253]{Galdi98}), and therefore the sequence $%
\mathbf{v}_{\varepsilon }$ does not converge to zero with $\varepsilon $.
Note that this remark does not contradict {\bf Theorem \ref{thm_cvg0main}}. It
only means that, when $\varepsilon $ goes to zero, $\mathbf{v}_{\varepsilon
} $ does take values close to $-\mathbf{e}_{1}$ on a part of the domain that
increases in size and covers eventually all of $\mathbb{R}^{2}$. The
diameter of this region remains however small compared with $1/\varepsilon $%
, and its size therefore converges to zero in the un-scaled variables.

\subsubsection{Proof of Lemma \protect\ref{lem_cvg0aux}}

We prove {\bf Lemma \ref{lem_cvg0aux}} by contradiction. First, we assume that there exists $%
\eta _{0}>0$, sequences $(\varepsilon _{i})_{_{i\in \mathbb{N}}}\in
(0,\varepsilon _{1})^{\mathbb{N}}$ and $(\mathbf{u}_{i},\mathbf{\Sigma }%
_{i})_{i\in \mathbb{N}}\in (D\times \mathbb{R}^{2})^{\mathbb{N}}$ such that $%
\lim\limits \varepsilon _{i}=0$, that $\mathbf{u}_{i}=%
\mathbf{u}_{\varepsilon _{i}}$ has 
associated force $\mathbf{\Sigma }_{i}$ and is such that $\Vert \mathbf{u}_{i};D\Vert
\geq \eta _{0}$ for all $i\in \mathbb{N}$. Then, the sequence $(\mathbf{u}%
_{i},\mathbf{\Sigma }_{i})_{i\in \mathbb{N}}$ is bounded. This implies the
existence of a pair $(\mathbf{u},\mathbf{\Sigma })\in D\times \mathbb{R}^{2}$
and of a subsequence $(\mathbf{u}_{i_{j}},\mathbf{\Sigma }_{i_{j}})_{j\in 
\mathbb{N}}$ such that $\mathbf{u}_{i_{j}}\rightharpoonup _{j\rightarrow \infty }%
\mathbf{u}$ weakly in $D$ and such that $\mathbf{\Sigma }_{i_{j}}\rightarrow
_{j\rightarrow \infty }\mathbf{\Sigma }$ strongly in $\mathbb{R}^{2}$.

\medskip

We now proceed as in the proof of {\bf Proposition \ref{both}}. Let $\chi _{\delta
}$ be the cut-off function defined in (\ref{eq_nruncfunction}) and define,
as in (\ref{cf}) for $0<\delta <h/3$ and arbitrary $\mathbf{W}\in \mathbb{R}%
^{2}$ the test-function $\mathbf{w}_{\delta }$ 
\begin{equation*}
\mathbf{w}_{\delta }(x,y)=-\nabla ^{\bot }\left( \chi _{\delta }(x,y)~\left[ 
\mathbf{W}^{\bot }\cdot ((x,y)-(0,1+h))\right] \right) ~,
\end{equation*}%
which has support in ${B(}{ 2h/3})$. As in (\ref{web}) there exists a { universal} constant $%
C_{1}<\infty $ such that 
\begin{equation}
\Vert \mathbf{w}_{\delta };D\Vert +\Vert \mathbf{w}_{\delta };L^{\infty }({%
\Omega }_{+})\Vert \leq C_{1}|\mathbf{W}|~.
\end{equation}%
Since $\lim \varepsilon _{i}=0$, there exists $i_{\delta }$ such that for $%
i\geq i_{\delta }$ the function $\mathbf{w}_{\delta }$ is an admissible
test-function, and we have : 
\begin{equation*}
\int_{\Omega _{+}}\nabla \mathbf{u}_{i}:\nabla \mathbf{w}_{\delta }~d\mathbf{%
x}+\int_{\Omega _{+}}[(\mathbf{u}_{i}+\mathbf{e}_{1})\cdot \nabla \mathbf{u}%
_{i}]\cdot \mathbf{w}_{\delta }~d\mathbf{x}=-\mathbf{\Sigma }_{i}\cdot 
\mathbf{W}~.
\end{equation*}%
In the limit as $i$ goes to infinity we therefore get 
\begin{equation*}
\int_{\Omega _{+}}\nabla \mathbf{u}:\nabla \mathbf{w}_{\delta }~d\mathbf{x}%
+\int_{\Omega _{+}}[(\mathbf{u}+\mathbf{e}_{1})\cdot \nabla \mathbf{u}]\cdot 
\mathbf{w}_{\delta }~d\mathbf{x}=-\mathbf{\Sigma }\cdot \mathbf{W}~,
\end{equation*}%
so that,%
\begin{equation*}
|\mathbf{\Sigma }|\leq C_{1}\Vert \nabla \mathbf{u};L^{2}({B(}{ 2\delta/3}))\Vert %
\left[ 1+\Vert \mathbf{u}+\mathbf{e}_{1};L^{2}({B(}{ 2\delta/3}))\Vert \right] ~.
\end{equation*}%
Letting $\delta $ go to $0$, yields $\mathbf{\Sigma }=0$,\textit{\ i.e.}, $%
\mathbf{\Sigma }_{i}\rightarrow _{i\rightarrow \infty }0$ which by the
energy estimate (\ref{eq_NRJ}) implies that $\lim \Vert \mathbf{u}%
_{i};D\Vert =0$, in contradiction with our assumption.

\subsubsection{\label{sec_WS3}Proof of Lemma \protect\ref{lem_bootstrap}}
Let $(n,m)\in \mathbb{N}^{2}$ and $\varepsilon ,\mathbf{u}$ be given as in %
{\bf Lemma \ref{lem_bootstrap}}.{\ At first, we recall how to construct the
pressure associated with $\mathbf u$.} We test (\ref{eq_wf}) with smooth divergence free vector-fields having
compact support in $\Omega _{+}\setminus \overline{S_{\varepsilon }}$. This
shows that $\mathbf{u}$ is a generalized solution in the sense of {\cite[Definition IV.1.1, p. 185]{Galdi94}} of the Stokes equation in $\Omega
_{+}\setminus \overline{S_{\varepsilon }}$ with source term%
\begin{equation*}
\mathbf{f}=(\mathbf{u}+\mathbf{e}_{1})\cdot \nabla \mathbf{u}~.
\end{equation*}%
Since, for all $\Omega ^{\prime }\subset \subset (\Omega _{+}\setminus 
\overline{S_{\varepsilon }})$, we have $\mathbf{f}\in H^{-1}(\Omega ^{\prime
})$ with the bound 
\begin{equation}
\Vert \mathbf{f};H^{-1}(\Omega ^{\prime })\Vert \leq C(\Omega ^{\prime })%
\left[ \Vert \mathbf{u};H^{1}(\Omega ^{\prime })\Vert^2 +\Vert \mathbf{u}%
;D\Vert \right]   \label{eq_controlf}
\end{equation}%
we can apply { \cite[lemma IV.1.1,p. 186]{Galdi94}} to construct a function $%
p\in L_{loc}^{2}(\Omega _{+}\setminus \overline{S_{\varepsilon }})$ such
that, in the sense of distributions, 
\begin{equation}
\left\{ 
\begin{array}{rcl}
\Delta \mathbf{u}-\nabla p & = & \mathbf{f}~, \\ 
\nabla \cdot \mathbf{u} & = & 0~,%
\end{array}%
\right. 
\end{equation}%
in $\Omega _{+}\setminus \overline{S_{\varepsilon }}$.  Classically, this pressure $p$ is unique up to
a finite number of  constants (equal to the number of connected components of $\Omega_+ \setminus \overline{S_{\varepsilon}}$). We call $p$ the \textit{pressure associated with }$\mathbf{u}$ and we indeed have that $(\mathbf{u},p)$ satisfies \eqref{Problem2}.

\bigskip

The remainder of {\bf Lemma \ref{lem_bootstrap}} is {obtained via an induction
argument (with respect to $m \in \mathbb N$). Namely, we prove that, for all $k\leq m$ the following statement
holds true:\\[4pt]
There exist  constants $C_{m,k}$ depending only on $m$ and $k$ such that : 
\begin{equation}
\Vert \mathbf{u};H{^{k+1}(}\mathcal{A}_{n+m-k}{)}\Vert +\Vert p;H{^{k}(%
\mathcal{A}_{n+m-k})/\mathbb{R}}\Vert \leq C_{m,k}~\Vert \mathbf{u};D\Vert ~.
\tag{${\cal P}_{k}$}  \label{eq_estCm}
\end{equation}%
\bigskip 

\noindent \textit{Proof, initialization:} {The restriction of $(\mathbf{u},p)
$ to $\mathcal{A}_{n+m}$} is a solution of the Stokes equations with
source term $\mathbf{f=}(\mathbf{u}+\mathbf{e}_{1})\cdot \nabla \mathbf{u}$
and boundary data {$\mathbf{u}=\mathbf{u}_{|_{\partial \mathcal{A}_{n+m}}}$%
}. Hence, combining { \cite[theorem 1.1 p. 188]{Galdi94}} and (\ref{eq_controlf}%
), and using that $\Vert \mathbf{u};D\Vert \leq 1$, we find that {%
\begin{equation}
\Vert \mathbf{u};H^{1}(\mathcal{A}_{n+m})\Vert +\Vert p;L^{2}(\mathcal{A}%
_{n+m})/\mathbb{R}\Vert \leq \widetilde{C}_{0}\left[ \Vert \mathbf{u}%
;D\Vert +\Vert \mathbf{u};D\Vert ^{2}\right] \leq C_{m,0}\Vert \mathbf{u}%
;D\Vert ~.  \label{eq_induction0}
\end{equation}%
Our statement holds true for $k=0$.} \bigskip 

{Before the inductive step of the proof, we need to compute an $L^{\infty }$
estimate on $\mathbf{u}$ inside $\mathcal{A}_{n+m}$. To this end,} we recall
that we have by construction that $\mathcal{A}_{n+m}\subset ( \: \overline{%
\mathcal{A}_{n+m}} \: \cap \: \Omega_+) \subset \mathcal{A}_{n+m+1}$. Hence there exists a smooth
truncation function $\chi \in \mathcal{C}^{\infty}(\overline{\Omega}_+)$, such that $\chi=1$ on $\mathcal{A}_{n+m}$ and $\chi=0$ outside $%
\mathcal{A}_{n+m+1}$.  
{ We make the dependance of $\chi$ upon $n$ implicit for legibility.}
Let $\tilde{\mathbf{u}}=\chi\mathbf{u}$ and $%
\tilde{p}=\chi p$.
Then $(\tilde{\mathbf{u}},\tilde{p})$ is a solution
of the Stokes system 
\begin{equation}
\left\{ 
\begin{array}{rl}
\Delta \tilde{\mathbf{u}}-\nabla \tilde{p}=\widetilde{\mathbf{f}} & \text{ on }%
\mathcal{A}_{n+m+1}~\text{$,$} \\ 
\nabla \cdot \tilde{\mathbf{u}}=\tilde{g} & \text{ on }\mathcal{A}_{n+m+1}~%
\text{$,$}%
\end{array}%
\quad \right. 
\begin{array}{c}
\text{with }\tilde{\mathbf{u}}=0\text{ on $\partial \mathcal{A}_{n+m+1}~,$}%
\end{array}
\label{eq_stokestilde}
\end{equation}%
where%
\begin{equation}
\left\{
\begin{array}{rcl}
\widetilde{\mathbf{f}} & = & \chi (\mathbf{u}+\mathbf{e}_{1})\cdot \nabla 
\mathbf{u}+2\nabla \chi \cdot \nabla \mathbf{u}+\left( \Delta \chi \right) 
\mathbf{u}-p\nabla \chi ~, \\ 
\tilde{g} & = & \mathbf{u}\cdot \nabla \chi ~.%
\end{array}
\right.
\label{fg}
\end{equation}%
Using the bound (\ref{eq_induction0}) on $(\mathbf{u},p)$, we find that, { for a given $q<2$ (say $q=3/2$), there holds} $%
\widetilde{\mathbf{f}}\in L^{q}(\mathcal{A}_{n+m+1})$ and $\tilde{g}\in
W^{1,q}(\mathcal{A}_{n+m+1})$. Furthermore we have, 
\begin{multline*}
\Vert \widetilde{\mathbf{f}};L^{q}(\mathcal{A}_{n+m+1})\Vert +\Vert \tilde{g}%
;W^{1,q}(\mathcal{A}_{n+m+1})\Vert  \\[6pt]
\leq C_{m,q}\left[ \Vert \mathbf{u};H^{1}(\mathcal{A}_{n+m+1})\Vert +\Vert 
\mathbf{u};H^{1}(\mathcal{A}_{n+m+1})\Vert ^{2}+\Vert p;{L^{2}(\mathcal{A}%
_{n+m+1})}\Vert \right] ~.
\end{multline*}%
Applying { \cite[Exercise IV.6.2, p. 232]{Galdi94}} {we get} that $\tilde{%
\mathbf{u}}\in W^{2,q}(\mathcal{A}_{n+m+1})$ and $\tilde{p}\in W^{1,q}(%
\mathcal{A}_{n+m+1})$, and that
\begin{multline}
\Vert \tilde{\mathbf{u}};W^{2,q}(\mathcal{A}_{n+m+1})\Vert +\Vert \tilde{p}%
;W^{1,q}(\mathcal{A}_{n+m+1})/\mathbb{R}\Vert  \\[6pt]
\leq C_{m,q}\left[ \Vert \mathbf{u};H^{1}(\mathcal{A}_{n+m+1})\Vert +\Vert 
\mathbf{u};H^{1}(\mathcal{A}_{n+m+1})\Vert ^{2}+\Vert p;{L^{2}(\mathcal{A}%
_{n+m+1})}\Vert \right] ~.  \label{but}
\end{multline}%
Note that we can always replace $p$ by $p+c$ before truncation, so that we
can replace $\Vert p;L^{2}(\mathcal{A}_{n+m+1})\Vert $ by $\Vert p;L^{2}(%
\mathcal{A}_{n+m+1})/\mathbb{R}\Vert $ in the right-hand side of the last
inequality, as well as in the estimates that follow. Combining (\ref{but})
with (\ref{eq_induction0}), we get 
\begin{equation}
\Vert \tilde{\mathbf{u}};W^{2,q}(\mathcal{A}_{n+m+1})\Vert +\Vert \tilde{p}%
;W^{1,q}(\mathcal{A}_{n+m+1})/\mathbb{R}\Vert \leq C_{m,q}\Vert \mathbf{u}%
;D\Vert ~.
\end{equation}%
Therefore, we have in particular that ${\mathbf{u}}\in W^{2,q}(\mathcal{A}%
_{n+m})\subset L^{\infty }(\mathcal{A}%
_{n+m})$ with 
$$\Vert \mathbf{u};L^{\infty }(%
\mathcal{A}_{n+m}))\Vert \leq K_{m,n}\Vert \mathbf{u};D\Vert~. $$

\bigskip

{\noindent \textit{Proof, inductive step:}\newline
Assuming that for $k\leq m,$ there exist 
constants $C_{m,k}$ depending only on $m$ and $k$ such that : 
\begin{equation*}
\Vert \mathbf{u};H{^{k+1}(}\mathcal{A}_{n+m-k}{)}\Vert +\Vert p;H{^{k}(%
\mathcal{A}_{n+m-k})/\mathbb{R}}\Vert \leq C_{m,k}~\Vert \mathbf{u};D\Vert ~,
\end{equation*}%
we apply again the same truncation technique as described above. Namely, we introduce $\chi \in \mathcal{C}^{\infty}(\overline{\Omega}_+)$ a smooth truncation
function such that $\chi=1$ on $\mathcal{A}_{n+m-k-1}$ and $\chi=0$
outside $\mathcal{A}_{n+m-k}$ and we let $\tilde{\mathbf{u}}=\chi%
\mathbf{u}$ and $\tilde{p}=\chi p$. Then $(\tilde{\mathbf{u}},\tilde{p})$
is a solution of the Stokes system (\ref{eq_stokestilde}), on $\mathcal{A}%
_{n+m-k}$ with homogeneous boundary condition. Hence, we get by the
ellipticity of the Stokes operator that 
\begin{equation}
\Vert \tilde{\mathbf{u}};{H^{k+2}(\mathcal{A}_{n+m-k})}\Vert +\Vert \tilde{%
p};{H^{k+1}(\mathcal{A}_{n+m-k})}/\mathbb{R}\Vert   
\leq \tilde{C}_{m,k}\left[ \Vert \mathbf{\tilde{f}};{H^{k}(\mathcal{A}%
_{n+m-k}))}\Vert +\Vert \tilde{g};{H^{k+1}(\mathcal{A}_{n+m-k})}\Vert \right]
~.
\end{equation}%
We also have%
\begin{multline*}
\Vert \mathbf{\tilde{f}};{H^{k}(\mathcal{A}_{n+m-k}))}\Vert +\Vert \tilde{g};%
{H^{k+1}(\mathcal{A}_{n+m-k})}\Vert  \\
\leq \tilde{C}_{m,k}\Big[\Vert \mathbf{u};L^{\infty }(\mathcal{A}%
_{n+m})\Vert ~\Vert \mathbf{u};{H^{k+1}(\mathcal{A}_{n+m-k})}\Vert +\Vert 
\mathbf{u};{H^{k+1}(\mathcal{A}_{n+m-k})}\Vert ^{2} \\
+\Vert \mathbf{u};{H^{k+1}(\mathcal{A}_{n+m-k})}\Vert +\Vert p;{H^{k}(%
\mathcal{A}_{n+m-k})/\mathbb{R}}\Vert \Big]~,
\end{multline*}%
and therefore there exists, by the induction assumption, a constant ${C}%
_{m,k+1}$, such that%
\begin{equation*}
\Vert {\mathbf{u}};{H^{k+2}(\mathcal{A}_{n+m-k-1})}\Vert +\Vert {p};{H^{k+1}(%
\mathcal{A}_{n+m-k-1})}/\mathbb{R}\Vert \leq C_{m,k+1}~\Vert \mathbf{u};{D}%
\Vert ~.
\end{equation*}%
This completes the inductive step and ends the proof.}

\section{Behavior of weak solutions at large distance from the obstacle.}

\label{sec_alphasolution} {In this section we show that the weak solutions of Problem~1 
constructed above decay at infinity with the expected rate. Namely, we prove: }

\begin{theorem}
\label{mainexistencetheorem} There exists $\varepsilon _{e}>0$, such that,
for all $\varepsilon <\varepsilon _{e},$ the weak solution $\mathbf{u}%
_{\varepsilon }$ satisfies the decay estimate, 
\begin{equation}
|\mathbf{u}_{\varepsilon }(x,y)|\leq \dfrac{C_{\varepsilon }}{y^{\frac{3}{2}}%
}~.\quad \forall \,(x,y)\in \Omega _{+}\setminus \overline{S_{\varepsilon }}%
~.  \label{decayestimate}
\end{equation}%
for some $C_{\varepsilon }<\infty .$
\end{theorem}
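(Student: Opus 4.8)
The plan is to follow the five-step strategy announced in the introduction, whose linear heart is the dynamical-system analysis of \cite{Hillairet\&Wittwer09}. Concretely, I would start from the weak solution $\mathbf{u}_\varepsilon$ produced by \textbf{Theorem \ref{thm_cvg0main}}, which we already know is small in the $D$-norm and smooth with controlled $\mathcal{C}^{m}$-norms on the annular regions $\mathcal{A}_n$ once $\varepsilon$ is small. The first concrete step is to localize: using a fixed cut-off function applied (as announced) to the \emph{stream function} of $\mathbf{u}_\varepsilon$ rather than to the velocity itself, I would pass from the exterior-domain weak solution of Problem~1 to a solution $(\tilde{\mathbf{u}},\tilde p)$ of Problem~2, i.e. the system \eqref{Problem1}--\eqref{BC1} posed on the full half space $\Omega_+$ with a compactly supported forcing $\mathbf{f}$. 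Working with the stream function is what keeps $\tilde{\mathbf{u}}$ divergence-free and lets us write $\mathbf{f}$ explicitly; I would then estimate $\mathbf{f}$ in whatever norm the companion paper requires, and check, via \textbf{Theorem \ref{thm_cvg0main}}(ii)--(iii), that $\Vert\mathbf{f}\Vert$ is as small as the smallness hypothesis of \cite{Hillairet\&Wittwer09} demands, for $\varepsilon<\varepsilon_e$.

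Once Problem~2 is set up with small data, the second step is to invoke the existence theory of $\alpha$-solutions from \cite{Hillairet\&Wittwer09}: for $\mathbf{f}$ small there is at least one $\alpha$-solution $(\mathbf{u}_\alpha,p_\alpha)$ of Problem~2, and the construction there comes with the \emph{built-in} asymptotic decay, in particular the $y^{-3/2}$ bound in the vertical variable. The key analytic input is that the dominant operator at infinity is the Oseen system with $\mathbf{u}_\infty=\mathbf{e}_1$; the Fourier-in-$x$ dynamical-system method treats the coordinate $y$ as a time and produces solutions in function spaces that encode precisely this decay rate. So the decay estimate \eqref{decayestimate} is inherited directly from the $\alpha$-solution, provided I can identify $\tilde{\mathbf{u}}$ with $\mathbf{u}_\alpha$.

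That identification is exactly the third step, and it is where the weak-strong uniqueness result enters: having two solutions of Problem~2 — the localized weak solution $\tilde{\mathbf{u}}$ and the constructed $\alpha$-solution $\mathbf{u}_\alpha$ — both arising for $\varepsilon$ small, I would prove they coincide by a standard energy/duality argument, using the smallness of $\Vert\mathbf{u}_\varepsilon;D\Vert$ to absorb the nonlinear term and close the estimate. Since $\tilde{\mathbf{u}}=\mathbf{u}_\alpha$ on the region where the cut-off equals one, and that region is the complement of a fixed compact neighbourhood of the obstacle and the wall, the decay bound of $\mathbf{u}_\alpha$ transfers to $\tilde{\mathbf{u}}$ and hence to $\mathbf{u}_\varepsilon$ far from $S_\varepsilon$. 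Near the obstacle and the wall the bound is trivial from the $\mathcal{C}^{m}$-estimates of \textbf{Theorem \ref{thm_cvg0main}}(iii) (where $y$ is bounded below and above), so the two pieces combine into the global estimate \eqref{decayestimate} after adjusting the constant $C_\varepsilon$.

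The main obstacle, in my view, is the interface between the two solution concepts: the dynamical-system framework of \cite{Hillairet\&Wittwer09} lives in Fourier/weighted spaces tailored to decay, whereas the weak solution lives in the homogeneous Sobolev space $D$, and a priori we only control $\tilde{\mathbf{u}}$ through $\nabla\tilde{\mathbf{u}}\in L^2$ plus local higher regularity. Bridging this gap — showing that $\tilde{\mathbf{u}}$ actually belongs to (or can be compared within) the space where the weak-strong uniqueness argument is valid, and verifying that the forcing $\mathbf{f}$ meets the precise smallness threshold rather than merely being small in some crude norm — is the delicate point. A secondary subtlety is that uniqueness for Problem~2 does \emph{not} immediately give uniqueness for Problem~1, because distinct Problem~1 solutions could in principle yield distinct cut-off forcings $\mathbf{f}$; but for the present decay statement that last step is not needed, since I only require \emph{existence} of one $\alpha$-solution matching the constructed $\tilde{\mathbf{u}}$, and the decay is a property shared by every solution once the identification is made.
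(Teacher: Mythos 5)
Your proposal follows essentially the same route as the paper: truncation of the stream function to pass to Problem~2 with a small compactly supported source, existence of an $\alpha$-solution with built-in $y^{-3/2}$ decay from \cite{Hillairet&Wittwer09}, weak-strong uniqueness to identify the truncated weak solution with the $\alpha$-solution, and transfer of the decay since the two coincide outside a compact set. You also correctly single out the genuine technical core, namely bridging the $D$-space setting with the weighted Fourier spaces (this is what the paper's hypotheses (H1)--(H4), the approximation by compactly supported fields, and the energy inequality for the truncated solution accomplish).
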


This result is proved in three steps by comparing weak solutions with $%
\alpha $-solutions. First, we show how to construct solutions for Problem$~$%
2 by truncating a weak solution for Problem$~$1. We prove in particular
that, when the solid is sufficiently small, 
weak solutions to Problem~1 provided by {\bf Theorem \ref{thm_cvg0main}} yield weak
solutions to Problem~2 with a source term which 
is arbitrary small, so that we are able to construct $\alpha$-solutions. We conclude by proving that 
any weak solution coincides with the $\alpha$-solution when the source-term is sufficiently small.

\subsection{Truncation procedure}

We start this section by describing how to construct a solution for Problem$~
$2 by truncating a weak solution for Problem$~$1. Let 
\begin{equation*}
\begin{array}{rrcl}
\Pi : & D\cap \mathcal{C}^{\infty }(\overline{\Omega _{+}}\setminus {B(}h/4))
& \longrightarrow  & \mathcal{C}^{\infty }(\overline{\Omega _{+}}\setminus {B(}%
h/4)) \\[4pt]
& \mathbf{w} & \longmapsto  & \psi (x,y)=-{ \displaystyle{\int_{1}^{y}}}\mathbf{%
w}(x,z)\cdot \mathbf{e}_{1}~d\text{$z$}~.%
\end{array}%
\end{equation*}%
The divergence-free condition satisfied by $\mathbf{w}$ implies that $\nabla
^{\bot }\Pi \lbrack \mathbf{w}]=\mathbf{w}$, and that 
\begin{equation*}
\Pi \lbrack \mathbf{w}](x,y)=\int_{\gamma }\mathbf{w}^{\bot }\cdot d\gamma ~,
\end{equation*}%
for any path $\gamma $ such that { $\gamma (0)=(0,1)$} and $\gamma (1)=(x,y)$.
Hence, it is sufficient that $\mathbf{w}$ is {smooth} in $\overline{\Omega
_{+}}\setminus {B(}h/3)$ in order for the associated stream-function $\Pi \lbrack \mathbf{w}]$ 
to be smooth in $\overline{\Omega _{+}}\setminus {B(}h/4)$.  {More precisely, for
all $m\in \mathbb{N}$, there exists a constant $C_{m}$, such that, 
\begin{equation}
\Vert \Pi \lbrack \mathbf{w}];{\mathcal{C}^{m}(\overline{{B(}2h/3)}\setminus 
{{B(}h/3)})}\Vert \leq C_{m}\Vert \mathbf{w};{\mathcal{C}^{m-1}(}\overline{{%
{ \mathcal{A}_{2}}}}{)}\Vert ~\quad \forall \,\mathbf{w}\in D\cap \mathcal{C}%
^{\infty }(\overline{\Omega _{+}}\setminus {B(}h/4))~.  \label{estPi}
\end{equation}%
} We introduce a truncation function $\chi \in \mathcal{C}^{\infty }(\mathbb{%
R}^{2})$ which satisfies 
\begin{equation*}
\chi (x,y)=\left\{ 
\begin{array}{ll}
0 & \text{if $|(x,y)-(0,1+h)|<$}h/3 \\ 
\in \lbrack 0,1], & \text{if $|(x,y)-(0,1+h)|\in ({h/3},2h/3)$} \\ 
1 & \text{if $|(x,y)-(0,1+h)|>2h/3$}%
\end{array}%
\right. 
\end{equation*}%
and define truncation operators $\mathbf{T}_{v}$ and ${T}_{\pi }$ for the
velocity and the pressure as follows 
\begin{equation*}
\begin{array}{rrcc}
\mathbf{T}_{v}: & D\cap \mathcal{C}^{\infty }(\overline{\Omega _{+}}%
\setminus {B(}h/4)) & \longrightarrow  & \mathcal{C}^{\infty }(\overline{%
\Omega _{+}}) \\[4pt]
& \mathbf{w} & \longmapsto  & \nabla ^{\bot }\left[ \chi \Pi \lbrack \mathbf{%
w}]\right] 
\end{array}%
\end{equation*}%
and 
\begin{equation*}
\begin{array}{rrcc}
{T}_{\pi }: & \mathcal{C}^{\infty }(\overline{\Omega _{+}}\setminus {B(}h/4))
& \longrightarrow  & \mathcal{C}^{\infty }(\overline{\Omega _{+}}) \\[4pt]
& q & \longmapsto  & \chi q%
\end{array}%
\end{equation*}%
These operators are well-defined, since the truncation function $\chi $
vanishes identically in ${B(}h/4)$. For any $\mathbf{w}\in D\cap \mathcal{C}%
^{\infty }(\overline{\Omega _{+}}\setminus {B(}h/4))$ and $q\in \mathcal{C}%
^{\infty }(\overline{\Omega _{+}}\setminus {B(}h/4))$, we have {by 
a straightforward application of \eqref{estPi}} that
\begin{enumerate}
\item[(T-\emph{i})] $\mathbf{T}_{v}[\mathbf{w}]\in D\cap \mathcal{C}^{\infty
}(\overline{\Omega _{+}})$, and $T_{\pi }[q]\in \mathcal{C}^{\infty }(%
\overline{\Omega _{+}})$,

\item[(T-\emph{ii})] $\mathbf{T}_{v}[\mathbf{w}]=\mathbf{w}$ and $T_{\pi
}[q]=q$ in $\overline{\Omega _{+}}\setminus {B(}2h/3)$,

\item[(T-\emph{iii})] Given $m\in \mathbb{N}$, there exists a constant $C_{m}
$ such that 
\begin{eqnarray*}
\Vert \mathbf{T}_{v}[\mathbf{w}];{\mathcal{C}^{m+1}(\overline{{B(}2h/3)}%
\setminus {{B(}h/3)})}\Vert  &\leq &C_{m}\Vert \mathbf{w};{\mathcal{C}^{m+1}(%
\overline{{ \mathcal{A}_{2}}})}\Vert ~, \\
\Vert T_{\pi }[q];{\mathcal{C}^{m}(\overline{{B(}2h/3)}\setminus {{B(}h/3)})}%
\Vert  &\leq &C_{m}\Vert q;{\mathcal{C}^{m}(\overline{{ \mathcal{A}_{2}}})}%
\Vert ~.
\end{eqnarray*}
\end{enumerate}

{Next for} $(\mathbf{w},q)\in (D \, \cap \, \mathcal{C}^{\infty }(\overline{\Omega
_{+}}\setminus {B(}h/4)))\times \mathcal{C}^{\infty }(\overline{\Omega _{+}}%
\setminus {B(}h/4))$ we define the function $\mathbf{f}\in \mathcal{C}%
^{\infty }(\overline{\Omega _{+}}\setminus {B(}h/4))$ by 
\begin{equation*}
\mathbf{f}=(\mathbf{w}+\mathbf{e}_{1})\cdot \nabla \mathbf{w}-\Delta \mathbf{%
w}+\nabla q~,
\end{equation*}%
and we define $\mathbf{f}=0$ inside ${B(}h/4).$ Finally we define the
function $TNS[\mathbf{w},q]$ on $\Omega _{+}$ by%
\begin{equation*}
TNS[\mathbf{w},q]=-\chi \mathbf{f}+\left[ (\tilde{\mathbf{w}}+\mathbf{e}%
_{1})\cdot \nabla \tilde{\mathbf{w}}-\Delta \tilde{\mathbf{w}}+\nabla \tilde{%
q}\right] ~,
\end{equation*}%
where $(\tilde{\mathbf{w}},\tilde{q})=(\mathbf{T}_{v}[\mathbf{w}],T_{\pi
}[q])$. {Given $(\mathbf{w},q)\in (D\cap \mathcal{C}^{\infty }(\overline{%
\Omega _{+}}\setminus {B(}h/4)))\times \mathcal{C}^{\infty }(\overline{%
\Omega _{+}}\setminus {B(}h/4))$, the above properties of the truncation
operators $\mathbf{T}_{v}$ and $T_{\pi }$ imply 
that the function $TNS[\mathbf{w},q]$ satisfies: }

\begin{enumerate}
\item[(S-\emph{i})] $TNS[\mathbf{w},q]$ {is smooth and} has compact support
in $\overline{{B(}2h/3)}\setminus {B(}h/3)$,

\item[(S-\emph{ii})] The truncated functions $\tilde{\mathbf{w}}=\mathbf{T}%
_{v}[\mathbf{w}]$ and $\tilde{q}=T_{\pi }[q]$ satisfy: 
\begin{equation*}
\left\{ 
\begin{array}{rcll}
(\tilde{\mathbf{w}}+\mathbf{e}_{1})\cdot \nabla \tilde{\mathbf{w}}-\Delta 
\tilde{\mathbf{w}}+\nabla \tilde{q} & = & \chi \mathbf{f} + TNS[\mathbf{w},q]~,
& \text{ in $\Omega _{+}$}~, \\ 
\nabla \cdot \tilde{\mathbf{w}} & = & 0~, & \text{ in $\Omega _{+}$}~.%
\end{array}%
\right. 
\end{equation*}%
with $\mathbf{f}=(\mathbf{w}+\mathbf{e}_{1})\cdot \nabla \mathbf{w}-\Delta 
\mathbf{w}+\nabla q$,

\item[(S-\emph{iii})] Given $m\in \mathbb{N}$, there exists a constant $C_{m}
$ such that 
\begin{multline}
\Vert TNS[\mathbf{w},q];{\mathcal{C}^{m}(\overline{{B(}2h/3)}\setminus {{B(}%
h/3)})}\Vert \\
\leq C_{m}\left[ \left(1+ \Vert \mathbf{w};{\mathcal{C}^{m+2}(\overline{{%
\mathcal{A}}_{2}})}\Vert \right)\Vert \mathbf{w};{\mathcal{C}^{m+2}(\overline{{%
\mathcal{A}}_{2}})}\Vert +\Vert q;{\mathcal{C}^{m+1}(\overline{{\mathcal{A}}%
_{2}})}/\mathbb{R}\Vert \right] ~.  \label{eq_controlTNS}
\end{multline}

\end{enumerate}

{Applying this construction to any weak solution of Problem~1 yields a
solution of Problem~2 for the source term computed with $TNS.$ To prepare
the last weak-strong uniqueness argument of this section, we show that such
solutions of Problem~2 obtained by truncation satisfy a further energy
property. This is the content of the next proposition. }

\begin{proposition}
\label{thm_existencews2} {Given $\varepsilon $ such that $S_{\varepsilon
}\subset \subset B(h/4)$} and a weak solution $\mathbf{u}$ of {Problem 1 for}
$S_{\varepsilon }$ with associated pressure $p$, the vector-field $\tilde{%
\mathbf{u}}=\mathbf{T}_{v}[\mathbf{u}]$ satisfies

\begin{enumerate}
\item[(i)] $\tilde{\mathbf{u}} \in D$,

\item[(ii)] for all $\mathbf{w}\in \mathcal{D},$ there holds: 
\begin{equation}
\int_{\Omega _{+}}\nabla \tilde{\mathbf{u}}:\nabla \mathbf{w}\,d\text{$%
\mathbf{x}$}+\int_{\Omega _{+}}\left[ (\tilde{\mathbf{u}}+\mathbf{e}%
_{1})\cdot \nabla \tilde{\mathbf{u}}\right] \cdot \mathbf{w}\,d\text{$%
\mathbf{x}$}=\int_{\Omega _{+}}\widetilde{\mathbf{f}}\cdot \mathbf{w}\,d%
\text{$\mathbf{x}$}~,  \label{eq_wfv}
\end{equation}%
and 
\begin{equation}
\int_{\Omega _{+}}|\nabla \tilde{\mathbf{u}}|^{2}\,d\text{$\mathbf{x}$}\leq
\int_{\Omega _{+}}\widetilde{\mathbf{f}}\cdot \tilde{\mathbf{u}}\,d\text{$%
\mathbf{x}$}~,  \label{eq_NRJv}
\end{equation}%
with $\widetilde{\mathbf{f}}=TNS[\mathbf{u},p]$.
\end{enumerate}
\end{proposition}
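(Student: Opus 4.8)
The plan is to read off parts (i) and the weak equation (ii) directly from the truncation calculus already set up, and to concentrate the real effort on the energy inequality \eqref{eq_NRJv}. Since $S_{\varepsilon}\subset\subset B(h/4)$, the weak solution $\mathbf{u}$ and its pressure $p$ are smooth on $\overline{\Omega_{+}}\setminus B(h/4)$ by the regularity recalled after Definition \ref{defweaksolution}, so $\mathbf{u}\in D\cap\mathcal{C}^{\infty}(\overline{\Omega_{+}}\setminus B(h/4))$ and property (T-\emph{i}) gives at once $\tilde{\mathbf{u}}=\mathbf{T}_{v}[\mathbf{u}]\in D$, which is (i). Next I would note that the source $\mathbf{f}=(\mathbf{u}+\mathbf{e}_{1})\cdot\nabla\mathbf{u}-\Delta\mathbf{u}+\nabla p$ entering $TNS[\mathbf{u},p]$ vanishes identically: on $\Omega_{+}\setminus\overline{S_{\varepsilon}}\supset\overline{\Omega_{+}}\setminus B(h/4)$ it is zero because $(\mathbf{u},p)$ solves \eqref{Problem2}, and it is set to zero inside $B(h/4)$ by definition. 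Hence $\chi\mathbf{f}\equiv0$ and property (S-\emph{ii}) reduces to the clean classical identity $(\tilde{\mathbf{u}}+\mathbf{e}_{1})\cdot\nabla\tilde{\mathbf{u}}-\Delta\tilde{\mathbf{u}}+\nabla\tilde{p}=\tilde{\mathbf{f}}$ on $\Omega_{+}$, with $\tilde{p}=T_{\pi}[p]$ and $\tilde{\mathbf{f}}=TNS[\mathbf{u},p]$. Multiplying this by an arbitrary $\mathbf{w}\in\mathcal{D}$ and integrating over $\Omega_{+}$ then yields (ii): integration by parts turns $-\int\Delta\tilde{\mathbf{u}}\cdot\mathbf{w}$ into $\int\nabla\tilde{\mathbf{u}}:\nabla\mathbf{w}$ with no boundary contribution, while $\int\nabla\tilde{p}\cdot\mathbf{w}=-\int\tilde{p}\,\nabla\cdot\mathbf{w}=0$, which is exactly \eqref{eq_wfv}. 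No behaviour at infinity enters, since $\mathbf{w}$ already has compact support.

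The substantive point is the energy inequality, and the conceptual obstacle is that one cannot simply put $\mathbf{w}=\tilde{\mathbf{u}}$ in \eqref{eq_wfv}: the field $\tilde{\mathbf{u}}\in D$ is not compactly supported and its decay at infinity is precisely what has not yet been established, so the trilinear term $\int[(\tilde{\mathbf{u}}+\mathbf{e}_{1})\cdot\nabla\tilde{\mathbf{u}}]\cdot\tilde{\mathbf{u}}$ is not even known to be finite. My strategy to bypass this is to exploit that, by property (T-\emph{ii}), the difference $\mathbf{d}:=\tilde{\mathbf{u}}-\mathbf{u}$ is supported in the \emph{bounded} set $\overline{B(2h/3)}$; it is divergence free, lies in $D$, and equals $\mathbf{e}_{1}$ on $S_{\varepsilon}$ (because $\tilde{\mathbf{u}}=0$ there while $\mathbf{u}=-\mathbf{e}_{1}$), so $\Gamma(\mathbf{d})=\mathbf{e}_{1}$.

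I would then write, using the scalar product \eqref{sp}, $\int|\nabla\tilde{\mathbf{u}}|^{2}=\int|\nabla\mathbf{u}|^{2}+2((\mathbf{u},\mathbf{d}))+((\mathbf{d},\mathbf{d}))$, bound the first summand by the energy inequality \eqref{eq_NRJ} built into the definition of the weak solution $\mathbf{u}$, evaluate $((\mathbf{u},\mathbf{d}))$ from the weak formulation \eqref{eq_wf} tested against $\mathbf{d}$, and evaluate $((\mathbf{u},\mathbf{d}))+((\mathbf{d},\mathbf{d}))=((\tilde{\mathbf{u}},\mathbf{d}))$ from \eqref{eq_wfv} tested against $\mathbf{d}$. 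Both test steps are legitimate even though $\mathbf{d}$ is only in $D$ and need not be smooth across $\partial S_{\varepsilon}$: because $\mathbf{d}$ has compact support one approximates it in $D$ by smooth compactly supported divergence-free fields and passes to the limit using the local trilinear bound \eqref{eq_trilinear}; for \eqref{eq_wf} the approximants are taken in $\mathcal{D}^{\varepsilon}$, which is permitted by the density $\overline{\mathcal{D}^{\varepsilon}_{\mathbf{e}_{1}}}=D^{\varepsilon}_{\mathbf{e}_{1}}$.

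Combining these three relations, every correction term is an integral over the bounded region $\overline{B(2h/3)}$, so the unknown behaviour at infinity never intervenes, and the only inequality invoked is \eqref{eq_NRJ}. What remains is to identify the resulting bounded-domain expression with $\int\tilde{\mathbf{f}}\cdot\tilde{\mathbf{u}}$; after splitting $\int\tilde{\mathbf{f}}\cdot\tilde{\mathbf{u}}=\int\tilde{\mathbf{f}}\cdot\mathbf{u}+\int\tilde{\mathbf{f}}\cdot\mathbf{d}$ this reduces to a purely local integration-by-parts identity relating $\int\tilde{\mathbf{f}}\cdot\mathbf{u}$ to the trilinear quantities $\int[(\mathbf{u}+\mathbf{e}_{1})\cdot\nabla\mathbf{u}]\cdot\mathbf{d}$ and $\int[(\tilde{\mathbf{u}}+\mathbf{e}_{1})\cdot\nabla\tilde{\mathbf{u}}]\cdot\mathbf{d}$, which one checks from the classical equations for $(\mathbf{u},p)$ and $(\tilde{\mathbf{u}},\tilde{p})$ on $B(2h/3)$ and the explicit form of $\tilde{\mathbf{f}}=TNS[\mathbf{u},p]$ given by (S-\emph{ii}). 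I expect this last bookkeeping, in particular tracking the boundary terms on $\partial B(2h/3)$ (where $\tilde{\mathbf{u}}=\mathbf{u}$) and using that $\tilde{\mathbf{u}}$ vanishes on $B(h/3)$, to be the main though elementary technical burden. The conceptual gain is that the single inequality sign in \eqref{eq_NRJv} is then inherited directly from \eqref{eq_NRJ}, rather than from any control of $\tilde{\mathbf{u}}$ at large distance, which is exactly the quantity this whole section is still trying to establish.
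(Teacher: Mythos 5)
Your handling of (i) and of the weak formulation \eqref{eq_wfv} is essentially the paper's own: (T-\emph{i}) gives $\tilde{\mathbf{u}}\in D$, the term $\chi\mathbf{f}$ vanishes because $(\mathbf{u},p)$ solves the equations classically outside $S_{\varepsilon}$, and \eqref{eq_wfv} follows by pairing the pointwise identity from (S-\emph{ii}) with $\mathbf{w}\in\mathcal{D}$. For the energy inequality you take a genuinely different route. The paper writes the local energy identity for $(\tilde{\mathbf{u}},\tilde{p})$ on $B(5h/6)$ and for $(\mathbf{u},p)$ on $B(5h/6)\setminus\overline{S_{\varepsilon}}$, subtracts the latter from the global inequality \eqref{eq_NRJ}, and uses that the boundary integrals on $\partial B(5h/6)$ and the exterior Dirichlet integrals coincide since $(\mathbf{u},p)=(\tilde{\mathbf{u}},\tilde{p})$ outside $B(2h/3)$. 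You instead expand $\Vert\tilde{\mathbf{u}};D\Vert^{2}$ around $\mathbf{u}$ via the compactly supported difference $\mathbf{d}=\tilde{\mathbf{u}}-\mathbf{u}$ with $\Gamma(\mathbf{d})=\mathbf{e}_{1}$ and test both weak formulations against $\mathbf{d}$; this is sound, and the density and continuity points you invoke are the right ones (you should also record that $\mathbf{d}\in D^{\varepsilon}_{\mathbf{e}_{1}}$ in the first place, which rests on the same density statement quoted from Galdi). Be aware, though, that the ``bookkeeping identity'' you defer, namely
\begin{equation*}
\int_{\Omega_{+}}\widetilde{\mathbf{f}}\cdot\mathbf{u}~d\mathbf{x}=-\int_{\Omega_{+}}\left[(\mathbf{u}+\mathbf{e}_{1})\cdot\nabla\mathbf{u}\right]\cdot\mathbf{d}~d\mathbf{x}-\int_{\Omega_{+}}\left[(\tilde{\mathbf{u}}+\mathbf{e}_{1})\cdot\nabla\tilde{\mathbf{u}}\right]\cdot\mathbf{d}~d\mathbf{x}~,
\end{equation*}
is not a side remark: once your three relations are inserted, it is equivalent to the equality of energy defects $\int_{\Omega_+}|\nabla\tilde{\mathbf{u}}|^{2}-\int_{\Omega_+}\widetilde{\mathbf{f}}\cdot\tilde{\mathbf{u}}=\int_{\Omega_+}|\nabla\mathbf{u}|^{2}-\mathbf{\Sigma}\cdot\mathbf{e}_{1}$, which is exactly the content of the paper's subtraction argument. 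The identity is true and is proved by the computation you sketch (pair the equation for $(\tilde{\mathbf{u}},\tilde{p})$ with $\mathbf{u}$ and the equation for $(\mathbf{u},p)$ with $\tilde{\mathbf{u}}$ on $B(5h/6)$, cancel the boundary terms on $\partial B(5h/6)$ where the two pairs agree, and use that $\tilde{\mathbf{u}}$ vanishes on $B(h/3)$ so that $\partial S_{\varepsilon}$ contributes nothing), so your reorganization relocates rather than removes the paper's central local integration by parts. What it buys is that the single inequality sign in \eqref{eq_NRJv} is visibly traced to \eqref{eq_NRJ} alone and that no unbounded-domain term ever appears.
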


As for the case of Problem$~$1, we emphasize 
that $\widetilde{\mathbf{f}}$ and the test-functions $\mathbf{w}$ have compact support so
that the integrals in (\ref{eq_wfv}) and (\ref{eq_NRJv}) are well-defined. 
A velocity-field $\tilde{\mathbf{u}}$ satisfying $(i)$ and $(ii)$ for a given $%
\widetilde{\mathbf{f}}\in \mathcal{C}_{c}^{\infty }(\Omega _{+})$ is called
a \textbf{weak solution} for Problem~2 with source term $\widetilde{\mathbf{f%
}}$.

\medskip 

\begin{proof}
First {we recall that ellipticity estimates for the Stokes system imply that}
any weak solution $\mathbf{u}$ for $S_{\varepsilon }$ with associated
pressure $p$ satisfies
\begin{equation*}
(\mathbf{u},p)\in \left( D\cap \mathcal{C}^{\infty }(\overline{\Omega _{+}}%
\setminus {B(}h/4))\right) \times \mathcal{C}^{\infty }(\overline{\Omega _{+}%
}\setminus {B(}h/4))~.
\end{equation*}%
Hence $\tilde{\mathbf{u}}=\mathbf{T}_{v}[\mathbf{u}]$, $\tilde{p}=T_{\pi }[p]$ and $%
TNS[\mathbf{u},p]$ are well-defined. Moreover $(\mathbf{u},p)$ is a
classical solution of the Navier Stokes equations outside $S_{\varepsilon }$
and in particular in $\overline{\Omega_{+}}\setminus {B(h/4)}$.

In order to show that $\tilde{\mathbf{u}}$ is a weak solution of Problem$~$2
we first use {(T-\emph{i})} to conclude that $\tilde{\mathbf{u}}\in D$.
Then, since $(\mathbf{u},p)$ is a classical solution to the Navier Stokes
equations in $\Omega _{+}\setminus \overline{B(h/4)}$, the second point (S-$ii$)
implies that we have 
\begin{equation*}
(\tilde{\mathbf{u}}+\mathbf{e}_{1})\cdot \nabla \tilde{\mathbf{u}}-\Delta 
\tilde{\mathbf{u}}+\nabla \tilde{p}=TNS[\mathbf{u},p]
\end{equation*}%
in $\Omega _{+}$. If we multiply this equality by $\mathbf{w}\in \mathcal{D}$
and integrate by parts we obtain (\ref{eq_wfv}) for $\tilde{\mathbf{u}}$,
with ${\widetilde{\mathbf{f}}}=TNS[\mathbf{u},p]$.

The main difficulty of the proof is to obtain the energy estimate (\ref%
{eq_NRJv}) for $\tilde{\mathbf{u}}$. For this purpose, we multiply the
Navier Stokes equations satisfied by $(\tilde{\mathbf{u}},\tilde{p})$ on ${B(%
}5h/6)$ by $\mathbf{\tilde{u}}$. Integrating by parts yields 
\begin{equation}
\int_{{{B(}5h/6)}}|\nabla \tilde{\mathbf{u}}|^{2}~d\mathbf{x}=\int_{{{B(}%
5h/6)}}\widetilde{\mathbf{f}}\cdot \tilde{\mathbf{u}}~d\mathbf{x}+\int_{\partial {{B(}%
5h/6)}}\left[ T(\tilde{\mathbf{u}},\tilde{p})\mathbf{n}\cdot \tilde{\mathbf{u%
}}+\dfrac{|\tilde{\mathbf{u}}|^{2}}{2}(\tilde{\mathbf{u}}+\mathbf{e}_{1})\cdot \mathbf{n}%
\right] ~d\text{$\sigma $}~.  \label{eq_demoNRJ1}
\end{equation}%
Next, multiplying the Navier Stokes equation satisfied by $(\mathbf{u},p)$
on ${{B(}5h/6)}\setminus \overline{S_{\varepsilon }}$ by $\mathbf{u}$ and
integrating by parts gives 
\begin{equation}
\int_{{{B(}5h/6)}}|\nabla \mathbf{u}|^{2}~d\mathbf{x}=\mathbf{\Sigma }\cdot 
\mathbf{e}_{1}+\int_{\partial {{B(}5h/6)}}\left[ T(\mathbf{u},{p})\mathbf{n}%
\cdot {\mathbf{u}}+\dfrac{|{\mathbf{u}}|^{2}}{2}(\mathbf{u}+\mathbf{e}%
_{1})\cdot \mathbf{n}\right] ~d\text{$\sigma ~,$}  \label{eq_toto1}
\end{equation}%
with $\mathbf{\Sigma }$ the associated force applied on $S_{\varepsilon }$.
By definition, we have 
\begin{equation}
\int_{\Omega _{+}}|\nabla \mathbf{u}|^{2}~d\mathbf{x}\leq \mathbf{\Sigma }%
\cdot \mathbf{e}_{1}~.  \label{eq_toto2}
\end{equation}%
Subtracting (\ref{eq_toto1}) from \eqref{eq_toto2} yields%
\begin{equation}
\int_{\Omega _{+}\setminus \overline{{B(}5h/6)}}|\nabla \mathbf{u}|^{2}~d%
\mathbf{x}\leq -\int_{\partial {{B(}5h/6)}}\left[ T(\mathbf{u},p)\mathbf{n}%
\cdot {\mathbf{u}}+\dfrac{|{\mathbf{u}}|^{2}}{2}(\mathbf{u}+\mathbf{e}%
_{1})\cdot \mathbf{n}\right] ~d\sigma ~.  \label{eq_demoNRJ2}
\end{equation}%
Since outside ${B(}2h/3)$ we have by construction that $\mathbf{u}=\tilde{%
\mathbf{u}}$ and $p=\tilde{p}$, we get by combining (\ref{eq_demoNRJ1}) and (%
\ref{eq_demoNRJ2}) 
\begin{equation*}
\int_{\Omega _{+}}|\nabla \tilde{\mathbf{u}}|^{2}~d\mathbf{x}\leq
\int_{\Omega _{+}}TNS[\mathbf{u},p]\cdot \tilde{\mathbf{u}}~d\mathbf{x}~.
\end{equation*}%
This completes the proof.
\end{proof}

\subsection{Existence of $\protect\alpha $-solutions}

{The second step of the proof of }{\bf Theorem \ref{mainexistencetheorem}}{\ is to
construct an $\alpha $-solution for Problem~2 with the source term $%
\widetilde{\mathbf{f}}_{\varepsilon }$ obtained 
by truncation of a weak solutions $\mathbf{u}_{\varepsilon }$%
. } To keep this paper self-contained, we recall the definition and the main
properties of $\alpha $-solutions. See \cite{Hillairet&Wittwer09}, for
details.

\begin{definition}
We define for fixed $\alpha $, $r\geq 0$ the function $\mu _{\alpha
,r}\colon \mathbb{R\times }[1,\infty ) \to (0,\infty)$ by 
\begin{equation}
\mu _{\alpha ,r}(k,t)=\frac{1}{1+\left( \left\vert k\right\vert t^{r}\right)
^{\alpha }}~.  \label{mu}
\end{equation}%
We define, for fixed $\alpha \geq 0$, and $p$, $q$ $\geq 0$, $\mathcal{B}%
_{\alpha ,p,q}$ to be the Banach space of functions $\hat{f}\in \mathcal{C}(%
\mathbb{R}_{0}\times \lbrack 1,\infty ),\mathbb{C})$, $\mathbb{R}_{0}=%
\mathbb{R}\setminus \{0\}$, for which the norm 
\begin{equation*}
\left\Vert \hat{f};~\mathcal{B}_{\alpha ,p,q}\right\Vert =\sup_{t\geq
1}\sup_{k\in \mathbb{R}_{0}}\frac{\left\vert \hat{f}(k,t)\right\vert }{\frac{%
1}{t^{p}}\mu _{\alpha ,1}(k,t)+\frac{1}{t^{q}}\mu _{\alpha ,2}(k,t)}
\end{equation*}%
is finite. Furthermore, we set 
$\mathcal{U}_{\alpha }=\mathcal{B}_{\alpha,\frac{5}{2},1} \times \mathcal{B}_{\alpha ,\frac{1}{2},0}\times \mathcal{B}_{\alpha ,\frac{1}{2},1}$.
\end{definition}

{ Formally, it is possible to compute the velocity-field $\mathbf{u} = (u,v),$ of a solution $(\mathbf u,p)$ to Problem 2 with source term $\widetilde{\mathbf{f}} := (F_1,F_2)$,  
as the inverse fourier transform, with respect to $x,$ of a pair $(\hat{u},\hat{v})$ : 
$$
u(x,y) = \int_{\mathbb R} e^{ikx} \hat{u}(k,y) \text{d$k$},
\qquad
v(x,y) = \int_{\mathbb R} e^{ikx} \hat{v}(k,y) \text{d$k$},
\quad
\forall \, (x,y) \in \Omega_+,
$$ 
the pair $(\hat{u},\hat{v})$ satisfying:
$$
\hat{u}(k,y) = - \hat{\eta}(k,y) + \hat{\phi}(k,y) \qquad \hat{v}(k,y) = \hat{\omega}(k,y) + \hat{\psi}(k,y), \quad \forall \, (k,y) \in \mathbb R \times (1,\infty),
$$ 
with $(\hat{\omega},\hat{\eta},\hat{\phi},\hat{\psi})$ a solution to 
\begin{eqnarray}
\partial_y \hat{\omega} &=& - i k \hat{\eta} + \hat{Q}_1 ,  \label{eq_Fourier1}\\
\partial_y \hat{\eta} & = & (ik+1) \hat{\omega} + \hat{Q}_0,\\
\partial_y \hat{\psi} & = & ik\hat{\phi} - \hat{Q}_1, \\
\partial_y \hat{\phi} & = & -ik \hat{\psi} + \hat{Q}_0.
\end{eqnarray}
The source terms $(\hat{Q}_0,\hat{Q}_1)$ is computed as follows :
\begin{eqnarray}
\hat{Q}_0 &=& \dfrac{1}{2\pi} (\hat{u} * \hat{\omega}) + \hat{F}_2 , \\
\hat{Q}_1 &=& \dfrac{1}{2\pi} (\hat{v} * \hat{\omega}) - \hat{F}_1. \label{eq_Fourier2}
\end{eqnarray}
Here $\hat{F}_1$ and $\hat{F}_2$ stand for the fourier transform, with respect to $x$, of $F_1$ and $F_2$ respectively.
When the solution $(\hat{\omega},\hat{u},\hat{v})$ given by the solution of \eqref{eq_Fourier1}--\eqref{eq_Fourier2}
satisfies $(\hat{\omega},\hat{u},\hat{v}) \in \mathcal{U}_{\alpha}$ with $\alpha >3,$ the velocity-field $\mathbf{u} =  (u,v)$
constructed this way is a weak solution to Problem 2 in the sense of {\bf Proposition \ref{thm_existencews2}}.
}
\medskip

In \cite{Hillairet&Wittwer09} the following existence theorem  is proved:

\begin{theorem}
\label{thm_existenceas} Let $\alpha >3$, $\mathbf{f}\in \mathcal{C}%
_{c}^{\infty }(\Omega _{+})$, and let $\mathbf{\widehat{f}}$ be the Fourier
transform with respect to $x$ of $\mathbf{f}$. If $\Vert \mathbf{\widehat{f}}%
;\mathcal{W}_{\alpha }\Vert $ is sufficiently small, then there exists 
an $\alpha $-solution $\mathbf{\bar{u}}$ being the inverse Fourier transform
(with respect to $x$) of $\mathbf{\hat{u}}\in \mathcal{U}_{\alpha }$, with $%
\mathbf{\hat{u}}$ satisfying $\Vert \mathbf{\hat{u}};\mathcal{U}_{\alpha
}\Vert \leq C_{\alpha }\Vert \mathbf{\widehat{f}};\mathcal{W}_{\alpha }\Vert 
$, for some constant $C_{\alpha }$ depending only on the choice of $\alpha $.
\end{theorem}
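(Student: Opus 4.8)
The plan is to obtain $\mathbf{\hat u}=(\hat\omega,\hat u,\hat v)$ as the fixed point of a contraction on a small ball of $\mathcal{U}_{\alpha}$, following the perturbative scheme of \cite{Hillairet&Wittwer09}. First I would regard \eqref{eq_Fourier1}--\eqref{eq_Fourier2} as an inhomogeneous linear system in which the convolutions $\tfrac{1}{2\pi}(\hat u*\hat\omega)$ and $\tfrac{1}{2\pi}(\hat v*\hat\omega)$ hidden in $\hat Q_0,\hat Q_1$ are \emph{frozen}, i.e.\ treated as prescribed data alongside $\hat F_1,\hat F_2$. For each fixed $k$ the homogeneous part decouples into the two constant-coefficient blocks
\begin{gather*}
\partial_y\begin{pmatrix}\hat\omega\\ \hat\eta\end{pmatrix}=\begin{pmatrix}0&-ik\\ ik+1&0\end{pmatrix}\begin{pmatrix}\hat\omega\\ \hat\eta\end{pmatrix},\\
\partial_y\begin{pmatrix}\hat\psi\\ \hat\phi\end{pmatrix}=\begin{pmatrix}0&ik\\ -ik&0\end{pmatrix}\begin{pmatrix}\hat\psi\\ \hat\phi\end{pmatrix},
\end{gather*}
with eigenvalues $\pm\sqrt{k^{2}-ik}$ for the Oseen block and $\pm|k|$ for the potential block. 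In each block one eigenvalue has positive and one has negative real part, so I would build the linear solution operator $\mathcal{L}$ by a variation-of-constants formula that selects, in every block, the mode decaying as $y\to\infty$ while the homogeneous part is fixed by the boundary data $\hat u(k,1)=\hat v(k,1)=0$ at the wall.

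The core of the argument is the linear estimate $\Vert\mathcal{L}[\hat Q];\mathcal{U}_{\alpha}\Vert\le C_{\alpha}\Vert\hat Q;\mathcal{W}_{\alpha}\Vert$. One must show that convolving (in the $y$ variable) the exponential kernels $e^{-\sqrt{k^{2}-ik}\,(y-s)}$ and $e^{-|k|(y-s)}$ against a datum bounded by $\tfrac{1}{t^{p}}\mu_{\alpha,1}(k,t)+\tfrac{1}{t^{q}}\mu_{\alpha,2}(k,t)$ reproduces a bound of the same two-weight form, but now with the component-specific exponents $(\tfrac{5}{2},1)$, $(\tfrac{1}{2},0)$, $(\tfrac{1}{2},1)$ defining $\mathcal{U}_{\alpha}$. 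The subtle feature is the crossover of $\mathrm{Re}\sqrt{k^{2}-ik}$, which is of order $|k|$ for $|k|\ge 1$ but of order $\sqrt{|k|/2}$ for $|k|\le 1$: the large-$|k|$ regime matches the weight $\mu_{\alpha,1}$ (scale $|k|t\sim1$) and the small-$|k|$ regime matches $\mu_{\alpha,2}$ (scale $|k|t^{2}\sim1$). Tracking the precise gain of powers of $t$ in the Duhamel integral across this crossover is what pins down the above exponents, and I expect it to be the main obstacle: it is a family of elementary but delicate weighted-kernel estimates rather than one clean inequality.

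Next I would prove the bilinear estimate
\begin{equation*}
\big\Vert\tfrac{1}{2\pi}(\hat u*\hat\omega);\mathcal{W}_{\alpha}\big\Vert+\big\Vert\tfrac{1}{2\pi}(\hat v*\hat\omega);\mathcal{W}_{\alpha}\big\Vert\le C_{\alpha}\,\Vert\mathbf{\hat u};\mathcal{U}_{\alpha}\Vert^{2},
\end{equation*}
which amounts to showing that the product of two functions carrying weights $\mu_{\alpha,r}$ convolves back into the same scale of weights. This is exactly where the hypothesis $\alpha>3$ is used: the integrals $\int_{\mathbb{R}}\mu_{\alpha,r}(k-k',t)\,\mu_{\alpha,r'}(k',t)\,dk'$ converge and are dominated by a single weight of the family precisely because the algebraic decay order $\alpha$ is large enough, so that $\mathcal{U}_{\alpha}$ acts multiplicatively into $\mathcal{W}_{\alpha}$.

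Finally I would set $\mathcal{T}[\mathbf{\hat u}]=\mathcal{L}\big[\mathcal{N}[\mathbf{\hat u}]+\mathbf{\widehat f}\big]$, where $\mathcal{N}[\mathbf{\hat u}]$ collects the convolution source terms. Combining the linear and bilinear estimates gives $\Vert\mathcal{T}[\mathbf{\hat u}];\mathcal{U}_{\alpha}\Vert\le C_{\alpha}\big(\Vert\mathbf{\widehat f};\mathcal{W}_{\alpha}\Vert+\Vert\mathbf{\hat u};\mathcal{U}_{\alpha}\Vert^{2}\big)$ together with a matching Lipschitz bound on differences. On the ball $\{\Vert\mathbf{\hat u};\mathcal{U}_{\alpha}\Vert\le 2C_{\alpha}\Vert\mathbf{\widehat f};\mathcal{W}_{\alpha}\Vert\}$ the map $\mathcal{T}$ is then a self-map and a strict contraction as soon as $\Vert\mathbf{\widehat f};\mathcal{W}_{\alpha}\Vert$ is small enough, and the Banach fixed-point theorem produces a unique $\mathbf{\hat u}\in\mathcal{U}_{\alpha}$ satisfying the claimed bound $\Vert\mathbf{\hat u};\mathcal{U}_{\alpha}\Vert\le C_{\alpha}\Vert\mathbf{\widehat f};\mathcal{W}_{\alpha}\Vert$. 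Its inverse Fourier transform is, by the criterion stated just before the theorem (valid for $\alpha>3$), the desired $\alpha$-solution $\mathbf{\bar u}$.
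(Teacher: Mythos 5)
The paper gives no proof of Theorem~\ref{thm_existenceas}: it is quoted as established in \cite{Hillairet&Wittwer09}, and your outline reproduces the strategy of that reference essentially step for step --- Fourier transform in $x$, the $y$-evolution system \eqref{eq_Fourier1}--\eqref{eq_Fourier2} split into an Oseen block with eigenvalues $\pm\sqrt{k^{2}-ik}$ and a potential block with eigenvalues $\pm|k|$, selection of the decaying modes via a variation-of-constants formula, weighted Duhamel estimates in the two-scale weights $\mu_{\alpha,1},\mu_{\alpha,2}$, a bilinear convolution estimate exploiting $\alpha>3$, and a Banach fixed point on a small ball of $\mathcal{U}_{\alpha}$. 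Your sketch correctly locates the real work in the weighted-kernel estimates across the $|k|\sim 1$ crossover (and should only note that the two blocks, though decoupled as ODEs, are coupled through the no-slip conditions at $y=1$), but, as you acknowledge, it does not carry those estimates out, so it is an accurate road map to the cited proof rather than a self-contained one.
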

The $\alpha $-solution $\bar{\mathbf{u}}$ satisfies:

\begin{enumerate}
\item $\bar{\mathbf{u}} \in H^1_0(\Omega_+),$

\item there exists a constant $C$ such that: 
\begin{equation*}
\Vert \bar{\mathbf{u}};H_{0}^{1}(\Omega _{+})\Vert \leq C\Vert \mathbf{\hat{u%
}};\mathcal{U}_{\alpha }\Vert ~,\quad \text{ and}\quad |\bar{\mathbf{u}}%
(x,y)|\leq C\dfrac{\Vert \mathbf{\hat{u}};\mathcal{U}_{\alpha }\Vert }{%
y^{3/2}}~,\quad \forall \,(x,y)\in \Omega _{+}~.
\end{equation*}
\end{enumerate}

We now show that when the obstacle size is small, the function $\widetilde{\mathbf{f}}%
_{\varepsilon }=TNS[\mathbf{u}_{\varepsilon },p_{\varepsilon }]$ satisfies
the condition of {\bf Theorem \ref{thm_existenceas}}. This reads:

\begin{lemma}
\label{lem_petitesse}Given $\alpha > 3,$ there exists $\varepsilon
_{\alpha }>0$ such that, for all $\varepsilon <\varepsilon _{\alpha }$ {the}
weak solution $\mathbf{u}_{\varepsilon }$ with associated pressure $%
p_{\varepsilon }$ is such that {Problem~2} with source term $\widetilde{\mathbf{f}}%
_{\varepsilon }=TNS\left[ \mathbf{u}_{\varepsilon },p_{\varepsilon }\right] $
admits an $\alpha $-solution $\mathbf{\bar{u}}_{\varepsilon }$. Moreover,
there exists $C_{\alpha }<\infty $ depending only on $\alpha $ such that the 
$\alpha $-solution satisfies $\Vert \mathbf{\hat{u}}_{\varepsilon };\mathcal{%
U}_{\alpha }\Vert \leq C_{\alpha }\Vert \mathbf{u}_{\varepsilon };D\Vert $,
where $\mathbf{\hat{u}}_{\varepsilon }$ is the Fourier transform of $\mathbf{%
\bar{u}}_{\varepsilon }$ with respect to $x$.
\end{lemma}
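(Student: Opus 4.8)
The plan is to reduce the statement to a direct application of \textbf{Theorem \ref{thm_existenceas}}, the only genuine work being to verify that the Fourier transform of the truncated source term $\widetilde{\mathbf{f}}_{\varepsilon}=TNS[\mathbf{u}_{\varepsilon},p_{\varepsilon}]$ is small in $\mathcal{W}_{\alpha}$, with a smallness controlled \emph{linearly} by $\Vert \mathbf{u}_{\varepsilon};D\Vert$. The structural fact I would exploit is property (S-\emph{i}): for every $\varepsilon$ with $S_{\varepsilon}\subset\subset B(h/4)$ the function $\widetilde{\mathbf{f}}_{\varepsilon}$ is smooth and supported in the \emph{fixed} annulus $\overline{B(2h/3)}\setminus B(h/3)$, whose location and size do not depend on $\varepsilon$. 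This uniformity of the support is precisely what makes the Fourier analysis uniform in $\varepsilon$.

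First I would estimate the $\mathcal{C}^{m}$ norms of $\widetilde{\mathbf{f}}_{\varepsilon}$ on this annulus, for a fixed $m=m(\alpha)$ to be chosen below. Combining (S-\emph{iii}) with the elliptic bounds of \textbf{Theorem \ref{thm_cvg0main}}(iii) applied on $\overline{\mathcal{A}_{2}}$ (used with index $m+1$), namely
\[
\Vert \mathbf{u}_{\varepsilon};\mathcal{C}^{m+2}(\overline{\mathcal{A}_{2}})\Vert +\Vert p_{\varepsilon};\mathcal{C}^{m+1}(\overline{\mathcal{A}_{2}})/\mathbb{R}\Vert \leq C_{m}\,\Vert \mathbf{u}_{\varepsilon};D\Vert,
\]
I obtain the bound
\[
\Vert \widetilde{\mathbf{f}}_{\varepsilon};\mathcal{C}^{m}(\overline{B(2h/3)}\setminus B(h/3))\Vert \leq C_{m}\big(\Vert \mathbf{u}_{\varepsilon};D\Vert +\Vert \mathbf{u}_{\varepsilon};D\Vert ^{2}\big).
\]

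Next I pass to the Fourier side. Because $\widetilde{\mathbf{f}}_{\varepsilon}$ is smooth with support in $x$ contained in a fixed bounded interval, repeated integration by parts in $x$ gives, for every $N$, a bound $|k|^{N}\,|\widehat{\widetilde{\mathbf{f}}}_{\varepsilon}(k,y)|\leq C_{N}\Vert \partial _{x}^{N}\widetilde{\mathbf{f}}_{\varepsilon};L^{\infty}\Vert$; together with the fact that the $y$-support lies in a fixed compact set bounded away from the wall, this furnishes exactly the pointwise control needed to estimate the $\mathcal{W}_{\alpha}$ norm. Choosing $m=m(\alpha)$ large enough that the decay $|k|^{-N}$ dominates the anisotropic weights $\mu_{\alpha,1}$ and $\mu_{\alpha,2}$ that define $\mathcal{W}_{\alpha}$ (see \cite{Hillairet&Wittwer09}), I conclude
\[
\Vert \widehat{\widetilde{\mathbf{f}}}_{\varepsilon};\mathcal{W}_{\alpha}\Vert \leq C_{\alpha}\big(\Vert \mathbf{u}_{\varepsilon};D\Vert +\Vert \mathbf{u}_{\varepsilon};D\Vert ^{2}\big).
\]

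Finally I would invoke \textbf{Theorem \ref{thm_cvg0main}}(ii) to fix $\varepsilon_{\alpha}$ so small that $\Vert \mathbf{u}_{\varepsilon};D\Vert \leq 1$ for $\varepsilon <\varepsilon_{\alpha}$, shrinking $\varepsilon_{\alpha}$ further if necessary so that the right-hand side above lies below the smallness threshold required by \textbf{Theorem \ref{thm_existenceas}}. For such $\varepsilon$ the quadratic term is absorbed into the linear one, and \textbf{Theorem \ref{thm_existenceas}} produces an $\alpha$-solution $\bar{\mathbf{u}}_{\varepsilon}$ with $\Vert \hat{\mathbf{u}}_{\varepsilon};\mathcal{U}_{\alpha}\Vert \leq C_{\alpha}\Vert \widehat{\widetilde{\mathbf{f}}}_{\varepsilon};\mathcal{W}_{\alpha}\Vert \leq C_{\alpha}\Vert \mathbf{u}_{\varepsilon};D\Vert$, which is the claimed estimate. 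The main obstacle is the middle step: matching the polynomial decay obtained from compact support against the specific weights defining $\mathcal{W}_{\alpha}$, so that the resulting constant $C_{\alpha}$ depends only on $\alpha$ and not on $\varepsilon$. Once the support of $\widetilde{\mathbf{f}}_{\varepsilon}$ is known to be fixed, this reduces to a routine, if slightly technical, Fourier estimate.
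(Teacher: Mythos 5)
Your proposal is correct and follows essentially the same route as the paper: fixed support of $\widetilde{\mathbf{f}}_{\varepsilon}$ from (S-\emph{i}), $\mathcal{C}^{m}$ control via (S-\emph{iii}) combined with Theorem \ref{thm_cvg0main}(iii), a compact-support integration-by-parts Fourier estimate to bound $\Vert \widehat{\widetilde{\mathbf{f}}}_{\varepsilon};\mathcal{W}_{\alpha}\Vert$ by $C_{\alpha}\Vert \mathbf{u}_{\varepsilon};D\Vert$ (the paper takes $m$ to be the integer part of $\alpha+1$ where you leave $m(\alpha)$ implicit), and finally Theorem \ref{thm_cvg0main}(ii) to meet the smallness threshold of Theorem \ref{thm_existenceas}. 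The only cosmetic difference is that you absorb the quadratic term at the end rather than folding it into the smallness parameter $\eta_{0}$ from the start, which changes nothing.
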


\begin{proof}
{First, let $\eta _{0}$ be a sufficiently small parameter to be fixed later
on { and denote by $m$ the integer part of  $\alpha+1$}. Applying \textbf{Theorem \ref{thm_cvg0main}}, there exists $\varepsilon
_{\alpha ,\eta }$ such that for all $\varepsilon <\varepsilon _{\alpha ,\eta
}$ the weak solution $\mathbf{u}_{\varepsilon }$ with associated pressure $%
p_{\varepsilon }$ satisfy : 
\begin{equation*}
\Vert \mathbf{u}_{\varepsilon };\mathcal{C}^{m+2}(\overline{\mathcal{A}%
_{2}})\Vert +\Vert p_{\varepsilon };\mathcal{C}^{m +1}(\overline{%
\mathcal{A}_{2}})/\mathbb{R}\Vert \leq C_{m }\Vert \mathbf{u}%
_{\varepsilon };D\Vert \leq C_{\alpha }\eta _{0}~.
\end{equation*}%
As a consequence, the source-term $\widetilde{\mathbf{f}}_{\varepsilon
}:=TNS[\mathbf{u}_{\varepsilon },p_{\varepsilon }]$ obtained after
truncation satisfies (see (S-\emph{i}) and (S-\emph{iii})): }

\begin{itemize}
\item $\widetilde{\mathbf{f}}_{\varepsilon}$ has compact support in $%
\overline{B(3h/4)} \setminus B(h/3)$

\item $\Vert \widetilde{\mathbf{f}}_{\varepsilon };{ \mathcal{C}^{m +2}(%
\overline{\mathcal{A}_{2}}})\Vert \leq K_{\alpha }\Vert \mathbf{u}%
_{\varepsilon };\;D\Vert \leq K_{\alpha }\eta _{0}$
\end{itemize}

Denoting by ${f}$ any component of $\widetilde{\mathbf{f}}_{\varepsilon }$
we apply then the following classical computation. The function $f\in $ $%
\mathcal{C}_{c}^{\infty }(\overline{\Omega _{+}})$ has support in ${B(}2h/3)$. Hence,
the Fourier transform $\hat{f}$ of $f$ is well-defined and continuous on $%
\Omega _{+}$. Moreover we have, for $y\geq 1$ and $k\in \mathbb{R}$, 
\begin{equation*}
\hat{f}(k,y)=\int_{-2h/3}^{2h/3}e^{ikx}f(x,y)~dx~.
\end{equation*}%
Integration by parts implies the existence of a constant $C$ such that, for $y\geq 1$ and $k\in \mathbb{R}_0$, 
\begin{equation*}
\left\vert \hat{f}(k,y)\right\vert \leq C \Vert f;\mathcal{C}^{0}(\Omega
_{+})\Vert ~,\quad \text{and}\quad \left\vert \hat{f}(k,y)\right\vert \leq C%
\frac{\Vert f;\mathcal{C}^{m}(\Omega _{+})\Vert }{\left\vert k\right\vert
^{m}}~.
\end{equation*}%
Using that $\hat{f}$ has compact support in $y$, we { obtain that}%
\begin{equation*}
{
\left\vert \hat{f}(k,y)\right\vert \leq C\left[  \frac{\Vert f;\mathcal{C}^{m}(\Omega _{+})\Vert }{y^{p}\left(
1+(\left\vert k\right\vert y)^{m}\right) }+%
\frac{\Vert f;\mathcal{C}^{m}(\Omega _{+})\Vert }{y^{q}\left( 1+(\left\vert
k\right\vert y^{2})^{m}\right) }\right] ~,
}
\end{equation*}%
for arbitrary $m\in \mathbb{N}.$ In particular, there holds: 
\begin{equation}
\Vert \hat{f};\mathcal{B}_{\alpha ,p,q}\Vert \leq K_{p,q}^{\alpha }\Vert f;%
\mathcal{C}^{\alpha }(\Omega _{+})\Vert ~.  \label{eq_Fourier}
\end{equation}%
{Keeping the previous notations for the Fourier transform, we have $\Vert 
\widehat{\mathbf{f}}_{\varepsilon };\mathcal{W}_{\alpha }\Vert \leq
K_{\alpha }\Vert \mathbf{u}_{\varepsilon };\;D\Vert \leq K_{\alpha }\eta _{0}
$. Finally, for $\eta _{0}$ sufficiently small we apply }{\bf Theorem \ref%
{thm_existenceas}}.{\ This yields an $\alpha $-solution $\bar{\mathbf{u}}%
_{\varepsilon }$ for Problem~2 with source term $\widetilde{\mathbf{f}}%
_{\varepsilon }.$ Furthermore, this solution satisfies: 
\begin{equation*}
\Vert \mathbf{\hat{u}}_{\varepsilon };\mathcal{U}_{\alpha }\Vert \leq \tilde{%
C}_{\alpha }\Vert \widehat{\mathbf{f}}_{\varepsilon };\mathcal{W}_{\alpha
}\Vert \leq \tilde{C}_{\alpha }K_{\alpha }\Vert \mathbf{u}_{\varepsilon
};D\Vert ~.
\end{equation*}%
This completes the proof. }
\end{proof}

\subsection{Weak-strong uniqueness {of solution for Problem~2}}

{So far, we have shown that a weak solution $\mathbf{u}$ of Problem~1 for $%
S_{\varepsilon }$ with associated pressure $p$ provides a weak solution $%
\mathbf{\tilde{u}}$ of Problem~2 for source term $\widetilde{\mathbf{f}}=TNS\left[ 
\mathbf{u},p\right] $ by truncation. We have also shown that, for small obstacles, we can construct an $\alpha $-solution $\mathbf{%
\bar{u}}_{\varepsilon }$ for source terms $\widetilde{\mathbf{f}}_{\varepsilon }$
obtained after truncation of $\mathbf{u}_{\varepsilon }$. In this section,
we prove: }

\begin{theorem}
\label{thm_weakstrongunique} Given $\alpha >3$, there exists $\eta _{\alpha
}>0$ such that, given an $\alpha $-solution 
$\mathbf{\bar{u}}$ 
for source-term $\widetilde{\mathbf{f}}\in \mathcal{C}_{c}^{\infty }(\Omega _{+})$ such
that $\Vert \mathbf{\hat{u}};{\mathcal{U}_{\alpha }}\Vert <\eta _{\alpha },$
any weak solution $\mathbf{\tilde{u}}$ of Problem 2 with source term $%
\widetilde{\mathbf{f}}$ coincides with $\mathbf{\bar{u}}$.
\end{theorem}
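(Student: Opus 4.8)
The plan is to run a weak--strong (Serrin-type) uniqueness argument adapted to the stationary half-space problem, with the smallness of $\Vert\mathbf{\hat{u}};\mathcal{U}_\alpha\Vert$ playing the role that an integrability condition plays in the classical theory. Set $\mathbf{w}=\mathbf{\tilde{u}}-\mathbf{\bar{u}}$; since both fields lie in $D$ and are divergence-free, $\mathbf{w}\in D$, and the goal is to prove $\Vert\mathbf{w};D\Vert=0$. First I would record four scalar relations obtained by pairing the two solutions: (A) the energy inequality $\int|\nabla\mathbf{\tilde{u}}|^2\le\int\mathbf{\widetilde{f}}\cdot\mathbf{\tilde{u}}$ from the definition of weak solution; (B) the energy equality $\int|\nabla\mathbf{\bar{u}}|^2=\int\mathbf{\widetilde{f}}\cdot\mathbf{\bar{u}}$, obtained by testing the weak formulation of $\mathbf{\bar{u}}$ against itself (the cubic term vanishing by the antisymmetry of \textbf{Lemma \ref{lem_trilinear}}); (C) the weak formulation of $\mathbf{\tilde{u}}$ tested against $\mathbf{\bar{u}}$; and (D) the weak formulation of $\mathbf{\bar{u}}$ tested against $\mathbf{\tilde{u}}$.

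Forming $\mathrm{(A)}+\mathrm{(B)}-\mathrm{(C)}-\mathrm{(D)}$, the source integrals cancel exactly and the quadratic $\int\nabla:\nabla$ contributions assemble into $\int|\nabla\mathbf{w}|^2$. The cubic contributions collapse: using the antisymmetry $\int[(\mathbf{\bar{u}}+\mathbf{e}_1)\cdot\nabla\mathbf{\bar{u}}]\cdot\mathbf{\tilde{u}}=-\int[(\mathbf{\bar{u}}+\mathbf{e}_1)\cdot\nabla\mathbf{\tilde{u}}]\cdot\mathbf{\bar{u}}$, the linearity in the convecting slot makes the $\mathbf{e}_1$ drop out, and $\int[\mathbf{w}\cdot\nabla\mathbf{\bar{u}}]\cdot\mathbf{\bar{u}}=0$ (valid because $\nabla\cdot\mathbf{w}=0$ and all boundary fluxes vanish). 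I therefore expect to reach
\[
\int_{\Omega_+}|\nabla\mathbf{w}|^2\,d\mathbf{x}\ \le\ \int_{\Omega_+}\left[\mathbf{w}\cdot\nabla\mathbf{w}\right]\cdot\mathbf{\bar{u}}\,d\mathbf{x}~.
\]

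To close the estimate I would use only the pointwise decay of the $\alpha$-solution together with Hardy's inequality, so that no control on $\nabla\mathbf{\bar{u}}$ is needed. Since $|\mathbf{\bar{u}}(x,y)|\le C\Vert\mathbf{\hat{u}};\mathcal{U}_\alpha\Vert\,y^{-3/2}$ and $y\ge 1$,
\[
\left|\int_{\Omega_+}\left[\mathbf{w}\cdot\nabla\mathbf{w}\right]\cdot\mathbf{\bar{u}}\,d\mathbf{x}\right|\le C\Vert\mathbf{\hat{u}};\mathcal{U}_\alpha\Vert\left(\int_{\Omega_+}\frac{|\mathbf{w}|^2}{y^2}\,d\mathbf{x}\right)^{1/2}\Vert\mathbf{w};D\Vert\le C'\Vert\mathbf{\hat{u}};\mathcal{U}_\alpha\Vert\,\Vert\mathbf{w};D\Vert^2~,
\]
the last step being \textbf{Proposition \ref{prop_Hardy}}. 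Choosing $\eta_\alpha$ so that $C'\eta_\alpha<1$ and using $\Vert\mathbf{\hat{u}};\mathcal{U}_\alpha\Vert<\eta_\alpha$ gives $(1-C'\eta_\alpha)\Vert\mathbf{w};D\Vert^2\le 0$, hence $\mathbf{w}=0$ and $\mathbf{\tilde{u}}=\mathbf{\bar{u}}$.

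The main obstacle is the rigorous justification of the identities (B)--(D): the weak formulations hold a priori only for test fields in $\mathcal{D}$, so one must extend them to the non-compactly-supported fields $\mathbf{\bar{u}}$ and $\mathbf{\tilde{u}}$ and legitimize the integrations by parts in the cubic terms. The genuinely delicate point is (C), where the generic weak solution $\mathbf{\tilde{u}}$ sits in both non-test slots and is controlled \emph{only} in $D$ (no global integrability --- the Stokes-paradox obstruction). The cubic term $\int[(\mathbf{\tilde{u}}+\mathbf{e}_1)\cdot\nabla\mathbf{\tilde{u}}]\cdot\mathbf{\bar{u}}$ must be shown absolutely convergent and stable under approximation by trading the $D$-bound on $\mathbf{\tilde{u}}$ against the decay of $\mathbf{\bar{u}}$ through the weighted estimate $\int|\mathbf{\tilde{u}}|^2y^{-2}\le C\Vert\mathbf{\tilde{u}};D\Vert^2$; by contrast, $\mathbf{\bar{u}}\in H_0^1(\Omega_+)\subset L^4$ renders the identities with $\mathbf{\bar{u}}$ alone in the non-test slots routine. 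Concretely, I would approximate $\mathbf{\bar{u}}$ in energy norm by the divergence-free fields $\nabla^\bot(\theta_j\bar\psi)$ obtained by truncating its stream function, and pass to the limit using the $y^{-3/2}$ decay and Hardy's inequality; once these convergences are secured, the algebra of the previous paragraphs and the final absorption are immediate.
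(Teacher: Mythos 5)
Your proposal is correct and follows essentially the same route as the paper: your identities (A)--(D) are the paper's energy inequalities together with \eqref{H1}--\eqref{H2}, the cancellation uses the antisymmetry \eqref{H3}, the closing bound is the paper's \eqref{H4} written in the integrated-by-parts form $\int_{\Omega_+}[\mathbf{w}\cdot\nabla\mathbf{w}]\cdot\mathbf{\bar u}\,d\mathbf{x}$ (controlled via the $y^{-3/2}$ decay of $\mathbf{\bar u}$ and Hardy, rather than via the $y^{-5/2}$ decay of $\nabla\mathbf{\bar u}$), and your plan for justifying the identities by truncating the stream function of $\mathbf{\bar u}$ is exactly Proposition~\ref{prop_approxalphasol}. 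The only point to flag is that extending the weak formulation of $\mathbf{\bar u}$ to the test field $\mathbf{\tilde u}\in D$ (your (D)) is not rendered routine by $\mathbf{\bar u}\in L^4$ alone --- it still requires the weighted gradient bounds $y\nabla\mathbf{\bar u}\in L^2$, $y^2\nabla\mathbf{\bar u}\in L^\infty$ paired with Hardy on $\mathbf{\tilde u}/y$, as in Proposition~\ref{prop_contb} --- but these bounds are already part of your toolkit.
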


{Consequently, choosing $\alpha =4,$ for instance, and a 
sufficiently small obstacle, {we}} have, by {\bf Lemma \ref{lem_petitesse}} {{that $\Vert {%
\mathbf{\hat{u}}}_{\varepsilon };\mathcal{U}_{\alpha }\Vert \leq \eta
_{\alpha }.$ Hence, we 
can apply this theorem to $\mathbf{\tilde{u}}_{\varepsilon }:=\mathbf{T}_{v}[%
\mathbf{u}_{\varepsilon }]$.} This yields that $\mathbf{\tilde{u}}%
_{\varepsilon }$ coincides with $\mathbf{\bar{u}}_{\varepsilon }$}. Since by
construction the weak solution $\mathbf{\tilde{u}}_{\varepsilon }$ coincides
with $\mathbf{u}_{\varepsilon }$ outside a compact set, $\mathbf{u}%
_{\varepsilon }$ also coincides with the $\alpha $-solution $\mathbf{\bar{u}}%
_{\varepsilon }$ outside a compact set {and inherits its asymptotic
properties}. {Thus, this weak-strong uniqueness result ends the proof of }%
{\bf Theorem \ref{mainexistencetheorem}}.

{\bf Theorem \ref{thm_weakstrongunique}} is a generalization of \cite[Theorem 8]%
{Hillairet&Wittwer09}, where it was shown that for small $\widetilde{\mathbf{f}}$ any
weak solution $\mathbf{\tilde{u}}\in H_{0}^{1}(\Omega _{+})$ of Problem$~$1,
is an $\alpha $-solution. This theorem was not general enough for the
present purposes because weak solutions that are obtained by truncation
merely satisfy $\mathbf{\tilde{u}}\in D$. The remainder of this section is
devoted to this new uniqueness proof.

\subsubsection{{Sketch of proof for} Theorem \protect\ref%
{thm_weakstrongunique}}

We set $\alpha >3$ and fix $\mathbf{\bar{u}}$ an $\alpha $-solution with a
source-term $\widetilde{\mathbf{f}}$. It has been shown in \cite[Section 3]%
{Hillairet&Wittwer09}, that such an $\alpha $-solution is also a weak
solution of Problem 2 for $\widetilde{\mathbf{f}}$. Hence, we have (\ref{eq_wfv}) and (%
\ref{eq_NRJv}) for $\tilde{\mathbf{u}}$ and $\mathbf{\bar{u}}.$ To estimate $\tilde{\mathbf{%
u}}-\mathbf{\bar{u}}$, we use the $D-$norm 
\begin{equation*}
\Vert \tilde{\mathbf{u}}-\mathbf{\bar{u}};D\Vert ^{2}=\Vert \tilde{\mathbf{u}};D\Vert
^{2}+\Vert \mathbf{\bar{u}};D\Vert ^{2}-2\int_{\Omega _{+}}\nabla \tilde{\mathbf{u}}%
:\nabla \mathbf{\bar{u}}~d\mathbf{x}~,
\end{equation*}%
where, applying (\ref{eq_NRJv}) : 
\begin{equation*}
\Vert \tilde{\mathbf{u}};D\Vert ^{2}\leq \int_{\Omega _{+}}\widetilde{\mathbf{f}}\cdot \tilde{\mathbf{u}}%
~d\mathbf{x}~,\qquad \text{and }\qquad \Vert \mathbf{\bar{u}};D\Vert
^{2}\leq \int_{\Omega _{+}}\widetilde{\mathbf{f}}\cdot \mathbf{\bar{u}}~d\mathbf{x}~.
\end{equation*}%
We now assume that
\begin{equation}
\int_{\Omega _{+}}\nabla \tilde{\mathbf{u}}:\nabla \mathbf{\bar{u}}~d\mathbf{x}%
+\int_{\Omega _{+}}(\tilde{\mathbf{u}}+\mathbf{e}_{1})\cdot \nabla \tilde{\mathbf{u}}\cdot 
\mathbf{\bar{u}}~d\mathbf{x}=\int_{\Omega _{+}}\widetilde{\mathbf{f}}\cdot \mathbf{\bar{u%
}}~d\mathbf{x}~,  \tag{H1}  \label{H1}
\end{equation}%
and that\begin{equation}
\int_{\Omega _{+}}\nabla \mathbf{\bar{u}}:\nabla {\tilde{\mathbf{u}}}~d\mathbf{x}%
+\int_{\Omega _{+}}(\mathbf{\bar{u}}+\mathbf{e}_{1})\cdot \nabla \mathbf{%
\bar{u}}\cdot {\tilde{\mathbf{u}}}~d\mathbf{x}=\int_{\Omega _{+}}\widetilde{\mathbf{f}}\cdot {%
\tilde{\mathbf{u}}}~d\mathbf{x}~.  \tag{H2}  \label{H2}
\end{equation}%
These assumptions are proved below. Combining 
\eqref{H1} and \eqref{H2} yields 
\begin{equation*}
\Vert \tilde{\mathbf{u}}-\mathbf{\bar{u}};D\Vert ^{2}\leq \int_{\Omega _{+}}(\mathbf{%
u}+\mathbf{e}_{1})\cdot \nabla \tilde{\mathbf{u}}\cdot \mathbf{\bar{u}}+\int_{\Omega
_{+}}(\mathbf{\bar{u}}+\mathbf{e}_{1})\cdot \nabla \mathbf{\bar{u}}\cdot {%
\tilde{\mathbf{u}}}~.
\end{equation*}%
Next we assume that for any $\alpha $-solution $\mathbf{\bar{u}}$ 
we have: 
\begin{equation}
\int_{\Omega _{+}}(\mathbf{v}+\mathbf{e}_{1})\cdot \nabla \mathbf{w}\cdot 
\mathbf{\bar{u}}~d\mathbf{x}=-\int_{\Omega _{+}}(\mathbf{v}+\mathbf{e}%
_{1})\cdot \nabla \mathbf{\bar{u}}\cdot \mathbf{w}~d\mathbf{x},\qquad
\forall \,(\mathbf{v,w})\in D^{2}~.  \tag{H3}  \label{H3}
\end{equation}
This assumption is also proved below. Together with the previous inequality
we get
\begin{equation*}
\begin{array}{rcl}
\Vert \tilde{\mathbf{u}}-\mathbf{\bar{u}};D\Vert ^{2} & \leq  & -\displaystyle{%
\int_{\Omega _{+}}}(\tilde{\mathbf{u}}+\mathbf{e}_{1})\cdot \nabla {\mathbf{\bar{u}}}%
\cdot \tilde{\mathbf{u}}~d\mathbf{x}+\displaystyle{\int_{\Omega _{+}}}(\mathbf{\bar{u%
}}+\mathbf{e}_{1})\cdot \nabla \mathbf{\bar{u}}\cdot {\tilde{\mathbf{u}}}~d\mathbf{x}
\\[8pt]
& \leq  & \displaystyle{\int_{\Omega _{+}}}(\mathbf{\bar{u}}-{\tilde{\mathbf{u}}}%
)\cdot \nabla \mathbf{\bar{u}}\cdot {\tilde{\mathbf{u}}}~d\mathbf{x} \\[8pt]
& \leq  & \displaystyle{\int_{\Omega _{+}}}(\mathbf{\bar{u}}-{\tilde{\mathbf{u}}}%
)\cdot \nabla \mathbf{\bar{u}}\cdot ({\tilde{\mathbf{u}}}-\mathbf{\bar{u}})~d\mathbf{%
x}~,%
\end{array}%
\end{equation*}%
{ as it yields from \eqref{H3} :
$$
 \displaystyle{\int_{\Omega _{+}}}(\mathbf{\bar{u}}-{\tilde{\mathbf{u}}}%
)\cdot \nabla \mathbf{\bar{u}}\cdot {{\mathbf{\bar u}}}~d\mathbf{x} = 0.
$$
}
Finally, we assume that
for any $\alpha $-solution $\mathbf{\bar{u}}$ 
we have:
\begin{equation}
\left\vert \int_{\Omega _{+}}\mathbf{v}\cdot \nabla \mathbf{\bar{u}\cdot w}~d%
\mathbf{x}\right\vert \leq K\Vert \mathbf{\hat{u}};\mathcal{U}_{\alpha
}\Vert ~\Vert \mathbf{v};D\Vert ~\Vert \mathbf{w};D\Vert ~,\quad \forall (%
\mathbf{v,w})\in D^{2}~,  \tag{H4}  \label{H4}
\end{equation}%
and we get 
\begin{equation*}
\begin{array}{rcl}
\Vert \tilde{\mathbf{u}}-\mathbf{\bar{u}};D\Vert ^{2} & \leq  & C\Vert {\mathbf{\hat{%
u}}};\mathcal{U}_{\alpha }\Vert ~\Vert \tilde{\mathbf{u}}-\mathbf{\bar{u}};D\Vert
^{2}~.%
\end{array}%
\end{equation*}%
For $\eta _{\alpha }$ sufficiently small we have $C\Vert {\mathbf{\hat{u}}} \ ; \ {\mathcal{U}_{\alpha }}\Vert <1/2$, 
so that $\Vert \tilde{\mathbf{u}}-\mathbf{\bar{u}};D\Vert =0$. 
This completes the proof up to the technical points
\eqref{H1}--\eqref{H4} which are proved in the following sections.

\subsubsection{Proof of (H2) and (H4)}

\label{sec_H24}

We first establish some additional conditions for the trilinear form 
\begin{equation}
\int_{\Omega _{+}}(\mathbf{u}+\mathbf{e}_{1})\cdot \nabla \mathbf{v}\cdot 
\mathbf{w}~d\mathbf{x}  \label{eq_trilinearb}
\end{equation}%
to be well defined. The main tool is the Hardy inequality for functions in $%
D $:

\begin{proposition}
\label{prop_Hardy} For all $\mathbf{w}\in D$, 
\begin{equation*}
\int_{\Omega _{+}}\dfrac{|\mathbf{w}(x,y)|^{2}}{(y-1)^{2}}~dx~dy\leq 4\Vert 
\mathbf{w};D\Vert ^{2}~.
\end{equation*}
\end{proposition}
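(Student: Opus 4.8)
The plan is to reduce the statement to the classical one-dimensional Hardy inequality applied in the vertical variable for each fixed $x$, and then to integrate over $x$; neither the vector structure nor the divergence-free condition plays any role, since the estimate holds componentwise and uses only that functions in $\mathcal{D}$ vanish near $\partial\Omega_+=\{y=1\}$. First I would prove the bound for $\mathbf{w}\in\mathcal{D}$, that is smooth and compactly supported in the open set $\Omega_+$; the general case $\mathbf{w}\in D$ then follows by the density of $\mathcal{D}$ in $D$ together with Fatou's lemma.

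For $\mathbf{w}\in\mathcal{D}$, fix $x\in\mathbb{R}$ and set $g(s)=\mathbf{w}(x,1+s)$ for $s>0$. Since $\mathrm{Supp}(\mathbf{w})$ is a compact subset of $\Omega_+$, the map $g$ is smooth and vanishes both near $s=0$ and near $s=\infty$. The core of the argument is the scalar Hardy inequality, which I would obtain by integrating by parts using $\frac{d}{ds}(-1/s)=1/s^2$. Because $g$ vanishes at both ends, the boundary terms drop and one finds $\int_0^\infty \frac{|g(s)|^2}{s^2}\,ds = 2\int_0^\infty \frac{g(s)\cdot g'(s)}{s}\,ds$. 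Applying the pointwise bound $|g\cdot g'|\le|g|\,|g'|$ followed by the Cauchy--Schwarz inequality gives $I\le 2\sqrt{I}\,\sqrt{J}$, where $I=\int_0^\infty |g|^2/s^2\,ds$ and $J=\int_0^\infty|g'|^2\,ds$, whence $I\le 4J$. Undoing the substitution $s=y-1$ yields, for each fixed $x$, the bound $\int_1^\infty \frac{|\mathbf{w}(x,y)|^2}{(y-1)^2}\,dy \le 4\int_1^\infty |\partial_y\mathbf{w}(x,y)|^2\,dy$.

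Integrating this inequality over $x\in\mathbb{R}$ and using $|\partial_y\mathbf{w}|^2\le|\nabla\mathbf{w}|^2$ gives $\int_{\Omega_+}\frac{|\mathbf{w}|^2}{(y-1)^2}\,dx\,dy \le 4\int_{\Omega_+}|\partial_y\mathbf{w}|^2\,dx\,dy \le 4\Vert\mathbf{w};D\Vert^2$, which establishes the claim for $\mathbf{w}\in\mathcal{D}$. To pass to a general $\mathbf{w}\in D$, I would take $\mathbf{w}_n\in\mathcal{D}$ with $\mathbf{w}_n\to\mathbf{w}$ in $D$; applying the inequality already proved to the differences $\mathbf{w}_n-\mathbf{w}_m$ shows that $(\mathbf{w}_n)$ is Cauchy in the weighted space $L^2(\Omega_+;(y-1)^{-2}\,dx\,dy)$, hence converges there to a limit which coincides with $\mathbf{w}$. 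Extracting a subsequence converging almost everywhere and invoking Fatou's lemma then gives $\int_{\Omega_+}\frac{|\mathbf{w}|^2}{(y-1)^2}\,dx\,dy \le \liminf_n 4\Vert\mathbf{w}_n;D\Vert^2 = 4\Vert\mathbf{w};D\Vert^2$.

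The computation itself is entirely elementary, so there is no genuine analytic obstacle here. The one point that deserves a little care is the final limiting argument: one must justify that the weighted-$L^2$ limit of the approximating sequence is indeed the element $\mathbf{w}\in D$ (which is a priori only an abstract completion object), and this is where the weighted convergence, which forces convergence in $L^2_{\mathrm{loc}}$ away from $\{y=1\}$, is used to identify the limit before applying Fatou.
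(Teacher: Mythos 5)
Your proof is correct, and it is the standard one-dimensional Hardy argument (integration by parts against $-1/s$, Cauchy--Schwarz, integration in $x$, then density) that the paper implicitly relies on: the paper states Proposition~\ref{prop_Hardy} without proof, treating it as classical. Your care with the final limiting step --- identifying the weighted-$L^2$ limit with the abstract completion element $\mathbf{w}\in D$ --- is exactly the point that deserves attention, since it is this inequality that allows $D$ to be realized as a space of functions in the first place.
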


\noindent Therefore, we have the following continuity result for the
trilinear form:

\begin{proposition}
\label{prop_contb} There exists a constant $C$ such that for all $(\mathbf{%
v,w})\in D$ and all 
\begin{equation*}
\mathbf{\bar{u}}\in H_{loc}^{1}(\Omega _{+},d\mathbf{x})\cap L^{2}(\Omega
_{+},d\mathbf{x})
\;  { \text{ such that } \; 
\| y \mathbf{u}   \ ;  \ L^{\infty }(\Omega _{+}) \| < \infty\,,
}
\end{equation*}%
and%
\begin{equation*}
\nabla \mathbf{\bar{u}}\in H_{loc}^{1}(\Omega _{+},d\mathbf{x})\cap
L^{2}(\Omega _{+},{y^{2}}d\mathbf{x})
\;  {  \text{ such that }  \;
\| y^2 \nabla \mathbf{\bar u} \ ; \ L^{\infty }(\Omega _{+} \| < \infty \,,}
\end{equation*}%
respectively, we have%
\begin{equation}
\left\vert \int_{\Omega _{+}}(\mathbf{v}+\mathbf{e}_{1})\cdot \nabla \mathbf{%
w}\cdot \mathbf{\bar{u}}~d\mathbf{x}\right\vert \leq C~\left( 1+\Vert 
\mathbf{v};D\Vert \right) ~\Vert \mathbf{w};D\Vert ~\left( \Vert \mathbf{%
\bar{u}};L^{2}(\Omega _{+})\Vert +\Vert y\mathbf{\bar{u}};L^{\infty }(\Omega
_{+})\Vert \right) ~,  \label{eq_contb1}
\end{equation}%
and 
\begin{equation}
\left\vert \int_{\Omega _{+}}(\mathbf{v}+\mathbf{e}_{1})\cdot \nabla \mathbf{%
\bar{u}}\cdot \mathbf{w}~d\mathbf{x}\right\vert \leq C~\left( 1+\Vert 
\mathbf{v};D\Vert \right) ~ { \Vert \mathbf{w}/y; L^2(\Omega_+)\Vert} ~\left( \Vert y\nabla 
\mathbf{\bar{u}};L^{2}(\Omega _{+})\Vert +\Vert y^{2}\nabla \mathbf{\bar{u}};%
{L^{\infty }(\Omega _{+})}\Vert \right) ~.  \label{eq_contb2}
\end{equation}
\end{proposition}

\begin{proof}
We denote by $I_{1}$ and $I_{2}$ the integrals in (\ref{eq_contb1}) and (\ref%
{eq_contb2}), respectively. Let $(\mathbf{\bar{u}},\mathbf{v},\mathbf{w})\in
H_{loc}^{1}(\Omega _{+})\times D^{2}$. If $\Vert \mathbf{\bar{u}}%
;L^{2}(\Omega _{+})\Vert +\Vert y\mathbf{\bar{u}};L^{\infty }(\Omega
_{+})\Vert <\infty $, we can split $I_{1}$ into two integrals 
\begin{equation}
I_{1}=\int_{\Omega _{+}}\mathbf{v}\cdot \nabla \mathbf{w}\cdot \mathbf{\bar{u%
}}~d\mathbf{x}-\int_{\Omega _{+}}\partial _{x}\mathbf{w}\cdot \mathbf{\bar{u}%
}~d\mathbf{x}~.  \label{i1}
\end{equation}%
The second integral on the right hand side of (\ref{i1}) can be bounded by
the Cauchy Schwarz inequality. For the first integral we have 
\begin{eqnarray}
\left\vert \int_{\Omega _{+}}\mathbf{v}\cdot \nabla \mathbf{w}\cdot \mathbf{%
\bar{u}}~d\mathbf{x}\right\vert  &=&\left\vert \int_{\Omega _{+}}\dfrac{%
\mathbf{v}}{y}\cdot \nabla \mathbf{w}\cdot y\mathbf{\bar{u}}~d\mathbf{x}%
\right\vert  \\
&\leq &\Vert \mathbf{v}/{y};L^{2}(\Omega _{+})\Vert ~\Vert \nabla \mathbf{w}%
;L^{2}(\Omega _{+})\Vert ~\Vert y\mathbf{\bar{u}};L^{\infty }(\Omega
_{+})\Vert ~,  \notag
\end{eqnarray}%
and we obtain the bound on $I_{1}$ by applying the Hardy inequality (note
that $y\geq y-1$). The integral $I_{2}$ is bounded similarly.
\end{proof}

\medskip

{This proposition is suitable for $\alpha $-solutions. Indeed, if $\mathbf{%
\bar{u}}=(u,v)$, is an $\alpha $-solution, for $\alpha >3$, denoting by $%
\mathbf{\hat{u}}=(\hat{u},\hat{v})$ (respectively $\mathbf{\hat{u}}_{x}=(%
\hat{u}_{x},\hat{v}_{x})$ and $\mathbf{\hat{u}}_{y}=(\hat{u}_{y},\hat{v}%
_{y}) $ ) the Fourier transform with respect to $x$ of $\mathbf{\bar{u}}$
(respectively $\partial _{x}\mathbf{\bar{u}}$ and $\partial _{y}\mathbf{\bar{%
u}}$) there holds : }

\begin{itemize}
\item $(\hat{u},\hat{v})\in {\mathcal{B}}_{\alpha ,1/2,0}\times {\mathcal{B}}%
_{\alpha ,1/2,1}$,

\item $(\hat{u}_x,\hat{v}_x )\in {\mathcal{B}}_{\alpha -1,3/2,2}\times {%
\mathcal{B}}_{\alpha -1,3/2,3}$

\item $(\hat{u}_y,\hat{v}_y)\in {\mathcal{B}}_{\alpha -1,3/2,1}\times {%
\mathcal{B}}_{\alpha -1,3/2,2}$
\end{itemize}

with :%
\begin{equation*}
\Vert {\hat{u}}_{x};{\mathcal{B}}_{\alpha -1,3/2,2}\times {\mathcal{B}%
}_{\alpha -1,3/2,3}\Vert +\Vert {\hat{u}}_{y};{\mathcal{B}}_{\alpha
-1,3/2,1}\times {\mathcal{B}}_{\alpha -1,3/2,2}\Vert \leq C_{\alpha }\Vert 
\mathbf{\hat{u}};\mathcal{U}_{\alpha }\Vert ~.
\end{equation*}%
(see \cite[pp. 685-686]{Hillairet&Wittwer09} for more details). Moreover, we have:

\begin{proposition}
\label{prop_decayB} Let $p$, $q>0$, $\alpha >1$, $s\in \lbrack 2,\infty ]$
and let $f$ be the inverse Fourier transform of $\hat{f}\in {\mathcal{B}}%
_{\alpha ,p,q}$. Then there exists a constant $C$ depending only on $\alpha $
and $s$ such that, for any $y>1$, $f(\cdot ,y)\in L^{s}(\mathbb{R})$, and%
\begin{equation*}
\Vert f(\cdot ,y);L^{s}(\mathbb{R})\Vert \leq \dfrac{C}{y^{e}}\Vert \hat{f};{%
\mathcal{B}_{\alpha ,p,q}}\Vert ~.
\end{equation*}%
where $e=\min (1-1/s+p,2(1-1/s)+q)$.
\end{proposition}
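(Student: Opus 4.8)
The plan is to pass to the Fourier side and exploit the Hausdorff--Young inequality, which is exactly the tool adapted to the range $s\in[2,\infty]$. Let $s'\in[1,2]$ denote the conjugate exponent, $1/s+1/s'=1$, so that $1-1/s=1/s'$. Since $\hat f(\cdot,y)\in\mathcal{B}_{\alpha,p,q}$ the pointwise majorant given by the norm shows that $\hat f(\cdot,y)$ decays in $k$ like the weights $\mu_{\alpha,r}$, which lie in $L^1(\mathbb R)\cap L^\infty(\mathbb R)$ as soon as $\alpha>1$; hence $\hat f(\cdot,y)\in L^{s'}(\mathbb R)$ and its inverse Fourier transform $f(\cdot,y)$ is well defined. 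Hausdorff--Young then yields a constant $C_s$ with
\begin{equation*}
\Vert f(\cdot,y);L^s(\mathbb R)\Vert \leq C_s\,\Vert \hat f(\cdot,y);L^{s'}(\mathbb R)\Vert~,
\end{equation*}
so it remains to estimate the right-hand side. Using the definition of the $\mathcal{B}_{\alpha,p,q}$-norm we have the pointwise bound $|\hat f(k,y)|\leq \Vert\hat f;\mathcal{B}_{\alpha,p,q}\Vert\big(y^{-p}\mu_{\alpha,1}(k,y)+y^{-q}\mu_{\alpha,2}(k,y)\big)$, and by the triangle inequality in $L^{s'}$ the problem reduces to computing $\Vert\mu_{\alpha,r}(\cdot,y);L^{s'}(\mathbb R)\Vert$ for $r=1,2$.

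The second step is this last computation, which is a pure scaling. With the change of variables $u=ky^{r}$ one finds
\begin{equation*}
\int_{\mathbb R}\frac{dk}{\big(1+(|k|y^{r})^{\alpha}\big)^{s'}}=\frac{1}{y^{r}}\int_{\mathbb R}\frac{du}{(1+|u|^{\alpha})^{s'}}=:\frac{I_{\alpha,s'}}{y^{r}}~,
\end{equation*}
so that $\Vert\mu_{\alpha,r}(\cdot,y);L^{s'}(\mathbb R)\Vert=I_{\alpha,s'}^{1/s'}\,y^{-r/s'}=C\,y^{-r(1-1/s)}$. The integral $I_{\alpha,s'}$ is finite precisely when $\alpha s'>1$; this is the one point that has to be checked, and it holds because $\alpha>1$ and $s'\geq1$, which also covers the endpoint $s=\infty$ (where $s'=1$, $I_{\alpha,1}<\infty$, and Hausdorff--Young degenerates to the trivial bound $\Vert f(\cdot,y);L^\infty\Vert\leq\Vert\hat f(\cdot,y);L^1\Vert$). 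I regard the convergence condition $\alpha s'>1$ as the only genuine obstacle; everything else is bookkeeping.

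Combining the three estimates and recalling $1-1/s=1/s'$ gives
\begin{equation*}
\Vert f(\cdot,y);L^s(\mathbb R)\Vert \leq C\,\Vert\hat f;\mathcal{B}_{\alpha,p,q}\Vert\left(\frac{1}{y^{\,p+(1-1/s)}}+\frac{1}{y^{\,q+2(1-1/s)}}\right)~.
\end{equation*}
Finally, since $y>1$ each summand is controlled by the slowest-decaying one, $y^{-a}+y^{-b}\leq 2\,y^{-\min(a,b)}$, and with $e=\min\big(1-1/s+p,\,2(1-1/s)+q\big)$ this is exactly the claimed bound $\Vert f(\cdot,y);L^s(\mathbb R)\Vert\leq C\,y^{-e}\Vert\hat f;\mathcal{B}_{\alpha,p,q}\Vert$, absorbing the Hausdorff--Young constant and $I_{\alpha,s'}^{1/s'}$ into $C$ (which thus depends only on $\alpha$ and $s$). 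Any $2\pi$ discrepancy between the Fourier conventions used in the paper is harmless, as it affects only this constant.
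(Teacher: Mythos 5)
Your proof is correct and follows essentially the same route as the paper's: the pointwise majorant from the $\mathcal{B}_{\alpha,p,q}$-norm, the Hausdorff--Young inequality $\Vert f(\cdot,y);L^{s}\Vert\leq C_{s}\Vert\hat f(\cdot,y);L^{s'}\Vert$, and the scaling computation giving $y^{-r/s'}$ for the weight $\mu_{\alpha,r}$, with convergence guaranteed by $\alpha s'>1$. The only differences are cosmetic (you make the integrability condition and the $s=\infty$ endpoint explicit, which the paper leaves implicit).
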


\begin{proof}
Given $\hat{f}\in \mathcal{B}_{\alpha ,p,q}$, and $y>1$, the function $f(x,y)
$ is the inverse Fourier transform with respect to $k$ of the function $\hat{%
f}(k,y)$, which is continuous on $\mathbb{R}_{0}$ and satisfies for $k\in 
\mathbb{R}_{0}$ 
\begin{equation*}
|\hat{f}(k,y)|\leq \left( \dfrac{1}{y^{p}(1+(|k|y)^{\alpha })}+\dfrac{1}{%
y^{q}(1+(|k|y^{2})^{\alpha }}\right) \Vert \hat{f};{\mathcal{B}_{\alpha ,p,q}}%
\Vert ~.
\end{equation*}%
Therefore $\hat{f}(\cdot ,y)\in L^{r}(\mathbb{R})$ for all $r\in \lbrack 1,2]
$ so that $f\in L^{s}(\mathbb{R})$ for all $s\in \lbrack 2,\infty ]$.
Moreover, given $s\geq 2$ and $r\leq 2$ the conjugate exponent,\textit{\ i.e.%
} $\frac{1}{r}+\frac{1}{s}=1$, there exists a constant $C_{s}$, such that%
\begin{equation}
\Vert f(\cdot ,y);L^{s}(\mathbb{R})\Vert \leq C_{s}\Vert \hat{f}(\cdot
,y);L^{r}(\mathbb{R})\Vert ~.  \label{ftn}
\end{equation}%
By a scaling argument, we have%
\begin{equation*}
\int_{-\infty }^{\infty }\dfrac{1}{(1+|k|y)^{r\alpha }}d\text{$k$}\leq 
\dfrac{C_{r,\alpha }^{1}}{y}~,\quad \text{and}\quad \int_{-\infty }^{\infty }%
\dfrac{1}{(1+|k|y^{2})^{r\alpha }}d\text{$k$}\leq \dfrac{C_{r,\alpha }^{2}}{%
y^{2}}~,
\end{equation*}%
which, together with (\ref{ftn}) gives 
\begin{equation*}
\Vert f(\cdot ,y);L^{s}(\mathbb{R})\Vert \leq C_{\alpha ,r}\left( \dfrac{1}{%
y^{p+\frac{1}{r}}}+\dfrac{1}{y^{q+\frac{2}{r}}}\right) \Vert \hat{f};{%
\mathcal{B}_{\alpha ,p,q}}\Vert ~,
\end{equation*}%
as required.
\end{proof}

{\bf Proposition \ref{prop_decayB}} implies that, if $\hat{f}\in \mathcal{B}%
_{\alpha ,p,q}$ for $p>0$, $q>0$ and $\alpha >1$, then $f\in L^{2}(\Omega
_{+})$ and we have, for all $(x,y)\in \Omega _{+}$, 
\begin{equation*}
|f(x,y)|\leq \dfrac{C}{y^{\min (p+1,q+2)}}\Vert \hat{f};{\mathcal{B}_{\alpha
,p,q}}\Vert ~.
\end{equation*}%
In particular, for an $\alpha $-solution $\mathbf{\bar{u}}=(u,v)$, we have $%
\mathbf{\bar{u}}\in L^{2}(\Omega _{+})$, $y\nabla \mathbf{\bar{u}\in }%
L^{2}(\Omega _{+})$, and according to the remark before the proposition,
there holds : {%
\begin{equation}
|u(x,y)|+|v(x,y)|\leq \dfrac{C}{y^{3/2}}\Vert \mathbf{\hat{u}};\mathcal{U}%
_{\alpha }\Vert ~,  \label{eq_decayuv}
\end{equation}%
\begin{equation}
|\nabla u(x,y)|+|\nabla v(x,y)|\leq \dfrac{C}{y^{5/2}}\Vert \mathbf{\hat{u}};%
\mathcal{U}_{\alpha }\Vert ~,  \label{eq_decaydudv}
\end{equation}%
} for all $(x,y)\in \Omega _{+}$. Consequently, we can use the bound (\ref%
{eq_contb2}) for $\mathbf{w}\in \mathcal{D}$, and we find that{%
\begin{equation*}
\left\vert \int_{\Omega _{+}}(\mathbf{\bar{u}}+\mathbf{e}_{1})\cdot \nabla 
\mathbf{\bar{u}}\cdot \mathbf{w}~d\mathbf{x}\right\vert \leq C~\left( \Vert 
\mathbf{\hat{u}};\mathcal{U}_{\alpha }\Vert +1\right) ~\Vert \mathbf{\hat{u}}%
;\mathcal{U}_{\alpha }\Vert ~\Vert \mathbf{w};D\Vert ~.
\end{equation*}%
} This implies that for 
the $\alpha $-solution $\mathbf{\bar{u}}$ the linear form $L_{\mathbf{\bar{u}}}$,%
\begin{equation*}
L_{\mathbf{\bar{u}}}[\mathbf{w]=}\int_{\Omega _{+}}(\mathbf{\bar{u}}+\mathbf{%
e}_{1})\cdot \nabla \mathbf{\bar{u}}\cdot \mathbf{w}~d\mathbf{x}
\end{equation*}%
is continuous on $D$, so that the weak formulation (\ref{eq_wfv}) for $%
\mathbf{\bar{u}}$ can be extended to $D$. This completes the proof of assumption \eqref{H2}.

\bigskip

\noindent With similar arguments we obtain, that for arbitrary $(\mathbf{v,w}%
)\in D^{2}$%
\begin{eqnarray*}
\left\vert \int_{\Omega _{+}}\mathbf{v}\cdot \nabla \mathbf{\bar{u}\cdot w}~d%
\mathbf{x}\right\vert  &\leq &K~\Vert y^{2}\nabla \mathbf{\bar{u}};{%
L^{\infty }(\Omega _{+})}\Vert ~\Vert \mathbf{v};D\Vert ~\Vert \mathbf{w}%
;D\Vert  \\
&\leq &K\Vert \mathbf{\hat{u}};\mathcal{U}_{\alpha }\Vert ~\Vert \mathbf{v}%
;D\Vert ~\Vert \mathbf{w};D\Vert ~.
\end{eqnarray*}
This completes the proof of assumption \eqref{H4}.

\subsubsection{Proof of (H1)}

\label{sec_H1}

The following proposition shows that $\alpha $-solutions are approximated
by velocity-fields of compact support:

\begin{proposition}
\label{prop_approxalphasol} Let $\alpha >3$ and let $\mathbf{\bar{u}}:=(u,v)$
be an $\alpha $-solution. Then there exists a sequence $(\mathbf{\bar u}_n)_{n\in \mathbb{N}}\in D^{\mathbb{N}}$, such that 
for any $n \in \mathbb N$
\begin{enumerate}
\item[$i)$] $\mathbf{\bar u}_n \in \mathcal{C}^{\infty}(\Omega_+)$
\item[$ii)$] $\mathbf{\bar u}_n= \mathbf{\bar{u}}$ in $B((0,0),n)\cap \Omega _{+}$ and $\mathbf{\bar u}_n =0$ outside $B((0,0),2n)\cap \Omega_+.$
\item[$iii)$] There exists a constant $C(\mathbf{\bar{u}})$ such that, for all $n\in 
\mathbb{N}$,
\end{enumerate}
\begin{equation}
\Vert \mathbf{\bar{u}}-\mathbf{\bar u}_n;L^{\infty }{(\Omega _{+})}\Vert +\Vert
y\nabla (\mathbf{\bar{u}}-\mathbf{\bar u}_n);L^{2}{(\Omega _{+})}\Vert +\Vert y(%
\mathbf{\bar{u}}-\mathbf{\bar u}_n);L^{\infty }(\Omega _{+})\Vert +\Vert
y^{2}\nabla (\mathbf{\bar{u}}-\mathbf{\bar u}_n);L^{\infty }(\Omega _{+})\Vert
\leq C(\mathbf{\bar{u}})~.  \notag
\end{equation}
\end{proposition}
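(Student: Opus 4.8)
The plan is to truncate the \emph{stream function} of $\mathbf{\bar u}$ rather than $\mathbf{\bar u}$ itself, so that the divergence-free constraint (and hence membership in $D$) is preserved automatically. Since $\mathbf{\bar u}=(u,v)$ is an $\alpha$-solution it is smooth in $\Omega_+$ and, by \eqref{eq_decayuv}, satisfies $|u(x,y)|\le C(\mathbf{\bar u})\,y^{-3/2}$ uniformly in $x$. Hence the stream function $\psi:=\Pi[\mathbf{\bar u}]$, given by $\psi(x,y)=-\int_1^y u(x,z)\,dz$, is well defined, smooth, satisfies $\nabla^{\bot}\psi=\mathbf{\bar u}$, and is \emph{globally bounded}: $\Vert\psi;L^\infty(\Omega_+)\Vert\le C(\mathbf{\bar u})\int_1^\infty z^{-3/2}\,dz<\infty$. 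This boundedness of $\psi$, a direct consequence of the integrability of the $y^{-3/2}$ decay, is the structural fact that makes all the estimates below close. I then fix $\chi_n(x,y):=\zeta(|(x,y)|/n-1)$ with $\zeta$ as in \eqref{zeta}, so that $\chi_n\equiv1$ on $B((0,0),n)$, $\chi_n\equiv0$ outside $B((0,0),2n)$, $\mathrm{Supp}(\nabla\chi_n)\subset A_n:=\{n<|(x,y)|<2n\}$, and $|\nabla\chi_n|\le C/n$, $|\nabla^2\chi_n|\le C/n^2$, and define $\mathbf{\bar u}_n:=\nabla^{\bot}(\chi_n\psi)$.

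With this definition properties $i)$--$ii)$ are immediate: $\mathbf{\bar u}_n$ is smooth and divergence-free; where $\chi_n\equiv1$ one has $\mathbf{\bar u}_n=\nabla^{\bot}\psi=\mathbf{\bar u}$, and where $\chi_n\equiv0$ one has $\mathbf{\bar u}_n=0$. Since $u$ and $v$ vanish on $\partial\Omega_+$ one checks that $\psi|_{y=1}=\partial_y\psi|_{y=1}=0$, so $\mathbf{\bar u}_n$ has vanishing trace on $\partial\Omega_+$ and compact support, whence $\mathbf{\bar u}_n\in D$. For the quantitative bound $iii)$ I would expand
\begin{equation*}
\mathbf{\bar u}-\mathbf{\bar u}_n=(1-\chi_n)\mathbf{\bar u}-\psi\,\nabla^{\bot}\chi_n,
\end{equation*}
and, taking one more derivative,
\begin{equation*}
\nabla(\mathbf{\bar u}-\mathbf{\bar u}_n)=(1-\chi_n)\nabla\mathbf{\bar u}-\mathbf{\bar u}\otimes\nabla\chi_n-\nabla^{\bot}\chi_n\otimes\nabla\psi-\psi\,\nabla\nabla^{\bot}\chi_n .
\end{equation*}
Here $|\nabla\psi|=|\mathbf{\bar u}|$, so the terms carrying the factor $(1-\chi_n)$ are controlled globally, while the remaining terms are supported in $A_n$, where crucially $1\le y\le|(x,y)|\le 2n$.

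The $L^\infty$-type bounds are then routine: $|(1-\chi_n)\mathbf{\bar u}|\le|\mathbf{\bar u}|\le C$ and $y|(1-\chi_n)\mathbf{\bar u}|\le Cy^{-1/2}\le C$ by \eqref{eq_decayuv}, and likewise $y^2|(1-\chi_n)\nabla\mathbf{\bar u}|\le Cy^{-1/2}\le C$ by \eqref{eq_decaydudv}. On the annulus, using $|\psi|\le C$, $|\nabla\chi_n|\le C/n$ and $y\le 2n$, the terms $\psi\nabla^{\bot}\chi_n$, $\mathbf{\bar u}\otimes\nabla\chi_n$ and $\nabla^{\bot}\chi_n\otimes\nabla\psi$ all carry a factor $1/n$ that beats the weight $y\le 2n$ (giving $O(n^{-1/2})$ or $O(1)$), while the borderline term $y^2\psi\,\nabla\nabla^{\bot}\chi_n$ is bounded by $(2n)^2\cdot C\cdot C/n^2=C$. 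For the weighted $L^2$ term I would split $\Vert y\nabla(\mathbf{\bar u}-\mathbf{\bar u}_n);L^2\Vert$ accordingly: the $(1-\chi_n)$ part is dominated by $\Vert y\nabla\mathbf{\bar u};L^2(\Omega_+)\Vert<\infty$, and each annular contribution is estimated using that the Lebesgue measure of $A_n$ is at most $Cn^2$. The only borderline computation is again the second-derivative term, $\int_{A_n}|y\psi\nabla^2\chi_n|^2\le C\,n^{-4}\int_{A_n}y^2\le C\,n^{-4}\cdot n^4=O(1)$, whereas $\int_{A_n}|y\,\mathbf{\bar u}\otimes\nabla\chi_n|^2\le C\,n^{-2}\int_{A_n}y^{-1}\le C\,n^{-1}\log n\to0$.

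The main obstacle is precisely these borderline annular estimates for the weighted norms $\Vert y\nabla(\cdot);L^2\Vert$ and $\Vert y^2\nabla(\cdot);L^\infty\Vert$: the factors $y^2$ and $\nabla^2\chi_n\sim n^{-2}$, combined with $|A_n|\sim n^2$ and the height $y\le 2n$, sit exactly at the threshold of boundedness. They close only because $\psi$ is globally bounded (integrability of the $y^{-3/2}$ decay) and because on $\mathrm{Supp}(\nabla\chi_n)$ one has the pointwise bound $y\le|(x,y)|\le2n$. All remaining terms either decay in $n$ or are controlled by the global bounds $\mathbf{\bar u}\in L^2(\Omega_+)$, $y\nabla\mathbf{\bar u}\in L^2(\Omega_+)$ together with \eqref{eq_decayuv}--\eqref{eq_decaydudv}, yielding a constant $C(\mathbf{\bar u})$ uniform in $n$.
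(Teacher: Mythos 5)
Your proposal is correct and follows essentially the same route as the paper: truncate the stream function $\psi=\Pi[\mathbf{\bar u}]$ with the cutoff $\zeta(|(x,y)|/n-1)$, use the global boundedness of $\psi$ coming from the integrable $y^{-3/2}$ decay of $\mathbf{\bar u}$, and close the weighted estimates via the scaling bounds $|\nabla\chi_n|\leq C/n$, $|\nabla^2\chi_n|\leq C/n^2$ together with $y\leq |(x,y)|\leq 2n$ on the annulus. Your borderline computations for $\Vert y^2\nabla(\cdot);L^\infty\Vert$ and $\Vert y\nabla(\cdot);L^2\Vert$ are exactly the ones the paper records as $|y^2\nabla^2\zeta_n|\leq C_\zeta$ and $\Vert y\nabla^2\zeta_n;L^2\Vert\leq C_\zeta$.
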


\begin{proof}
Let $\alpha >3$ and $\mathbf{\bar{u}}=(u,v)$ be an $\alpha $-solution and
let $\psi =\Pi \lbrack \mathbf{\bar{u}}]$ be the corresponding stream-function.
{\bf Proposition \ref{prop_decayB}} implies that $\mathbf{\bar{u}}\in
H_{0}^{1}(\Omega _{+})$ and we have the bounds (\ref{eq_decayuv}), (\ref%
{eq_decaydudv}). Therefore we have not only that $\psi \in \mathcal{C}%
^{1}(\Omega _{+})$ and that $\nabla ^{\bot }\psi (x,y)=\mathbf{\bar{u}}(x,y)$%
, but also that $\psi \in L^{\infty }(\Omega _{+})$, and that, for all $%
(x,y)\in \Omega _{+}$,%
\begin{equation}
|\psi (x,y)|\leq C\Vert \hat{u};\mathcal{B}_{\alpha ,1/2,0}\Vert ~,
\label{decaypsi}
\end{equation}%
with the previous notations, and, since $\mathbf{\bar{u}}$ is smooth in $%
\Omega _{+}$, the stream-function $\psi $ is also smooth in $\Omega _{+}$. Let {%
\begin{equation*}
\zeta _{n}(x,y)=\zeta \left( \dfrac{|(x,y)|}{n}-1\right) ~,\quad \text{and}%
\quad \mathbf{\bar u}_n=\nabla ^{\bot }\left[ \zeta _{n}\mathbf{\bar{u}}\right]
~.
\end{equation*}%
} Then, $\mathbf{\bar u}_n\in {D} $ and it satisfies \emph{i)} and \emph{ii)}
for any $n\in \mathbb{N}$, and 
\begin{equation*}
\mathbf{\bar u}_n-\mathbf{\bar{u}}=(\zeta _{n}-1)\mathbf{\bar{u}}+\psi \nabla
^{\bot }\zeta _{n}~.
\end{equation*}%
Using a scaling argument, one shows that $\Vert \nabla \zeta
_{n};L^{2}(\Omega _{+})\Vert $ is uniformly bounded for $n\in \mathbb{N}$,
and therefore we have the uniform bound, 
\begin{equation*}
\Vert \mathbf{\bar u}_n-\mathbf{\bar{u}};{L^{2}(\Omega _{+})}\Vert \leq \Vert 
\mathbf{\bar{u}};{L^{2}(\Omega _{+})}\Vert +C_{\zeta }\Vert \psi ;{L^{\infty
}(\Omega _{+})}\Vert ~.
\end{equation*}%
Similarly, $\Vert y\nabla \zeta _{n};L^{\infty }(\Omega _{+})\Vert $ is
uniformly bounded for $n\in \mathbb{N}$, and as a consequence we have for
all $\,(x,y)\in \Omega _{+}$%
\begin{equation}
|y(\mathbf{\bar u}_n-\mathbf{\bar{u}})(x,y)|\leq |y\mathbf{\bar{u}}%
(x,y)|+C_{\zeta }|\psi (x,y)|~.  \label{eq1}
\end{equation}%
Using \eqref{eq_decayuv} and \eqref{decaypsi}, we find that the right-hand side
in (\ref{eq1}) is uniformly bounded for $(x,y,n)\in \Omega _{+}\times 
\mathbb{N}$.

\medskip

For the derivatives of $\mathbf{\bar u}_n$ and $\mathbf{\bar{u}}$, we have%
\begin{equation}
|\nabla \mathbf{\bar u}_n(x,y)-\nabla \mathbf{\bar{u}}(x,y)|\leq |\nabla 
\mathbf{\bar{u}}(x,y)|+C~|\nabla \zeta _{n}(x,y)|~|\mathbf{\bar{u}}%
(x,y)|+|\nabla ^{2}\zeta _{n}(x,y)|~|\psi (x,y)|~.  \label{eq3}
\end{equation}%
Applying scaling techniques as above, one obtains for $(x,y,n)\in \Omega
_{+}\times \mathbb{N}$, 
\begin{equation}
\Vert y\nabla ^{2}\zeta _{n};L^{2}(\Omega _{+})\Vert \leq {C_{\zeta }}%
~,\quad \quad |y^{2}\nabla ^{2}\zeta _{n}(x,y)|\leq C_{\zeta }~.  \label{eq2}
\end{equation}%
From (\ref{eq2}) and (\ref{eq3}) and 
\eqref{eq_decayuv} we get
\begin{multline*}
\Vert y\nabla (\mathbf{\bar u}_n-\mathbf{\bar{u})};{L^{2}(\Omega _{+})}\Vert  \\%
[6pt]
\leq \Vert y\nabla \mathbf{\bar{u}};{L^{2}(\Omega _{+})}\Vert +C~\Vert
\nabla \zeta _{n};L^{2}(\Omega _{+})\Vert ~\Vert y\mathbf{\bar{u}};{%
L^{\infty }(\Omega _{+})}\Vert +\Vert y\nabla ^{2}\zeta _{n};{L^{2}(\Omega
_{+})}\Vert ~\Vert \psi ;{L^{\infty }(\Omega _{+})}\Vert ~,
\end{multline*}%
which yields a uniform bound with respect to $n$. Finally, we have, 
\begin{equation*}
y^{2}|\nabla \mathbf{\bar u}_n(x,y)-\nabla \mathbf{\bar{u}}(x,y)|\leq
|y^{2}\nabla \mathbf{\bar{u}}(x,y)|+C~|y\nabla \zeta _{n}(x,y)|~|y\mathbf{%
\bar{u}}(x,y)|+|y^{2}\nabla ^{2}\zeta _{n}(x,y)|~|\psi (x,y)|~.
\end{equation*}%
Therefore, the previous pointwise bound \eqref{eq_decaydudv} on $\nabla 
\mathbf{\bar{u}}$ implies that $\Vert y^{2}(\nabla \mathbf{\bar{u}}-\nabla \mathbf{\bar u}_n);L^{\infty
}(\Omega _{+})\Vert $ is finite and remains uniformly bounded for $n\in 
\mathbb{N}$. This completes the proof of {\bf Proposition \ref{prop_approxalphasol}}.
\end{proof}

Combining {\bf Proposition \ref{prop_contb}} and {\bf Proposition \ref%
{prop_approxalphasol}} we are now able to prove \eqref{H1}. Indeed, let $%
\mathbf{\tilde{u}}\in D$ be a weak solution for $\widetilde{\mathbf{f}}$ and let $%
\mathbf{\bar{u}}$ be the corresponding $\alpha $-solution. From {\bf Proposition %
\ref{prop_approxalphasol}} we get that 
there exists a sequence $\mathbf{\bar u}_n\in {D}^{\mathbb{N}}$ which
approximates $\mathbf{\bar{u}}$, and, since $\mathbf{\bar u}_n$ has bounded support, 
equation (\ref{eq_wfv}) is satisfied by $\mathbf{\bar u}_n$. Using the bounds satisfied by $\mathbf{\tilde{u}}$ and $%
\mathbf{\bar{u}}$ we get
\begin{equation*}
\left\vert \int_{\Omega _{+}}\nabla \mathbf{\tilde{u}}:(\nabla \mathbf{\bar{u%
}}-\nabla \mathbf{\bar u}_n)\right\vert ~d\mathbf{x}\leq C(\mathbf{\bar{u}}%
)\left( \int_{\Omega _{+}\setminus {B(}(0,0),n)}|\nabla \mathbf{\tilde{u}}%
|^{2}~d\mathbf{x}\right) ^{\frac{1}{2}}~,
\end{equation*}%
and therefore 
\begin{equation*}
\lim_{n\rightarrow \infty }\int_{\Omega _{+}}\nabla \mathbf{\tilde{u}}%
:\nabla \mathbf{\bar u}_n~d\mathbf{x}=\int_{\Omega _{+}}\nabla \mathbf{\tilde{u}%
}:\nabla \mathbf{\bar{u}}~d\mathbf{x}~.
\end{equation*}%
{
To bound the trilinear form, we now apply (\ref{eq_contb1}) and get%
\begin{eqnarray*}
&&\left\vert \int_{\Omega _{+}}(\mathbf{{v}}+\mathbf{e}_{1})\cdot
\nabla \mathbf{w}\cdot \left( \mathbf{\bar u}-\mathbf{\bar u}_n\right) ~d%
\mathbf{x}\right\vert  \\
&\leq &C~\left( 1+\Vert \mathbf{{v}};D\Vert \right) ~\Vert \nabla 
\mathbf{{w}};{L^{2}(\Omega _{+}\setminus {B(}(0,0),n))}\Vert ~\left(
\Vert \mathbf{\bar{u}}-\mathbf{\bar u}_n;L^{2}(\Omega _{+})\Vert +\Vert y\left( 
\mathbf{\bar{u}}-\mathbf{\bar u}_n\right) ;{L^{\infty }(\Omega _{+})}\Vert
\right)  \\
&\leq &C(\mathbf{\bar{u}})~\left( 1+\Vert \mathbf{v};D\Vert \right)
~\Vert \nabla \mathbf{w};{L^{2}(\Omega _{+}\setminus {B(}(0,0),n))}%
\Vert ~.
\end{eqnarray*}%
\newline
Passing to the limit in $n$, we get 
\begin{equation*}
\lim_{n\rightarrow \infty }\int_{\Omega _{+}}(\mathbf{v}+\mathbf{e}%
_{1})\cdot \nabla \mathbf{w}\cdot \mathbf{\bar u}_n~d\mathbf{x}%
=\int_{\Omega _{+}}(\mathbf{v}+\mathbf{e}_{1})\cdot \nabla \mathbf{%
w}\cdot \mathbf{\bar{u}}~d\mathbf{x}~.
\end{equation*}%
}
Finally, using that $\widetilde{\mathbf{f}}$ has compact support, we get, for $n$
sufficiently large, 
\begin{equation*}
\int_{\Omega _{+}}\widetilde{\mathbf{f}}\cdot \mathbf{\bar u}_n~d\mathbf{x}=\int_{\Omega
_{+}}\widetilde{\mathbf{f}}\cdot \mathbf{\bar{u}}~d\mathbf{x}~.
\end{equation*}%
Passing to the limit in (\ref{eq_wfv}) with $\mathbf{\bar u}_n$ we obtain (\ref%
{eq_wfv}) with $\mathbf{\bar{u}}$. This completes the proof of \eqref{H1}.

\subsubsection{Proof of (H3)}

\label{sec_H3}

With arguments similar to the 
ones in the previous subsection we show that we have for any $(\mathbf{v,w})\in D^{2}$
and $\mathbf{\bar{u}}$ an $\alpha $-solution, 
\begin{equation*}
\lim_{n\rightarrow \infty }\int_{\Omega _{+}}(\mathbf{v}+\mathbf{e}%
_{1})\cdot \nabla \mathbf{w}\cdot \mathbf{\bar u}_n~d\mathbf{x}=\int_{\Omega
_{+}}(\mathbf{v}+\mathbf{e}_{1})\cdot \nabla \mathbf{w}\cdot \mathbf{\bar{u}}%
~d\mathbf{x}~.
\end{equation*}%
From (\ref{eq_contb2}) we get 
\begin{multline*}
\left\vert \int_{\Omega _{+}}(\mathbf{v}+\mathbf{e}_{1})\cdot \nabla (%
\mathbf{\bar{u}}-\mathbf{\bar u}_n)\cdot \mathbf{w}~d\mathbf{x}\right\vert  \\
\begin{array}{l}
\leq C\left( 1+\Vert \mathbf{v};D\Vert \right) \Vert \frac{\mathbf{w}}{y};{%
L^{2}(\Omega _{+}\setminus {B(}(0,0),n))}\Vert \Big[\Vert y^{2}\nabla (%
\mathbf{\bar u}_n-\mathbf{\bar{u}});{L^{\infty }(\Omega _{+})}\Vert +\Vert
y\nabla (\mathbf{\bar u}_n-\mathbf{\bar{u}});{L^{2}(\Omega _{+})}\Vert \Big] \\%
[8pt]
\leq C(\mathbf{\bar{u}})~\left( 1+\Vert \mathbf{v};D\Vert \right) ~\Vert 
\mathbf{w}/y;{L^{2}(\Omega _{+}\setminus {B(}(0,0),n))}\Vert ~.%
\end{array}%
\end{multline*}%
The Hardy inequality implies that $\mathbf{w}/y\in L^{2}(\Omega _{+})$.
Consequently, we have the following limit, 
\begin{equation*}
\lim_{n\rightarrow \infty }\int_{\Omega _{+}}(\mathbf{v}+\mathbf{e}%
_{1})\cdot \nabla \mathbf{\bar u}_n\cdot \mathbf{w}~d\mathbf{x}=\int_{\Omega
_{+}}(\mathbf{v}+\mathbf{e}_{1})\cdot \nabla \mathbf{\bar{u}}\cdot \mathbf{w}%
~d\mathbf{x}~.
\end{equation*}%
Since the approximation $\mathbf{\bar u}_n$ has compact support, for any fixed $n\in \mathbb{N}
$, we have 
\begin{equation*}
\int_{\Omega _{+}}(\mathbf{v}+\mathbf{e}_{1})\cdot \nabla \mathbf{w}\cdot 
\mathbf{\bar u}_n~d\mathbf{x}=-\int_{\Omega _{+}}(\mathbf{v}+\mathbf{e}%
_{1})\cdot \nabla \mathbf{\bar u}_n\cdot \mathbf{w}~d\mathbf{x}~.
\end{equation*}%
Therefore, the same identity is true for $\mathbf{w}$. This proves (\ref{H3}).

\section{Uniqueness of solutions for Problem 1}

To conclude the paper, we sketch the proof that weak solutions for small obstacles are also
unique. {We note that this result is not included {\bf Theorem \ref{thm_weakstrongunique}}. First, this {previous } theorem applies
to $\mathbf{\tilde{u}}_{\varepsilon }$. Hence, it gives information on $%
\mathbf{u}_{\varepsilon }$ only far from $S_{\varepsilon }$ where $\mathbf{%
\tilde{u}}_{\varepsilon }$ coincides with $\mathbf{u}_{\varepsilon }${.}
Second, the "unique" $\alpha $-solution ${\mathbf{\bar{u}}}_{\varepsilon }$,
to which ${\mathbf{\tilde{u}}}_{\varepsilon }$ is compared, depends itself
on the source term obtained from $\mathbf{u}_{\varepsilon }$ in the
truncation procedure. However another weak solution to Problem 1 could
create another source term.} Our final result is

\begin{theorem}
There exists $\varepsilon ^{u}>0$, such that, for all $\varepsilon
<\varepsilon ^{u}$, {if $\mathbf{u}$ is a weak solution to Problem 1 for $%
S_{\varepsilon }$ then $\mathbf{u}=\mathbf{u}_{\varepsilon }$.}
\end{theorem}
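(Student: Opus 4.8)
The plan is to run the weak--strong uniqueness scheme of {\bf Theorem \ref{thm_weakstrongunique}} directly at the level of Problem~1, now using the constructed solution $\mathbf{u}_{\varepsilon }$ as the ``strong'' solution. Indeed, by {\bf Theorem \ref{mainexistencetheorem}} together with the decay bounds \eqref{eq_decayuv}--\eqref{eq_decaydudv} for $\alpha$-solutions, $\mathbf{u}_{\varepsilon }$ coincides with $\bar{\mathbf{u}}_{\varepsilon }$ outside $B(2h/3)$ and hence satisfies $|\mathbf{u}_{\varepsilon }|\leq C_{\varepsilon }y^{-3/2}$ and $|\nabla \mathbf{u}_{\varepsilon }|\leq C_{\varepsilon }y^{-5/2}$ there, while inside $B(2h/3)\setminus \overline{S_{\varepsilon }}$ it is smooth with $\Vert \nabla \mathbf{u}_{\varepsilon };L^{\infty }\Vert \leq C\Vert \mathbf{u}_{\varepsilon };D\Vert $ by the elliptic estimates of {\bf Lemma \ref{lem_bootstrap}}. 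Given an arbitrary weak solution $\mathbf{u}$ of Problem~1 for $S_{\varepsilon }$, with force $\mathbf{\Sigma }$, I set $\mathbf{v}=\mathbf{u}-\mathbf{u}_{\varepsilon }\in D_{0}^{\varepsilon }$; the goal is to show $\Vert \mathbf{v};D\Vert =0$.

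First I would test the weak formulation \eqref{eq_wf} of $\mathbf{u}$ with $\mathbf{u}_{\varepsilon }$ and that of $\mathbf{u}_{\varepsilon }$ with $\mathbf{u}$. Since $\Gamma (\mathbf{u})=\Gamma (\mathbf{u}_{\varepsilon })=-\mathbf{e}_{1}$, these should read
\begin{equation*}
\int_{\Omega _{+}}\nabla \mathbf{u}:\nabla \mathbf{u}_{\varepsilon }+\int_{\Omega _{+}}[(\mathbf{u}+\mathbf{e}_{1})\cdot \nabla \mathbf{u}]\cdot \mathbf{u}_{\varepsilon }=\mathbf{\Sigma }\cdot \mathbf{e}_{1},\qquad \int_{\Omega _{+}}\nabla \mathbf{u}_{\varepsilon }:\nabla \mathbf{u}+\int_{\Omega _{+}}[(\mathbf{u}_{\varepsilon }+\mathbf{e}_{1})\cdot \nabla \mathbf{u}_{\varepsilon }]\cdot \mathbf{u}=\mathbf{\Sigma }_{\varepsilon }\cdot \mathbf{e}_{1}.
\end{equation*}
Neither $\mathbf{u}$ nor $\mathbf{u}_{\varepsilon }$ is an admissible (compactly supported) test-function, so these identities must be justified by approximation. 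The second is obtained by extending the weak formulation of $\mathbf{u}_{\varepsilon }$ to all of $D^{\varepsilon }$, which is legitimate because the decay of $\mathbf{u}_{\varepsilon }$ makes $\mathbf{w}\mapsto \int [(\mathbf{u}_{\varepsilon }+\mathbf{e}_{1})\cdot \nabla \mathbf{u}_{\varepsilon }]\cdot \mathbf{w}$ continuous on $D$ by \eqref{eq_contb1} (the near-obstacle contribution being controlled by {\bf Lemma \ref{lem_bootstrap}} and Hardy's inequality). The first is the analogue of assumption \eqref{H1}: using the compactly supported approximations $\mathbf{u}_{\varepsilon ,n}$ of {\bf Proposition \ref{prop_approxalphasol}} (which leave $\mathbf{u}_{\varepsilon }$ unchanged near the obstacle, so that $\mathbf{u}_{\varepsilon ,n}=-\mathbf{e}_{1}$ on $S_{\varepsilon }$) and passing to the limit as in Section~\ref{sec_H1}, the antisymmetry \eqref{eq_trilinearsym} lets me rewrite the trilinear term as $-\int [(\mathbf{u}+\mathbf{e}_{1})\cdot \nabla \mathbf{u}_{\varepsilon }]\cdot \mathbf{u}$, which is finite by \eqref{eq_contb2}.

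Subtracting twice the cross term $\int_{\Omega _{+}}\nabla \mathbf{u}:\nabla \mathbf{u}_{\varepsilon }$ from $\Vert \mathbf{u};D\Vert ^{2}+\Vert \mathbf{u}_{\varepsilon };D\Vert ^{2}$ and inserting the two identities above, the Dirichlet energies cancel against $\mathbf{\Sigma }\cdot \mathbf{e}_{1}$ and $\mathbf{\Sigma }_{\varepsilon }\cdot \mathbf{e}_{1}$ up to the two energy defects $\Vert \mathbf{u};D\Vert ^{2}-\mathbf{\Sigma }\cdot \mathbf{e}_{1}\leq 0$ and $\Vert \mathbf{u}_{\varepsilon };D\Vert ^{2}-\mathbf{\Sigma }_{\varepsilon }\cdot \mathbf{e}_{1}\leq 0$ supplied by the energy inequality \eqref{eq_NRJ}. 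After applying \eqref{eq_trilinearsym} once more and the vanishing $\int _{\Omega _{+}}[\mathbf{v}\cdot \nabla \mathbf{u}_{\varepsilon }]\cdot \mathbf{u}_{\varepsilon }=\tfrac{1}{2}\int_{\Omega _{+}}\mathbf{v}\cdot \nabla |\mathbf{u}_{\varepsilon }|^{2}=0$ (legitimate since $\mathbf{v}$ is divergence free and $\mathbf{u}_{\varepsilon }$ decays), all cubic and non-decaying contributions disappear and I expect to be left with
\begin{equation*}
\Vert \mathbf{v};D\Vert ^{2}\leq -\int_{\Omega _{+}}[\mathbf{v}\cdot \nabla \mathbf{u}_{\varepsilon }]\cdot \mathbf{v}.
\end{equation*}

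It then remains to bound the right-hand side by a small multiple of $\Vert \mathbf{v};D\Vert ^{2}$. Splitting $\Omega _{+}$ into the far region $\Omega _{+}\setminus B(2h/3)$, where $\mathbf{u}_{\varepsilon }=\bar{\mathbf{u}}_{\varepsilon }$ and \eqref{eq_contb2} (i.e. assumption \eqref{H4}) yields a bound $K\Vert \hat{\mathbf{u}}_{\varepsilon };\mathcal{U}_{\alpha }\Vert \,\Vert \mathbf{v};D\Vert ^{2}$, and the near region $B(2h/3)\setminus \overline{S_{\varepsilon }}$, where $\Vert \nabla \mathbf{u}_{\varepsilon };L^{\infty }\Vert \leq C\Vert \mathbf{u}_{\varepsilon };D\Vert $ and $\Vert \mathbf{v};L^{2}\Vert \leq C(h)\Vert \mathbf{v};D\Vert $ by Hardy's inequality (the near region staying at distance $\geq h/3$ from the wall), I obtain
\begin{equation*}
\left| \int_{\Omega _{+}}[\mathbf{v}\cdot \nabla \mathbf{u}_{\varepsilon }]\cdot \mathbf{v}\right| \leq C\Vert \mathbf{u}_{\varepsilon };D\Vert \,\Vert \mathbf{v};D\Vert ^{2},
\end{equation*}
where {\bf Lemma \ref{lem_petitesse}} was used to pass from $\Vert \hat{\mathbf{u}}_{\varepsilon };\mathcal{U}_{\alpha }\Vert $ to $\Vert \mathbf{u}_{\varepsilon };D\Vert $. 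By {\bf Lemma \ref{lem_cvg0aux}} one has $\Vert \mathbf{u}_{\varepsilon };D\Vert \to 0$ as $\varepsilon \to 0$, so there is $\varepsilon ^{u}>0$ with $C\Vert \mathbf{u}_{\varepsilon };D\Vert <1$ for $\varepsilon <\varepsilon ^{u}$; then $\Vert \mathbf{v};D\Vert ^{2}\leq C\Vert \mathbf{u}_{\varepsilon };D\Vert \,\Vert \mathbf{v};D\Vert ^{2}$ forces $\mathbf{v}=0$, i.e. $\mathbf{u}=\mathbf{u}_{\varepsilon }$. The main obstacle is the rigorous justification of the two integration-by-parts identities of the second paragraph: the general weak solution $\mathbf{u}$ carries no decay information, so every term in which $\mathbf{u}$ (or $\mathbf{v}$) would be differentiated must first be transferred, via the antisymmetry \eqref{eq_trilinearsym} and the approximation of {\bf Proposition \ref{prop_approxalphasol}}, onto the decaying factor $\mathbf{u}_{\varepsilon }$. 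This transfer is exactly what makes the scheme of {\bf Theorem \ref{thm_weakstrongunique}} applicable here, and it is where the decay estimate of {\bf Theorem \ref{mainexistencetheorem}} is indispensable.
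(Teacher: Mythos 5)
Your overall strategy is exactly the paper's: compare the arbitrary weak solution $\mathbf{u}$ with the constructed solution $\mathbf{u}_{\varepsilon }$ (which inherits the decay of the $\alpha $-solution outside $B(2h/3)$), derive $\Vert \mathbf{u}-\mathbf{u}_{\varepsilon };D\Vert ^{2}\leq \bigl\vert \int (\mathbf{u}-\mathbf{u}_{\varepsilon })\cdot \nabla \mathbf{u}_{\varepsilon }\cdot (\mathbf{u}-\mathbf{u}_{\varepsilon })\bigr\vert $ from the two cross-tested weak formulations together with the energy inequalities, and absorb the right-hand side by splitting the trilinear form into a part over $B(2h/3)$ and a part over its complement. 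This is precisely the decomposition $I=I_{int}+I_{ext}$ used in the paper, and your handling of the (H1)--(H4)-type technicalities by approximation and antisymmetry is the intended one.

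There is, however, one step that fails as written: the near-region bound invokes $\Vert \nabla \mathbf{u}_{\varepsilon };L^{\infty }(B(2h/3)\setminus \overline{S_{\varepsilon }})\Vert \leq C\Vert \mathbf{u}_{\varepsilon };D\Vert $ ``by the elliptic estimates of Lemma \ref{lem_bootstrap}''. That lemma only gives such bounds on the sets $\mathcal{A}_{n}=\Delta _{n}\setminus \overline{B(2^{-n}h)}$, which stay at a fixed positive distance from the obstacle, with constants $C_{m,n}$ depending on $n$; it provides nothing uniform up to $\partial S_{\varepsilon }$. Worse, the asserted inequality is false: since $\mathbf{u}_{\varepsilon }=-\mathbf{e}_{1}$ on $\partial S_{\varepsilon }$ while $\Vert \mathbf{u}_{\varepsilon };D\Vert \rightarrow 0$ (Lemma \ref{lem_cvg0aux}) and hence $\mathbf{u}_{\varepsilon }\rightarrow 0$ uniformly on, say, $\partial B(h/2)$ by Lemma \ref{lem_bootstrap}, a gradient bound of size $o(1)$ on a region of diameter $O(h)$ connecting $\partial S_{\varepsilon }$ to $\partial B(h/2)$ would force $\vert \mathbf{u}_{\varepsilon }\vert =o(1)$ on $\partial S_{\varepsilon }$, a contradiction. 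The repair is standard and keeps the constant $\varepsilon $-independent: estimate the interior integral by H\"older as $\Vert \mathbf{v};L^{4}(B(2h/3))\Vert ^{2}\,\Vert \nabla \mathbf{u}_{\varepsilon };L^{2}(B(2h/3))\Vert \leq C(h)\,\Vert \mathbf{v};D\Vert ^{2}\,\Vert \mathbf{u}_{\varepsilon };D\Vert $, where $\Vert \mathbf{v};L^{4}(B(2h/3))\Vert \leq C(h)\Vert \mathbf{v};D\Vert $ follows from Hardy's inequality (the ball lies at distance at least $h/3$ above the wall $\partial \Omega _{+}$) combined with the Sobolev embedding on the fixed ball. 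With this substitution the rest of your argument goes through and reproduces the paper's conclusion.
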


\begin{proof}
{The following proof is very close to the proof of {\bf Theorem \ref{thm_weakstrongunique}}. Hence, we only sketch the
main ideas.} First, we fix $\alpha >3$ and choose ${\varepsilon _{0}^{u}}$
such that, for all $\varepsilon <\varepsilon _{0}^{u}$, any weak solution $%
\mathbf{u}_{\varepsilon }$ is equal to the $\alpha $-solution $\mathbf{\bar{u%
}}_{\varepsilon }$ outside ${B(}2h/3)$. Furthermore, there holds (see {\bf Lemma %
\ref{lem_petitesse}}): 
\begin{equation*}
\Vert \mathbf{\hat{u}}_{\varepsilon };\mathcal{U}_{\alpha }\Vert \leq
C_{\alpha }\Vert \mathbf{u}_{\varepsilon };D\Vert ~.
\end{equation*}%
for some constant $C_{\alpha }$ depending only on $\alpha $. Here, $\mathbf{%
\hat{u}}_{\varepsilon }$ stands once again for the Fourier transform of ${%
\mathbf{\bar{u}_{\varepsilon }}}$ with respect to $x$. Now, let $\varepsilon
<\varepsilon _{0}^{u}$ and let $\mathbf{u}$ be a weak solution of Problem 1
for $S_{\varepsilon }$. {Following the sketch of proof of w{\bf Theorem \ref%
{thm_weakstrongunique}}, we obtain that 
\begin{equation*}
\Vert \mathbf{u}-\mathbf{u}_{\varepsilon };D\Vert ^{2}\leq \int_{\Omega
_{+}}(\mathbf{u}-\mathbf{u}_{\varepsilon })\cdot \nabla \mathbf{u}%
_{\varepsilon }\cdot (\mathbf{u}-\mathbf{u}_{\varepsilon })~d\mathbf{x}~.
\end{equation*}%
The technicalities which arise here are analogous to (H1)--(H4), and are justified by splitting integrals 
as follows:
\begin{equation*}
I(\mathbf{v},\mathbf{w},\mathbf{z}):=\int_{\Omega _{+}}(\mathbf{v}+\mathbf{e}%
_{1})\cdot \nabla \mathbf{w}\cdot \mathbf{z}~d\mathbf{x}=I_{int}(\mathbf{v},%
\mathbf{w},\mathbf{z})+I_{ext}(\mathbf{v},\mathbf{w},\mathbf{z})~,
\end{equation*}%
where 
\begin{equation*}
I_{ext}(\mathbf{v},\mathbf{w},\mathbf{z}):=\int_{\Omega _{+}\setminus {B(}%
2h/3)}(\mathbf{v}+\mathbf{e}_{1})\cdot \nabla \mathbf{w}\cdot \mathbf{z}~d%
\mathbf{x}~,\quad I_{int}(\mathbf{v},\mathbf{w},\mathbf{z}):=\int_{{B(}2h/3)}(%
\mathbf{v}+\mathbf{e}_{1})\cdot \nabla \mathbf{w}\cdot \mathbf{z}~d\mathbf{x}%
~.
\end{equation*}%
Therefore, one  proves, as in sections {\ref{sec_H24} to \ref{sec_H3},}
suitable continuity and antisymmetric properties of the trilinear form $I$, when applied to $\mathbf{u}%
_{\varepsilon }$, and using the fact that in $I_{ext}$ the weak solution $\mathbf{u}_{\varepsilon }
$ coincides with the $\alpha $-solution $\mathbf{\bar{u}}_{\varepsilon }$.}
This yields
\begin{equation*}
\begin{array}{rl}
\Vert \mathbf{u}-\mathbf{u}_{\varepsilon };D\Vert ^{2} & \leq C\Big[\Vert 
\mathbf{\hat{u}}_{\varepsilon };\mathcal{U}_{\alpha }\Vert +\Vert \mathbf{u}%
_{\varepsilon };D\Vert \Big]\Vert \mathbf{u}-\mathbf{u}_{\varepsilon
};D\Vert ^{2} \\[8pt]
& \leq C_{\alpha }\Vert \mathbf{u}_{\varepsilon };D\Vert ~\Vert \mathbf{u}-%
\mathbf{u}_{\varepsilon };D\Vert ^{2}~.%
\end{array}%
\end{equation*}%
According to {\bf Theorem \ref{thm_existencews}}, there exists  $%
\varepsilon _{1}^{u}$ such that, if $\varepsilon <\varepsilon _{1}^{u}$, we
have $\Vert \mathbf{u}_{\varepsilon};D\Vert <1/(2C_{\alpha })$. This completes the proof.
\end{proof}

\bigskip

\paragraph{Acknowledgements.}
The authors would like to thank G. P. Galdi for suggesting the technique
of proof of {\bf Theorem \ref{thm_weakstrongunique}}, and J. Guillod for 
the careful reading of preliminary versions of the paper.


\begin{thebibliography}{10}

\bibitem{Amick91}
C.J. Amick.
\newblock On the asymptotic form of {N}avier-{S}tokes flow past a body in the
  plane.
\newblock {\em Journal of differential equations}, 91:149--167, 1991.

\bibitem{Amrouche&Bonzom09}
C. Amrouche and F. Bonzom.
\newblock Exterior {S}tokes problem in the half-space.
\newblock {\em Ann. Univ. Ferrara Sez. VII Sci. Mat.}, 55(1):37--66, 2009.

\bibitem{Babenko75}
K.~I. Babenko.
\newblock Theory of perturbations of stationary flows of a viscous
  incompressible fluid in the case of small {R}eynolds numbers.
\newblock {\em Dokl. Akad. Nauk SSSR}, 227(3):592--595, 1976.

\bibitem{Christoph}
C. Boeckl and P. Wittwer.
\newblock Decay estimates for steady solutions of the navier-stokes equations
  in two dimensions in the presence of a wall.
\newblock in preparation.

\bibitem{Finn&Smith67}
R. Finn and D.~R. Smith.
\newblock On the stationary solutions of the {N}avier-{S}tokes equations in two
  dimensions.
\newblock {\em Arch. Rational Mech. Anal.}, 25:26--39, 1967.

\bibitem{Fischer&Hsiao&Wendland86}
T.~M. Fischer, G.~C. Hsiao, and W.~L. Wendland.
\newblock On two-dimensional slow viscous flows past obstacles in a half-plane.
\newblock {\em Proc. Roy. Soc. Edinburgh Sect. A}, 104(3-4):205--215, 1986.

\bibitem{Fujita61}
H. Fujita.
\newblock On the existence and regularity of the steady-state solutions of the
  {N}avier-{S}tokes theorem.
\newblock {\em J. Fac. Sci. Univ. Tokyo Sect. I}, 9:59--102 (1961), 1961.


\bibitem{Galdi94}
{\sc Giovanni~P. Galdi}, {\em An introduction to the mathematical theory of the
  {N}avier-{S}tokes equations. {V}ol. {I}}, vol.~38 of Springer Tracts in
  Natural Philosophy, Springer-Verlag, New York, 1994.
\newblock Linearized steady problems.


\bibitem{Galdi98}
G.~P. Galdi.
\newblock {\em An introduction to the mathematical theory of the
  {N}avier-{S}tokes equations. {V}ol. {II}}, volume~39 of {\em Springer Tracts
  in Natural Philosophy}.
\newblock Springer-Verlag, New York, 1994.
\newblock Nonlinear steady problems.


\bibitem{Gilbarg&Weinberger78}
D.~Gilbarg and H.~F. Weinberger.
\newblock Asymptotic properties of steady plane solutions of the
  {N}avier-{S}tokes equations with bounded {D}irichlet integral.
\newblock {\em Ann. Scuola Norm. Sup. Pisa Cl. Sci. (4)}, 5(2):381--404, 1978.

\bibitem{Gilbarg&Trudinger01}
D. Gilbarg and N.~S. Trudinger.
\newblock {\em Elliptic partial differential equations of second order}.
\newblock Classics in Mathematics. Springer-Verlag, Berlin, 2001.
\newblock Reprint of the 1998 edition.

\bibitem{Hillairet07}
M. Hillairet.
\newblock Chute stationnaire d'un solide dans un fluide visqueux incompressible
  le long d'un plan inclin\'e. partie ii.
\newblock {\em Annales de la facult\'e de sciences de Toulouse},
  16(4):867--903, 2007.

\bibitem{Hillairet&Wittwer09}
M. Hillairet and P. Wittwer.
\newblock Existence of stationary solutions of the {N}avier-{S}tokes equations
  in two dimensions in the presence of a wall.
\newblock {\em J. Evol. Equ.}, 9(4):675--706, 2009.

\bibitem{Kavian93}
O.~Kavian.
\newblock {\em Introduction \`a la th\'eorie des points critiques et
  applications aux probl\`emes elliptiques}, volume~13 of {\em Math\'ematiques
  \& Applications (Berlin) [Mathematics \& Applications]}.
\newblock Springer-Verlag, Paris, 1993.

\bibitem{Leray33}
J.~Leray.
\newblock Etude de diverses \'equations int\'egrales non lin\'eaires et de
  quelques probl\`emes de l'hydrodynamique.
\newblock {\em J. Maths Pures Appl.}, 12:1--82, 1933.

\bibitem{Russo10}
A. Russo.
\newblock On the existence of d-solutions of the steady-state navier-stokes
  equations in plane exterior domains.
\newblock arXiv:1101.1243, 2010.

\bibitem{Sazonov03}
L.~I. Sazonov.
\newblock Justification of the asymptotic expansion of the solution of a
  two-dimensional flow problem for small {R}eynolds numbers.
\newblock {\em Izv. Ross. Akad. Nauk Ser. Mat.}, 67(5):125--154, 2003.

\bibitem{Serre87}
D.~Serre.
\newblock Chute libre d'un solide dans un fluide visqueux incompressible.
  {E}xistence.
\newblock {\em Japan J. Appl. Math.}, 4(1):99--110, 1987.

\bibitem{Weinberger73}
H.F. Weinberger.
\newblock On the steady fall of a body in a {N}avier-{S}tokes fluid.
\newblock In {\em Partial differential equations (Proc. Sympos. Pure Math.,
  Vol. XXIII, Univ. California, Berkeley, Calif., 1971)}, pages 421--439. Amer.
  Math. Soc., Providence, R. I., 1973.

\bibitem{Wittwer06}
P.~Wittwer.
\newblock Leading order down-stream asymptotics of stationary {N}avier-{S}tokes
  flows in three dimensions.
\newblock {\em Journal of Mathematical Fluid Mechanics}, 8:147--186, 2006.

\end{thebibliography}
\end{document}